\definecolor{maroon}{rgb}{0.5, 0.0, 0.0}
\definecolor{darkblue}{rgb}{0.0, 0.0, 0.55}
\newcommand{\cmark}{\ding{51}}%
\newcommand{\xmark}{\ding{55}}%
\newcommand{\bigzero}{\mbox{\normalfont\large\bfseries 0}}
\newcommand*\patchAmsMathEnvironmentForLineno[1]{%
 \expandafter\let\csname old#1\expandafter\endcsname\csname #1\endcsname
 \expandafter\let\csname oldend#1\expandafter\endcsname\csname end#1\endcsname
 \renewenvironment{#1}%
    {\linenomath\csname old#1\endcsname}%
    {\csname oldend#1\endcsname\endlinenomath}}%
\newcommand*\patchBothAmsMathEnvironmentsForLineno[1]{%
 \patchAmsMathEnvironmentForLineno{#1}%
 \patchAmsMathEnvironmentForLineno{#1*}}%
\definecolor{brightmaroon}{rgb}{0.76, 0.13, 0.28}
\definecolor{linkblue}{rgb}{0, 0.337, 0.227}
\newcommand{\cay}{\mathop{\mathsf{Cay}}}
\newcommand{\rededge}{\mathord{\,\color{red}\vrule width 12pt height 3pt depth -1.5pt}\,}
\newcommand{\blueedge}{\mathord{\,\color{blue}\vrule width 12pt height 3pt depth -1.5pt}\,}
\newrobustcmd{\onesub}{\mathord{\includegraphics{one-sub}}}
\newrobustcmd{\leftup}{\mathord{\includegraphics{left-up}}}
\newtheorem{case}{Case}
\newcommand{\xMapsto}[2][]{\ext@arrow 0599{\Mapstofill@}{#1}{#2}}
\def\Mapstofill@{\arrowfill@{\Mapstochar\Relbar}\Relbar\Rightarrow}
\title{\MakeUppercase{On Matrix Product Factorization of graphs}}
\author{Farzad Maghsoudi\thanks{Department of Mathematics and Computer Science, University of Lethbridge, Lethbridge, AB, Canada.} \,\,\,\,\,Babak Miraftab\thanks{School of Computer Science, Carleton University, Ottawa, ON, Canada.}  \,\,\,\,\, Sho Suda\thanks{Department of Mathematics, National Defense Academy of Japan, Yokosuka, Kanagawa 239-8686,
Japan}}
\date{}
\begin{document}

\maketitle

\begin{abstract}
In this paper, we explore the concept of the ``matrix product of graphs," initially introduced by Prasad, Sudhakara, Sujatha, and M. Vinay. This operation involves the multiplication of adjacency matrices of two graphs with assigned labels, resulting in a weighted digraph.
Our primary focus is on identifying graphs that can be expressed as the graphical matrix product of two other graphs. Notably, we establish that the only complete graph fitting this framework is $K_{4n+1}$, and moreover the factorization is not unique. 
In addition,
the only complete bipartite graph that can be expressed as the graphical matrix product of two other graphs is $K_{2n,2m}$
Furthermore, we introduce several families of graphs that exhibit such factorization and, conversely, some families that do not admit any factorization such as wheel graphs, friendship graphs, hypercubes and paths.
\end{abstract}

\section{Introduction}
The concept of the ``matrix product of graphs" was introduced in the work by Prasad, Sudhakara, Sujatha, and M. Vinay, in \cite{Manjunatha}. In essence, the matrix product of graphs is an operation that merges two graphs into a new one by performing a matrix multiplication on their adjacency matrices. 
To carry out this operation, we establish labels(orderings) for both graphs, and their adjacency matrices are multiplied using these labels, resulting in a weighted digraph.
In this paper, we focus on pairs of graphs $(G, H)$ for which the matrix product is symmetric, and all edge weights are set to 1. Such a pair is referred to as a graphical pair, as described in \Cref{graphical_pair} with more details.
It is important to note that the matrix product of graphical pairs is commutative. This commutativity arises from the symmetry of the adjacency matrices of both graphs, $G$ and $H$.
$$A(G)A(H)=A(G)^tA(H)^t=(A(H)A(G))^t=A(H)A(G)$$
The matrix product of graphical pairs has recently gained attention, see \cite{Z2, Bhat, Arathi} and in particular see the survey \cite{survey}.
However, the origins of this product trace back to a problem of finding commutating pairs of graphs proposed by Akbari and Herman in \cite{Akbari} and later again by Akbari, Moazami, and Mohammadian in \cite{Akbari2}.
We say that two graphs $G_1$ and $G_2$ with the same vertex set \emph{commute} if their adjacency matrices commute. 
They proved that
\begin{thm}
    Let $n\geq 2$ be an integer. Then the following items hold:
    \begin{itemize}
        \item \cite[Theorem 2]{Akbari2} The complete graph $K_n$ is decomposable into commuting perfect matchings and $2$-regular graphs.
        \item \cite[Theorem 1]{Akbari} The graph $K_{n,n}$ is decomposable into commuting perfect matchings if and only if $n$ is a power of $2$.
    \end{itemize}
\end{thm}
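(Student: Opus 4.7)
The plan is to exploit that Cayley graph adjacency matrices over a common abelian group automatically commute (being simultaneously diagonalizable by the characters of the group), and conversely to translate commutation of matching adjacency matrices in $K_{n,n}$ into a purely group-theoretic condition on permutations.

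For item 1, identify $V(K_n)$ with $\mathbb{Z}_n$ and, for each $k\in\{1,\ldots,\lfloor n/2\rfloor\}$, let $F_k:=\cay(\mathbb{Z}_n,\{k,-k\})$. These Cayley graphs partition $E(K_n)$, since the edge $\{i,j\}$ belongs to $F_k$ with $k=\min(|i-j|,n-|i-j|)$. Generically $|\{k,-k\}|=2$ and $F_k$ is $2$-regular; the sole exception is $n$ even with $k=n/2$, where $F_{n/2}$ is a perfect matching. Since every $F_k$ is a Cayley graph over the abelian group $\mathbb{Z}_n$, its adjacency matrix is a circulant, and any two circulants of the same size commute.

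For item 2, sufficiency is by construction. When $n=2^k$, identify both sides of $K_{n,n}$ with $\Gamma:=(\mathbb{Z}/2\mathbb{Z})^k$ and, for each $g\in\Gamma$, take the matching $M_g$ pairing $u$ on the left with $u+g$ on the right. These partition $E(K_{n,n})$, and in block form their adjacency matrices are $\bigl(\begin{smallmatrix}0 & P_g\\ P_g^T & 0\end{smallmatrix}\bigr)$, where the translation matrices $P_g$ form an abelian group of involutions; a direct $2\times 2$ block calculation then gives commutativity. For necessity, let $M_1,\ldots,M_n$ be commuting perfect matchings that decompose $K_{n,n}$, with adjacency matrices $\bigl(\begin{smallmatrix}0 & P_i\\ P_i^T & 0\end{smallmatrix}\bigr)$. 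Expanding the block product shows $[M_i,M_j]=0$ is equivalent to $P_iP_j^T=P_jP_i^T$. Relabel the right-hand part by $P_1^{-1}$ so that $P_1=I$; this preserves both the decomposition and the commutation identities. Taking $i=1$ in the identity forces $P_j^T=P_j$, i.e.\ each $P_j$ is an involution, after which the identity collapses to $P_iP_j=P_jP_i$. Thus $\{P_1,\ldots,P_n\}\subseteq S_n$ is a set of pairwise commuting involutions. The edge-disjointness of the matchings says that for every fixed $u$ the map $i\mapsto P_i(u)$ is a bijection onto $[n]$, so the subgroup $H\le S_n$ generated by the $P_i$'s acts transitively on $[n]$; abelianness upgrades transitivity to regularity, giving $|H|=n$. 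Since $H$ is abelian of exponent $2$, it is isomorphic to $(\mathbb{Z}/2\mathbb{Z})^\ell$, and $n=2^\ell$.

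The main obstacle is the necessity direction of item 2: one must leverage the commutation of block matrices into a complete abelian $2$-group structure. The critical step is the normalization $P_1=I$, which converts the somewhat opaque relation $P_iP_j^T=P_jP_i^T$ simultaneously into ``$P_j$ is an involution'' and ``the $P_i$ commute''; the regularity of the resulting action is then the geometric consequence of the matchings being edge-disjoint, and the constraint $n=2^\ell$ follows from the classification of elementary abelian $2$-groups.
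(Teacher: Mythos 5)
This theorem is not proved in the paper at all: it is quoted verbatim from the cited sources (\cite[Theorem 2]{Akbari2} and \cite[Theorem 1]{Akbari}), so there is no internal proof to compare yours against. Judged on its own, your argument is correct and self-contained. For item 1, the circulant decomposition $F_k=\cay(\mathbb{Z}_n,\{k,-k\})$, $1\le k\le\lfloor n/2\rfloor$, does partition $E(K_n)$ into $2$-regular graphs plus (for even $n$) the single matching $F_{n/2}$, and all circulants of the same order commute, which is exactly what the statement asks. For item 2, the sufficiency via translation matchings over $(\mathbb{Z}/2\mathbb{Z})^k$ is standard and correct, and your necessity argument is sound: one small point you glossed over is that expanding the block commutator actually yields two identities, $P_iP_j^\top=P_jP_i^\top$ and $P_i^\top P_j=P_j^\top P_i$, but these are equivalent (each says that $P_iP_j^{-1}$, respectively its conjugate inverse $P_i^{-1}P_j$, is an involution), so nothing is lost. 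The normalization $P_1=I$ is legitimate since it amounts to conjugating all $2n\times 2n$ adjacency matrices by a block-diagonal permutation, which preserves both the decomposition and commutativity; after that, taking $i=1$ forces each $P_j$ to be a symmetric permutation matrix, hence an involution, and the relation collapses to pairwise commutation. Edge-disjointness gives transitivity of the abelian group generated by the $P_j$, transitive abelian permutation groups are regular, so this group has order $n$ and, being elementary abelian of exponent $2$, order $2^\ell$; hence $n=2^\ell$. This is a clean reconstruction of the cited results, likely close in spirit to the original papers, but it is your own argument rather than a rederivation of anything in this paper.
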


\noindent For further applications, readers are referred to \cite{commuting, Somodi, de2009necessary}.
Motivated by the decomposition problem of $K_{n,n}$ and $K_n$ into commuting perfect matchings in \cite{Akbari} and \cite{Akbari2}, our research focuses on the matrix product decomposition of a connected graph $X$ on $n$ vertices into two commuting graphs on $n$ vertices. 
More precisely we ask the following question:

\begin{prb}\label{main}
Determine the family $\mathcal{X}$ of all graphs such that the adjacency matrix of every $X$ in $\mathcal X$, denoted as $A(X)$, can be expressed as the product of the adjacency matrices of two other graphs, $G$ and $H$, i.e. $A(X) = A(G) A(H)$.
\end{prb}

\noindent It is worth mentioning that Bosch in \cite{Bosch} shows that every real square matrix $X$ can be expressed as a matrix product of two real symmetric matrices, denoted as $G$ and $H$, see \cite[Theorem 2]{Bosch}. 
However, it is important to note that there is no assurance that $G$ and $H$ correspond to the adjacency matrices of graphs.
Let us define that a graph X is a \emph{matrix product factorizable} if $X$ can be expressed as a matrix
product of adjacency matrices of two graphs, see \Cref{MPF} for a more rigorous definition.
Arathi Bhat,  Sudhakara, and  Vinay in \cite{Bhat} showed that generalized wheel graphs are matrix product factorizable, see \cite[Theorem 2.1, Remark 2.2]{Bhat}.
In this paper, we investigate \Cref{main} by focusing on complete graphs and complete bipartite graphs. For instance, we show that the only complete graph that belongs to $\mathcal{X}$ is $K_{4n+1}$.
 
\begin{restatable}{thm}{completegraphs}
\label{thm:completegraphs}
     The complete graph $ K_{n} $ can be expressed as the matrix product of two graphs if and only if $ n = 4k+1 $.
\end{restatable}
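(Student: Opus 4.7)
The plan is to prove both directions via a common structural reduction. Since $A(G)A(H)=J-I$ is symmetric while each of $A(G),A(H)$ is symmetric, transposition gives $A(G)A(H)=A(H)A(G)$, so the two matrices commute and are simultaneously diagonalizable. The matrix $J-I$ has the simple eigenvalue $n-1$ with eigenvector $\mathbf{1}$, forcing $\mathbf{1}$ to be a common eigenvector, so $A(G)\mathbf{1}=\lambda\mathbf{1}$ and $A(H)\mathbf{1}=\mu\mathbf{1}$ with $\lambda\mu=n-1$; thus $G$ and $H$ are regular. The vanishing of the diagonal of $J-I$ forces $\sum_k A(G)_{ik}A(H)_{ik}=0$ for every $i$, so $G$ and $H$ are edge-disjoint.

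For the forward direction, assuming $n=4k+1$, I propose the Cayley construction on $\mathbb{Z}_n$ with connection sets $S_G=\{\pm 1\}$ and $S_H=\{m\in\mathbb{Z}_n\setminus\{0\}:m\equiv 2,3\pmod 4\}$. Since $n\equiv 1\pmod 4$, negation swaps the residues $2$ and $3$ modulo $4$, so $S_H$ is symmetric; it is disjoint from $S_G$ and has size $2k$, giving $\lambda\mu=2\cdot 2k=n-1$. Adjacency matrices of Cayley graphs on $\mathbb{Z}_n$ are circulants that commute, and their product corresponds to the convolution of connection indicators. A residue-by-residue check modulo $4$ shows that for every $k\in\mathbb{Z}_n$ with $k\neq 0$ exactly one of $k-1,k+1$ lies in $S_H$, while neither does for $k=0$; this gives $\mathbf{1}_{S_G}\ast\mathbf{1}_{S_H}=\mathbf{1}_{\mathbb{Z}_n\setminus\{0\}}$ and hence $A(G)A(H)=J-I$.

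For $n\equiv 3\pmod 4$, I use a mod-$2$ argument. Both $A(G),A(H)$ are alternating over $\mathbb{F}_2$ (symmetric, zero diagonal), hence have even $\mathbb{F}_2$-rank. For $n$ odd, $J-I$ over $\mathbb{F}_2$ has null space exactly $\mathrm{span}(\mathbf{1})$ and therefore rank $n-1$. Since $\mathrm{rank}(A(G)A(H))\le\min\{\mathrm{rank}\,A(G),\mathrm{rank}\,A(H)\}$, both ranks are at least $n-1$; combined with even rank and $n-1$ even, both must equal $n-1$ exactly, with null space $\mathrm{span}(\mathbf{1})$. Thus $A(G)\mathbf{1}\equiv 0\pmod 2$ and $A(H)\mathbf{1}\equiv 0\pmod 2$, i.e.\ $\lambda,\mu$ are both even. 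Then $4\mid\lambda\mu=n-1$, contradicting $n\equiv 3\pmod 4$.

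The hard case is $n$ even. Here $J-I$ is invertible mod $2$, so the previous parity argument is vacuous, and the odd product $\lambda\mu=n-1$ forces both $\lambda,\mu$ to be odd. The subcase $\lambda=1$ is killed at once: if $G=M$ is a perfect matching then $A(M)(J-I)=A(M)J-A(M)=J-A(M)\neq J-I$, since $A(M)\neq I$. For general odd $\lambda,\mu\ge 3$ my plan is to exploit that on $\mathbf{1}^{\perp}$ the relation $A(G)A(H)=-I$ forces every nontrivial eigenvalue $\lambda_i$ of $A(G)$ to be an algebraic-integer unit (its reciprocal must be an eigenvalue of the integer matrix $A(H)$), which together with the resulting identity $|\det A(G)|=\lambda$ and the trace constraints $\sum_{i\ge 2}\lambda_i=-\lambda$, $\sum_{i\ge 2}\lambda_i^2=\lambda(n-\lambda)$, $\sum_{i\ge 2}\lambda_i^{-1}=-\mu$ sharply restrict the admissible spectra of $G$. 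Showing that no $\lambda$-regular graph on $n$ vertices realizes such a spectrum, for every odd factorization $\lambda\mu=n-1$ with $n$ even, is the main technical obstacle; I expect this requires combining the above spectral constraints with the combinatorial design condition that $|N_G(i)\cap N_H(j)|$ equals $1$ for $i\neq j$ and $0$ for $i=j$, to rule out the remaining pairs.
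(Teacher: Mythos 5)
Your reduction (commutativity, simultaneous diagonalizability, $\mathbf{1}$ as a common eigenvector, hence $G,H$ regular with $\lambda\mu=n-1$ and edge-disjoint) is sound, and two of your three cases genuinely work, by routes different from the paper's. Your circulant construction for $n=4k+1$ (cycle $\cay(\mathbb{Z}_n;\{\pm1\})$ against $\cay(\mathbb{Z}_n;S_H)$ with $S_H$ the nonzero residues $\equiv 2,3\pmod 4$) is correct — the residue check goes through, including the two wrap-around values $c=1$ and $c=4k$ — and is arguably cleaner than the paper's diamond-condition case analysis in \cref{thm 12}, though you only sketch the verification. Your $\mathbb{F}_2$ argument for $n\equiv 3\pmod 4$ (alternating matrices have even rank, $J-I$ has rank $n-1$ with kernel $\mathrm{span}(\mathbf{1})$, hence both degrees are even and $4\mid n-1$) is also correct and is a different mechanism from the paper's handshake-parity lemma, which shows more generally that an odd-order graph with all degrees $\equiv 2\pmod 4$ is not MPF (\cref{K_4n+3}).

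The genuine gap is the case $n$ even, which you yourself flag as unresolved: for odd $\lambda,\mu\ge 3$ you only list spectral constraints (unit eigenvalues on $\mathbf{1}^{\perp}$, $|\det A(G)|=\lambda$, trace identities) and "expect" they can be combined with the condition $|N_G(i)\cap N_H(j)|=\delta_{i\ne j}$ to rule everything out; no such argument is given, so the "only if" direction is not proved for even $n$. Even your $\lambda=1$ subcase is argued incorrectly as written: from $A(M)A(H)=J-I$ and $A(M)^2=I$ one gets $A(H)=A(M)(J-I)=J-A(M)$, and the contradiction is that this matrix has all-ones diagonal (so $H$ would have loops), not that it differs from $J-I$. (There is also a sign slip: $\sum_{i\ge2}\lambda_i^{-1}=+\mu$, since $\mu_i=-1/\lambda_i$ and $\sum_{i\ge2}\mu_i=-\mu$.) The paper closes the even case with no spectral work at all, by invoking the known fact (\cref{odd-degree-vertex}, from Manjunatha) that a graph of even order $n>3$ containing a vertex of degree $n-1$ is never MPF, applied to $K_{2m}$; to complete your proof you would need either to prove a statement of that kind or to actually carry out the spectral/design elimination you only outline.
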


\noindent In addition, we show that the only complete bipartite graph that belongs to $\mathcal{X}$ is $K_{2n,2m}$.
\begin{restatable}{thm}{completebipartitegraphs}
\label{thm:completebipartitegraphs}
    The complete bipartite graph $ K_{m,n} $ can be expressed as the matrix product of two graphs if and only if $m$ and $n$ are even.
\end{restatable}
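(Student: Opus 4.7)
For the forward direction, I propose the explicit construction: assuming $m$ and $n$ are both even, let $G$ be the vertex-disjoint union of a perfect matching on the $m$-part and a perfect matching on the $n$-part, and let $H = K_{m,n}$. Ordering the vertices part-by-part, $A(G) = \operatorname{diag}(P_{1}, P_{2})$ where each $P_{i}$ is a symmetric permutation matrix, so $P_{i} J = J$; with $A(H) = \begin{pmatrix} 0 & J \\ J^{T} & 0 \end{pmatrix}$, a direct block multiplication yields $A(G) A(H) = A(K_{m,n})$.

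For the backward direction, assume $A(G) A(H) = A(K_{m,n})$ and write $M$ and $N$ for the two parts of $K_{m,n}$. Since $\operatorname{tr}(A(G) A(H)) = 0$, the graphs $G$ and $H$ are edge-disjoint, and since no row of $A(K_{m,n})$ is zero, no vertex is isolated in either graph. The vanishing of $(A(G) A(H))_{ij}$ whenever $i$ and $j$ lie in the same part (including the diagonal case $i=j$) states that no vertex $k$ is simultaneously a $G$-neighbor of $i$ and an $H$-neighbor of $j$; running this over all such pairs yields the key dichotomy that for every vertex $k$, either $N_{G}(k) \subseteq M$ and $N_{H}(k) \subseteq N$, or $N_{G}(k) \subseteq N$ and $N_{H}(k) \subseteq M$. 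Splitting further by whether $k$ lies in $M$ or $N$, write $V = A \sqcup B \sqcup C \sqcup D$ with $A, B \subseteq M$ and $C, D \subseteq N$; the neighborhood conditions then pin the $G$-edges to lie inside $B$, inside $C$, or between $A$ and $D$, while the $H$-edges must lie inside $A$, inside $D$, or between $B$ and $C$.

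Writing $A(G)$ and $A(H)$ in the induced $4 \times 4$ block form and multiplying, the $(A, C)$-block of $A(G) A(H)$ is the zero matrix, while the $(A, C)$-block of $A(K_{m,n})$ equals $J_{|A|,|C|}$; this forces $|A| \cdot |C| = 0$, and symmetrically the $(B, D)$-block forces $|B| \cdot |D| = 0$. Up to swapping the roles of $G$ and $H$, the only surviving configuration is $A = D = \emptyset$, in which case $G = G[M] \sqcup G[N]$ and $H$ is bipartite with some biadjacency matrix $Y$, and the factorization equation decouples into $A(G[M]) \, Y = J_{m,n}$ and $Y \, A(G[N]) = J_{m,n}$. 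Evaluating $A(G[M]) \, Y \, A(G[N])$ in the two possible orders yields $d_{G[M]} \mathbf{1}_{n}^{T} = \mathbf{1}_{m} d_{G[N]}^{T}$, which forces $G[M]$ and $G[N]$ to be $\alpha$-regular for a common positive integer $\alpha$. Left-multiplying $A(G[M]) Y = J_{m,n}$ by $\mathbf{1}_{m}^{T}$ produces $\alpha \mathbf{1}_{m}^{T} Y = m \mathbf{1}_{n}^{T}$, so each column sum of $Y$ equals $m/\alpha$, whence $\alpha \mid m$. Since the handshake lemma requires $\alpha m$ to be even, $m$ odd would force $\alpha$ to be even, contradicting $\alpha \mid m$. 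Hence $m$ is even, and the identical argument gives $n$ even.

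The principal obstacle is the four-block structural step: establishing the two-type dichotomy for each vertex and then showing that the ``opposite'' blocks of the product must vanish requires a careful tracking of the zero-block constraints of $A(K_{m,n})$. Once that reduction is in hand, the $\alpha$-regularity and divisibility argument that delivers the parity contradiction is short.
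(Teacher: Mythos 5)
Your proof is correct. The sufficiency half is essentially the paper's own construction (\cref{K_mn-even}): a disjoint union of perfect matchings on the two parts times $K_{m,n}$ itself, using $PJ=J$. The necessity half, however, takes a genuinely different route from \cref{K_mn-odd}. The paper argues locally at a single vertex $x$ of one part: after a block-form reduction it shows that the neighbourhood of $x$ in one factor lies entirely inside one part, extracts a principal block $B$ of the other factor satisfying $B\mathbf{1}=\mathbf{1}$ (hence a perfect matching, of even order), and contradicts the fact that this cross-degree divides the odd part size via the degree identity of \cref{thm 4}. You instead prove a global structure theorem: the vanishing blocks of $A(K_{m,n})$ plus the absence of isolated vertices force every vertex into one of two types, the induced four-block analysis kills the ``mixed'' configurations, and (up to swapping factors) one factor decomposes as $G[M]\sqcup G[N]$ while the other is bipartite across the parts; associativity then gives common regularity $\alpha$, the equations $A(G[M])Y=J$ and $YA(G[N])=J$ give $\alpha\mid m$ and $\alpha\mid n$, and the handshake lemma yields the parity contradiction. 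What each buys: the paper's argument is shorter and needs only one vertex's neighbourhood, at the cost of some delicate permutation/block bookkeeping; yours is longer but conceptually cleaner, is self-contained (it does not invoke \cref{thm 4}), and as a by-product describes all factorizations of $K_{m,n}$ — one factor must be a disjoint union of $\alpha$-regular graphs on the two parts and the other a bipartite graph between them — which is strictly more information than non-existence for odd $m$ or $n$.
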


\noindent Furthermore, we explore various other graph families, such as wheel graphs, paths, circulant graphs, and others, to determine whether they have matrix product factorization properties.
Last but not least,  we direct readers to the \cref{summary-table}, for a list of our results.


\section{Preliminaries}
For terminology and notation not found in this paper, we refer readers to~\cite{godsil01}. 
A \emph{weighted digraph} is a digraph in which a number (the weight) is assigned to each arc (edge).
A simple graph is a graph without multiple edges.

\noindent Now, we define the matrix product of two graphs in the most generalized manner.
\begin{defn}
    Let $ G $ and $ H $ be two graphs on $ n $ vertices, with labelings (bijection) $ g\colon V(G) \rightarrow \{1,2,\ldots,n\} $, and $ h\colon V(H) \rightarrow \{1,2,\ldots,n\} $. The \emph{matrix product} of $G$ and $H$ with respect to $g$ and $h$ denoted by $ G {}_{g}\cdot{}_{h} H $ is a weighted digraph with  the adjacency matrix $ A(G)A(H) $.
\end{defn}

\noindent It is worth mentioning that the labelings of $G$ and $H$ play an important role here. 
In the following example, we illustrate that altering the labels results in distinct digraphs.

\begin{exa}
Let $ G $ and $ H $ be graphs on $ 6 $ vertices depicted in \cref{ex22}.
\begin{figure}[H]
    \centering
\includegraphics[scale=1.1]{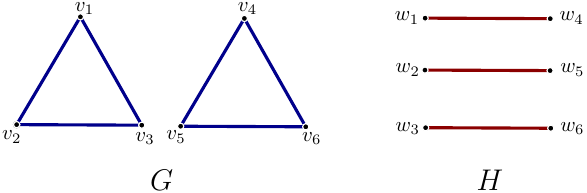}
    \caption{$ G $ and $ H $}
    \label{ex22}
\end{figure}

\noindent Define the following labelings:
\[
g\coloneqq \begin{pmatrix} 
	v_1 & v_2 & v_3 & v_4 & v_5 & v_6 \\
	1 & 2 & 3 & 4 & 5 & 6 \\
	\end{pmatrix} , \textit{ and }  h\coloneqq \begin{pmatrix} 
	w_1 & w_2 & w_3 & w_4 & w_5 & w_6 \\
	1 & 2 & 3 & 4 & 5 & 6 \\
	\end{pmatrix}.
 \] Then, we have
\begin{align*}
    A(G)A(H) = \begin{bmatrix} 
	0 & 1 & 1 & 0 & 0 & 0 \\
	1 & 0 & 1 & 0 & 0 & 0 \\
	1 & 1 & 0 & 0 & 0 & 0 \\
	0 & 0 & 0 & 0 & 1 & 1 \\
    0 & 0 & 0 & 1 & 0 & 1 \\
	0 & 0 & 0 & 1 & 1 & 0 \\
	\end{bmatrix} \begin{bmatrix} 
	0 & 0 & 0 & 1 & 0 & 0\\
	0 & 0 & 0 & 0 & 1 & 0\\
	0 & 0 & 0 & 0 & 0 & 1\\
	1 & 0 & 0 & 0 & 0 & 0\\
    0 & 1 & 0 & 0 & 0 & 0\\
	0 & 0 & 1 & 0 & 0 & 0\\
	\end{bmatrix} = \begin{bmatrix} 
	0 & 0 & 0 & 0 & 1 & 1\\
    0 & 0 & 0 & 1 & 0 & 1\\
    0 & 0 & 0 & 1 & 1 & 0\\
    0 & 1 & 1 & 0 & 0 & 0\\
    1 & 0 & 1 & 0 & 0 & 0\\
    1 & 1 & 0 & 0 & 0 & 0\\
	\end{bmatrix},
 \end{align*}
 which is the corresponding adjacency matrix to graph $ G {}_{g}\cdot{}_{h} H $~(see~\cref{ex23}).
Now define, 
\[
g'\coloneqq \begin{pmatrix} 
	v_1 & v_2 & v_3 & v_4 & v_5 & v_6\\
	1 & 3 & 5 & 4 & 2 & 6\\
	\end{pmatrix}, \textit{ and } h'\coloneqq \begin{pmatrix} 
	w_1 & w_2 & w_3 & w_4 & w_5 & w_6 \\
	1 & 2 & 3 & 4 & 6 & 5 \\
	\end{pmatrix},
\] 
  as labelings on vertices of $ G $ and $ H $, respectively. Then, we have
  \begin{align*}
    A(G)A(H) = \begin{bmatrix} 
	0 & 0 & 1 & 0 & 1 & 0 \\
	0 & 0 & 0 & 1 & 0 & 1 \\
	1 & 0 & 0 & 0 & 1 & 0 \\
	0 & 1 & 0 & 0 & 0 & 1 \\
 	1 & 0 & 1 & 0 & 0 & 0 \\
	0 & 1 & 0 & 1 & 0 & 0 \\
	\end{bmatrix} \begin{bmatrix} 
	0 & 0 & 0 & 1 & 0 & 0\\
	0 & 0 & 0 & 0 & 0 & 1\\
	0 & 0 & 0 & 0 & 1 & 0\\
	1 & 0 & 0 & 0 & 0 & 0\\
 	0 & 0 & 1 & 0 & 0 & 0\\
	0 & 1 & 0 & 0 & 0 & 0\\
	\end{bmatrix} = \begin{bmatrix} 
	0 & 0 & 1 & 0 & 1 & 0\\
    1 & 1 & 0 & 0 & 0 & 0\\
    0 & 0 & 1 & 1 & 0 & 0\\
    0 & 1 & 0 & 0 & 0 & 1\\
    0 & 0 & 0 & 1 & 1 & 0\\
    1 & 0 & 0 & 0 & 0 & 1\\
	\end{bmatrix},
 \end{align*}
 which is the corresponding adjacency matrix to $ G {}_{g'}\cdot{}_{h'} H $~(see \cref{ex23}). Clearly, the two graphs in~\cref{ex23} are different. 
\end{exa}
\begin{figure}[H]
    \centering
\includegraphics[scale=0.75]{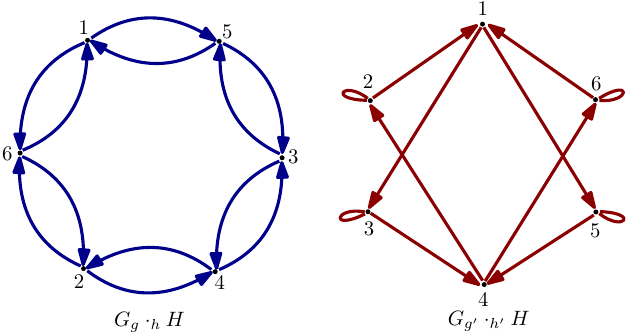}
    \caption{$ G{}_{g}\cdot{}_{h}H $ and $ G{}_{g'}\cdot{}_{h'}H $}
    \label{ex23}
\end{figure}

\noindent We are interested in digraphs where every edge is assigned a weight of 1. In pursuit of this objective, we establish the following definition:

\begin{defn}
   Let $ G $ and $ H $ be two graphs on $ n $ vertices, with labelings $ g\colon V(G) \rightarrow \{1,2,\ldots,n\} $, and $ h\colon V(H) \rightarrow \{1,2,\ldots,n\} $. A pair of $((G,g),(H,h))$ of graphs without loops is called \emph{graphical} if $A(G)A(H)$ is a symmetric $(0,1)$-matrix. 
\end{defn}

\noindent 
\noindent To ensure the resulting graph of a matrix product is a simple graph, we modify the previous product as follows:
\begin{defn}\label{graphical_pair}
    Let  $((G,g),(H,h))$ be a graphical pair. Then the \emph{graphical matrix product} $(G,g)$ and $(H,h)$ denoted  $ G {}_{g}\ast_h H $ is the underlying graph of $ G {}_{g}\cdot{}_{h} H $ i.e. replace each $2$-cycle with a single edge.
\end{defn}
\noindent We observe that the matrix product of a graphical pair $((G,g),(H,h))$ is commutative, specifically $ G {}_{g}\ast_h H = H {}_{h}\ast_g G$. This is due to the symmetry of the adjacency matrices of $G$ and $H$ concerning $g$ and $h$, as exemplified by the equation:
\[ A(G)A(H) = A(G)^tA(H)^t = (A(H)A(G))^t = A(H)A(G) \]
Thus, we arrive at the following conclusion.

\begin{rem}\label{simdiag}
Let a simple graph $X$ can be expressed as $G {}_{g}\ast_h H$.
Then the adjacency matrices $A(G)$ and $A(H)$ with respect to $g$ and $h$ are simultaneously diagonalizable. 
\end{rem}
\noindent A natural question that can be raised is whether there exists a pair of graphs $(G, H)$ for which any matrix product of $G$ and $H$ yields the same result.
Later, we show that such a pair does not exist, see \cref{loop}.
We next introduce an edge colouring for graphs and establish the relationship between this colouring and the matrix product of graphs.
Recall that edge colouring involves the assignment of colours to the edges of a graph. When we use $k$ colours on the edges, we say that $G$ has a $k$-edge colouring. 
It is important to note that $k$-edge colouring is not necessarily proper, meaning that adjacent edges may have the same colour.

\begin{defn}
Let $G$ be a graph with a $2$-edge colouring.
A cycle of length $4$ is called a \textit{coloured diamond} if its edges are alternately coloured.
A $2$-edge colouring of $G$ satisfies the \emph{diamond condition} if, for every pair $\{u,v\}$ of vertices in $G$ such that they are connected with a path of length $2$ with alternating colours, the vertices $\{u,v\}$ belong to exactly one coloured diamond.
\end{defn}

\noindent In the other words, the diamond condition says if $w\in N(u)\cap N(v)$ such that $uw$ is blue(red) and $wv$ is red(blue), then there exists one vertex $z$ in $N(u)\cap N(v)$ such that $uz$ is red(blue) and $zv$ is blue(red).
 
\begin{exa}
In \cref{2-edge colouring}, we provided an example of a 2-edge colouring of $ K_5 $ satisfying the diamond condition.   
\end{exa}

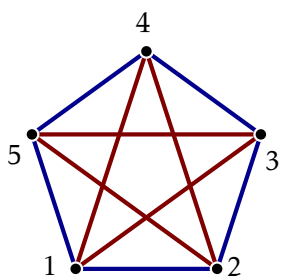
\begin{figure}[H]
    \centering
\begin{tikzpicture}[scale=0.8]	

\begin{scope}[rotate=18]
    \foreach \i in {1,...,5} {
    \node[circle,fill, inner sep=2pt,white] (\i) at (\i*72:2) {};
    \node[circle,fill, inner sep=1.pt] (\i) at (\i*72:2) {};}
    \draw[ultra thick,darkblue] (1) -- (2);
   \draw[ultra thick,darkblue] (2) -- (3);
    \draw[ultra thick,darkblue] (3) -- (4);
    \draw[ultra thick,darkblue] (4) -- (5);
   \draw[ultra thick,darkblue] (5) -- (1);
   \draw[ultra thick,maroon] (4) -- (2);
   \draw[ultra thick,maroon] (2) -- (5);
   \draw[ultra thick,maroon] (5) -- (3);
   \draw[ultra thick,maroon] (3) -- (1);
    \draw[ultra thick,maroon] (1) -- (4);
    
    \draw (0.55,1.9) node[label=above:$4$]{};
    \draw (2.2,0) node[label=below:$3$]{};
    \draw (0.90,-1.95) node {$2$};
    \draw (-2,-1) node {$1$};
    \draw (-2,0.95) node {$5$};

  \foreach \i in {1,...,5} {

    \node[circle,fill, inner sep=2pt,white] (\i) at (\i*72:2) {};
        \node[circle,fill, inner sep=1.4pt] (\i) at (\i*72:2) {};
    }    
\end{scope}
    
\end{tikzpicture}
    \caption{A 2-edge colouring with the diamond condition}\label{2-edge colouring}
\end{figure}

\begin{nota}\cite{Rosen}
Given two graphs $G$ and $ H $ with labelings $g\colon V(G)\to \{1,\ldots,n\}$ and $h\colon V(H)\to \{1,\ldots,n\}$, respectively, the disjoint union of $G$ and $H$  denoted by $ G\oplus H$ is a graph whose vertex set is $\{1,\ldots,n\}$ and edge set is the disjoint unions of $E(G)\cup E(H)$.
\end{nota}

\noindent Let each edge of $G$ and $H$ be assigned a colour $c_1$ and $c_2$, respectively.
Then, the edges of the graph $E(G\oplus H)$ can be classified into two sets based on their colours.
In order to study the matrix multiplication of $G$ and $H$, a more thorough analysis of the edge colouring of $G\oplus H$ is required.
Prasad, Sudhakara, Sujatha, and Vina proved the following lemma.

\begin{lem}{\cite[Theorem 1]{Manjunatha}}\label{lem 3}
Let $G$ and $ H $  be graphs without loops with labelings $g\colon V(G)\to\{1,\ldots,n\}$, and $ h
\colon V(H)\to\{1,\ldots,n\}$.
Then
\begin{enumerate}
    \item $ G {}_{g}\cdot{}_{h} H $ has no loop if and only if $G\oplus H$ is simple (has no multiple edges),
    \item $((G,g),(H,h))$ is graphical if and only if there is a $2$-edge colouring of $G\oplus H$ satisfying the diamond condition.
\end{enumerate}  
\end{lem}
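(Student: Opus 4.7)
The plan is to translate both parts into counts of length-$2$ walks in the $2$-coloured graph $G\oplus H$, where every edge of $G$ is given colour $c_1$ and every edge of $H$ colour $c_2$. The starting identity is
\[
(A(G)A(H))_{ij} \;=\; |\{w : iw\in E(G) \text{ and } wj\in E(H)\}|,
\]
recording the number of $2$-walks from $i$ to $j$ whose first edge lies in $G$ and whose second edge lies in $H$.

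For (1), I would set $i=j$ and use $A(H)^{t}=A(H)$ to obtain
\[
(A(G)A(H))_{ii} \;=\; \sum_{k} A(G)_{ik}A(H)_{ik} \;=\; |\{w : iw\in E(G)\cap E(H)\}|,
\]
the number of edges at $i$ that are shared between $G$ and $H$. This vanishes for every $i$ exactly when $E(G)\cap E(H)=\emptyset$, equivalently when $G\oplus H$ has no repeated edge, so $G {}_{g}\cdot{}_{h} H$ is loopless if and only if $G\oplus H$ is simple.

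For (2), I would analyse the off-diagonal entries. Symmetry of $A(H)$ lets me reverse each $2$-walk, so
\[
(A(G)A(H))_{ji} \;=\; |\{w : iw\in E(H) \text{ and } wj\in E(G)\}|
\]
is the number of $c_{2}c_{1}$-alternating $2$-paths from $i$ to $j$. Hence $A(G)A(H)$ is a symmetric $(0,1)$-matrix off the diagonal precisely when, for every pair $\{i,j\}$ of distinct vertices, the numbers of $c_{1}c_{2}$- and $c_{2}c_{1}$-alternating $2$-paths between them are equal and belong to $\{0,1\}$. This is exactly the diamond condition: given a $c_{1}c_{2}$-path $i$-$w$-$j$, requiring a unique $c_{2}c_{1}$-partner $i$-$z$-$j$ is the same as requiring a unique coloured diamond $i$-$w$-$j$-$z$-$i$ through $\{i,j\}$, and uniqueness of $w$ and of $z$ separately forces uniqueness of the diamond as a whole. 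The diagonal case is absorbed into part (1).

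The principal subtlety I anticipate is the non-degeneracy $w\ne z$ in the reconstructed $4$-cycle: if $w=z$, then the edge $iw$ must carry both colours, forcing $iw\in E(G)\cap E(H)$ and contradicting the simplicity of $G\oplus H$. So in the ``diamond condition implies graphical'' direction I would restrict to the regime in which $G\oplus H$ is simple (invoking part (1)), which automatically supplies $w\ne z$ and makes the diamond-to-matrix reconstruction well-defined.
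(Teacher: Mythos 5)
The paper never proves this lemma --- it is quoted from \cite[Theorem 1]{Manjunatha} --- so there is no internal argument to compare against; judging your proposal on its own, the walk-counting framework is certainly the intended one. Your part (1) is correct and complete: the diagonal entry $(A(G)A(H))_{ii}$ counts the edges at $i$ lying in both $E(G)$ and $E(H)$, i.e.\ the parallel pairs of $G\oplus H$ at $i$. For part (2), your identification of $(A(G)A(H))_{ij}$ and $(A(G)A(H))_{ji}$ with the numbers of $c_1c_2$- and $c_2c_1$-alternating $2$-paths is right, and in the regime where $G\oplus H$ is simple your equivalence with the diamond condition is sound (then $w\neq z$ is automatic, the diagonal vanishes, and ``both counts equal and in $\{0,1\}$'' is exactly ``every alternating $2$-path sits in a unique coloured diamond'').

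The gap is in how you dismiss the degenerate cases. ``Graphical'' means the whole matrix is a symmetric $(0,1)$-matrix, not that it is loopless, so the diagonal is \emph{not} absorbed into part (1): a pair can be graphical while $G\oplus H$ has parallel edges (e.g.\ $G=H=K_2$ gives $A(G)A(H)=I_2$). For the same reason you cannot ``restrict to the regime in which $G\oplus H$ is simple (invoking part (1))'' --- part (1) is an equivalence, not a hypothesis you are handed, and the statement of (2) carries no simplicity assumption; as written, your argument proves only the special case. Moreover, the multi-edge configurations you set aside are exactly where care is needed. Under the literal ``exactly one alternating $4$-cycle'' reading, the implication from the diamond condition to graphicality can fail: with $E(G)=\{12,13,34\}$ and $E(H)=\{12,13,24\}$, every pair joined by an alternating $2$-path lies in exactly one coloured diamond, yet $A(G)A(H)$ is neither symmetric nor $(0,1)$. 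Under the paper's restated reading (a partner vertex $z$, which may coincide with $w$ via the two parallel copies of an edge), the off-diagonal equivalence does go through, but graphicality additionally demands that each vertex lie on at most one parallel pair (diagonal entries at most $1$), and this is not implied by the condition on distinct pairs: take $G=H$ equal to the path on three vertices. So a complete proof must either work with the reading of the diamond condition that covers these degenerate configurations (including the diagonal constraint), or state explicitly that part (2) is being established under the extra hypothesis that $G\oplus H$ is simple, which is the only setting in which the paper actually applies the lemma.
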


\begin{lem}{\cite[Theorem 7]{Manjunatha}}\label{thm 4}
    Let $((G,g),(H,h))$ be a graphical pair of graphs, such that $G{}_{g}\ast{}_{h} H $ has no loops. Then,
    \begin{align*}
        \textnormal{deg}_{G{}_{g}\ast{}_{h} H}v = \textnormal{deg}_{G}v\cdot\textnormal{deg}_{H}v.
    \end{align*}
\end{lem}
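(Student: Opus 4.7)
The plan is to double-count the coloured diamonds of $G\oplus H$ that have $v$ as a corner, relating this count to both $\deg_G(v)\cdot\deg_H(v)$ and $\deg_{G\,{}_g\ast{}_h H}(v)$. By \Cref{lem 3}, $G\oplus H$ carries a $2$-edge colouring (blue for $G$, red for $H$) satisfying the diamond condition, and the adjacency matrix of $G\,{}_g\ast{}_h H$ is the symmetric $(0,1)$-matrix $A(G)A(H)$ (the no-loop hypothesis forces the diagonal to be zero, so no $2$-cycle contraction is needed).

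First I would set up a bijection between the neighbours of $v$ in $G\,{}_g\ast{}_h H$ and the coloured diamonds through $v$. Given a neighbour $w$, the $(0,1)$ property singles out a unique $k$ with $v\sim_G k\sim_H w$, and the symmetry $A(G)A(H)=A(H)A(G)$ singles out a unique $z$ with $v\sim_H z\sim_G w$; then $v\,k\,w\,z$ is a $4$-cycle whose edges alternate in colour, i.e.\ a coloured diamond. The assignment $w\mapsto(v,k,w,z)$ is injective because $w$ is the unique corner of the diamond opposite to $v$, and it is surjective because the vertex opposite $v$ in any coloured diamond through $v$ is, by construction, a neighbour of $v$ in the matrix product.

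Next I would set up a second bijection between coloured diamonds through $v$ and pairs $(k,z)\in N_G(v)\times N_H(v)$. One direction is immediate: a diamond through $v$ uses exactly one blue edge and one red edge at $v$, giving such a pair. For the converse, given any $(k,z)\in N_G(v)\times N_H(v)$, the path $k{-}v{-}z$ is bichromatic, so the diamond condition supplies a unique coloured diamond containing $\{k,z\}$, and composing the two bijections yields
\[
\deg_{G\,{}_g\ast{}_h H}(v) \;=\; |N_G(v)\times N_H(v)| \;=\; \deg_G(v)\cdot\deg_H(v).
\]

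The delicate step is verifying that the diamond produced by the diamond condition from $\{k,z\}$ really contains $v$. I plan to handle this by observing that in any alternating $4$-cycle with $k$ and $z$ as opposite corners, the two length-two paths joining $k$ and $z$ realise the two possible colour orders (blue--red and red--blue); the $(0,1)$ status of the $(k,z)$-entries of $A(G)A(H)$ and $A(H)A(G)$ forces each colour order to be witnessed by a unique intermediate vertex, and since $v$ already witnesses the blue--red path $k{-}v{-}z$, it must be one of the two intermediate corners of the diamond.
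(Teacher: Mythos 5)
The paper never proves this lemma: it is imported verbatim from \cite[Theorem 7]{Manjunatha}, so there is no in-paper argument to measure yours against. Judged on its own, your double count of coloured diamonds is a correct, self-contained proof: under the no-loop hypothesis $A(G\,{}_g\ast{}_h H)=A(G)A(H)$ is a symmetric $(0,1)$-matrix with zero diagonal, the unit $(v,w)$-entries give your first bijection (neighbours of $v$ correspond to diamonds having $v$ and $w$ as opposite corners), and the pairs in $N_G(v)\times N_H(v)$ count the diamonds through $v$, yielding $\deg_{G{}_{g}\ast{}_{h}H}(v)=\deg_G(v)\cdot\deg_H(v)$.

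Two details deserve patching. First, you never verify that the four corners of each alleged diamond are distinct; the needed facts ($k\neq z$, and $y\neq v$ for the fourth corner) follow from the absence of multiple edges in $G\oplus H$, which is exactly part (1) of \cref{lem 3} applied to the no-loop hypothesis, so this should be said. Second, your ``delicate step'' tacitly assumes that the unique diamond containing $\{k,z\}$ has $k$ and $z$ as \emph{opposite} corners, whereas the literal definition only says the pair belongs to some coloured diamond, a priori possibly as adjacent corners. The cleanest repair does not use the diamond condition for existence at all: since $(A(G)A(H))_{kz}=1$ (witnessed by $v$) and the product is symmetric, also $(A(H)A(G))_{kz}=1$, giving a unique $y$ with $k\sim_H y\sim_G z$; then $k\,v\,z\,y$ is an alternating $4$-cycle through $v$, and the uniqueness of the witnesses $v$ and $y$ (or the uniqueness clause of the diamond condition) shows it is the only diamond attached to the pair, which is all your second bijection requires. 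Finally, note a shorter route to the same identity: the $v$-th row sum of $A(G)A(H)$ equals $\deg_{G{}_{g}\ast{}_{h}H}(v)$ because the entries are $0/1$ with zero diagonal, and it also equals $\sum_{k\in N_G(v)}\deg_H(k)$, which collapses to $\deg_G(v)\deg_H(v)$ by \cref{deg} with the roles of $G$ and $H$ exchanged (legitimate by commutativity of the product); your diamond count is essentially the combinatorial translation of this row-sum computation.
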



\begin{cor}\label{thm 5}
    Let $((G,g),(H,h))$ be a graphical pair of graphs, such that $G{}_{g}\ast{}_{h} H $ has no loops. 
    If $G$  and $H$ are regular graphs, then $ G{}_{g}\ast{}_{h} H $ is regular.
\end{cor}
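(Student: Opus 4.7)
The statement follows almost immediately from the preceding Lemma (\cref{thm 4}), so the plan is to apply it as a one-line reduction. Suppose $G$ is $r$-regular and $H$ is $s$-regular. Since the graphical pair $((G,g),(H,h))$ produces $G{}_{g}\ast{}_{h}H$ without loops by hypothesis, \cref{thm 4} is applicable and yields
\[
\deg_{G{}_{g}\ast{}_{h}H}(v) \;=\; \deg_G(v)\cdot\deg_H(v) \;=\; r\cdot s
\]
for every vertex $v$. Since this value is independent of $v$, the graph $G{}_{g}\ast{}_{h}H$ is $(rs)$-regular.

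There is essentially no obstacle here; the work has already been done in \cref{thm 4}, and the corollary is a pure substitution using the definition of a regular graph (all vertex degrees equal a common constant). The only thing to note explicitly is that the regularity constants $r$ and $s$ do not depend on the choice of labelings $g$ and $h$, so the product $rs$ is a well-defined common degree, and therefore no further hypothesis on $g, h$ beyond what is given (namely that the pair is graphical and the product has no loops) is needed.
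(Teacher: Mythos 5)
Your proof is correct and is exactly the intended argument: the paper states this as an immediate corollary of \cref{thm 4}, and your substitution of the constant degrees $r$ and $s$ into the degree-product formula is all that is needed.
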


\begin{lem}{\cite[Corollary 2]{Manjunatha}}\label{deg}
    Let $((G,g),(H,h))$ be a graphical pair of graphs, such that $G{}_{g}\ast{}_{h} H $ has no loops.
    If $u$ and $v$ are connected in $H$, then $\emph{deg}_G(u)=\emph{deg}_G(v)$.
\end{lem}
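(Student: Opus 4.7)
The plan is to exploit the diamond condition supplied by \Cref{lem 3}(2) and build a bijection between $N_G(u)$ and $N_G(v)$ whenever $uv\in E(H)$.

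First, colour every edge of $G$ blue and every edge of $H$ red; this yields a $2$-edge colouring of $G\oplus H$ which, by \Cref{lem 3}(2), satisfies the diamond condition. The hypothesis that $G{}_{g}\ast{}_{h}H$ has no loops is equivalent, via \Cref{lem 3}(1), to $G\oplus H$ being simple, so for every vertex $x$ the blue-neighbourhood $N_G(x)$ and the red-neighbourhood $N_H(x)$ are disjoint.

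Now fix $u,v$ with $uv\in E(H)$ and let $w\in N_G(u)$. Because $w\in N_G(u)$ and $v\in N_H(u)$, disjointness of $N_G(u)$ and $N_H(u)$ forces $w\ne v$; clearly $w\ne u$ as well. The path $w-u-v$ has edges coloured blue and red, so $\{w,v\}$ is joined by an alternating path of length~$2$. The diamond condition provides a \emph{unique} vertex $z$ completing a coloured diamond $w-u-v-z-w$; matching up the alternating colours forces $zv$ to be blue (so $z\in N_G(v)$) and $zw$ to be red, and $z\notin\{w,u,v\}$ since a $4$-cycle has four distinct vertices. Define $\phi(w):=z$.

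To show $\phi:N_G(u)\to N_G(v)$ is a bijection, swap the roles of $u$ and $v$ to obtain a map $\psi:N_G(v)\to N_G(u)$ defined in exactly the same way (now starting from the alternating path $z-v-u$). The uniqueness clause of the diamond condition applied to the pair $\{w,v\}$ gives $\psi(\phi(w))=w$, and applied to $\{z,u\}$ gives $\phi(\psi(z))=z$. Hence $|N_G(u)|=|N_G(v)|$, i.e.\ $\deg_G(u)=\deg_G(v)$.

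The only place requiring care is verifying that the vertex $z$ produced by the diamond condition really does land in $N_G(v)$ and is genuinely different from $w,u,v$; both are immediate once one writes out which edge of the $4$-cycle must be blue and which red. Everything else follows mechanically from uniqueness of the diamond.
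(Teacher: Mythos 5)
The paper never proves this lemma; it is imported verbatim from \cite[Corollary 2]{Manjunatha}, so there is no internal proof to compare against. Judged on its own, your argument is correct and is the natural one: colouring $E(G)$ blue and $E(H)$ red is exactly the colouring the paper intends in \cref{lem 3} (and uses elsewhere), the no-loop hypothesis gives simplicity of $G\oplus H$ and hence $N_G(x)\cap N_H(x)=\emptyset$, and the diamond condition --- equivalently, the fact that the $(w,v)$ and $(v,w)$ entries of the symmetric $(0,1)$-matrix $A(G)A(H)$ are equal and at most $1$ --- produces, for each $w\in N_G(u)$, a unique $z$ with $vz\in E(G)$ and $zw\in E(H)$; your two-sided-inverse construction then gives the bijection $N_G(u)\to N_G(v)$. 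Two cosmetic remarks. First, the attributions of the uniqueness clauses are interchanged: $\psi(\phi(w))=w$ follows from uniqueness applied to the pair $\{u,\phi(w)\}$ (it is $w$ that satisfies the defining property of $\psi(\phi(w))$), while $\phi(\psi(z))=z$ uses the pair $\{\psi(z),v\}$; both statements are available, so nothing breaks. Second, the distinctness $z\notin\{w,u,v\}$ is a consequence to be checked ($z\neq v$ and $z\neq w$ because $G$ and $H$ are loopless, $z\neq u$ because $G\oplus H$ is simple), not something granted by calling the configuration a $4$-cycle --- and in fact only $z\in N_G(v)$ is needed for the bijection, as you observe.
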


\section{Factorization of Graphs via Matrix Product}

A graph $X$ without loops is a \emph{matrix product factorizable} if $X$ can be expressed as a matrix product of adjacency matrices of two graphs.
More precisely we have:
\begin{defn}\label{MPF}
    A simple graph $X$ is a \emph{matrix product factorizable} or an MPF graph if there exists a graphical pair $((G,g),(H,h))$ of simple graphs, and also a labeling of $X$ such that $A(G{}_{g}\ast_{h} H)=A(X)$.
\end{defn}

\noindent The definition provided above naturally leads to the following question: which graphs are MPF? In the following, we will present a couple of lemmas that assist us in showing that certain families of graphs do not possess the MPF graph property.
First, we show that the property of being MFD for a given graph $X$ is independent of fixing a labeling for $X$.

\begin{lem}\label{anylabel}
    Assume a graph $ X $ is an MPF graph. Then for any adjacency matrix $ A(X) $ of $X$, there is a graphical pair of graphs $((G,g),(H,h))$ such that $ A(X) = A(G)A(H) $. 
\end{lem}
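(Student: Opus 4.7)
The plan is to exploit the fact that any two adjacency matrices of the same graph $X$ differ only by simultaneous permutation of rows and columns, and that this permutation can be absorbed into the labelings of the factor graphs $G$ and $H$.

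First, I would unpack the hypothesis. Since $X$ is MPF, by \Cref{MPF} there is some labeling $f_0 \colon V(X) \to \{1,\ldots,n\}$ and a graphical pair $((G, g_0), (H, h_0))$ of simple graphs such that $A(X)_{f_0} = A(G)_{g_0} A(H)_{h_0}$; the right-hand side equals $A(G \,{}_{g_0}\!\ast_{h_0}\! H)$ precisely because the pair is graphical, so the matrix product is already a symmetric $(0,1)$-matrix. Next, I would fix an arbitrary labeling $f \colon V(X) \to \{1,\ldots,n\}$ and note that there exists a unique permutation $\pi$ of $\{1,\ldots,n\}$ with $f = \pi \circ f_0$. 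Writing $P = P_\pi$ for the corresponding permutation matrix, we have the standard identity
\[
A(X)_f = P\, A(X)_{f_0}\, P^t .
\]

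The key step is to push this same permutation through to the factor graphs. Define new labelings $g := \pi \circ g_0$ and $h := \pi \circ h_0$ on $V(G)$ and $V(H)$. Because relabeling vertices of any graph by $\pi$ conjugates its adjacency matrix by $P$, we get
\[
A(G)_g = P\, A(G)_{g_0}\, P^t , \qquad A(H)_h = P\, A(H)_{h_0}\, P^t .
\]
Using $P^t P = I$, a direct calculation gives
\[
A(G)_g A(H)_h \;=\; P\, A(G)_{g_0}\, P^t P\, A(H)_{h_0}\, P^t \;=\; P\, A(G)_{g_0} A(H)_{h_0}\, P^t \;=\; P\, A(X)_{f_0}\, P^t \;=\; A(X)_f.
\]

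Finally, I would verify that $((G,g),(H,h))$ is itself a graphical pair: the product $A(G)_g A(H)_h$ equals $A(X)_f$, which is a symmetric $(0,1)$-matrix because $X$ is a simple graph, and $G$, $H$ are still loopless simple graphs (relabeling does not change the underlying graphs). Hence the factorization $A(X) = A(G)A(H)$ holds with respect to this arbitrary adjacency matrix of $X$, as required. There is no substantial obstacle here; the only thing to be careful about is the bookkeeping between labelings (bijections $V \to \{1,\ldots,n\}$) and their associated permutation matrices, and making sure the same $\pi$ is applied consistently to $X$, $G$, and $H$ so that the two $P$s in the middle of the product cancel.
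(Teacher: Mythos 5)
Your proof is correct and follows essentially the same route as the paper: conjugate the given factorization by the permutation matrix relating the two labelings of $X$, absorbing the conjugation into new labelings of $G$ and $H$ so that the middle $P^tP$ cancels. The only difference is that you spell out the relabeling bookkeeping and the re-verification that the new pair is graphical, which the paper leaves implicit.
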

    \begin{proof}
        Let $ X $ be a graph on $ n $ vertices. Since $ X $ is an MPF graph there are labelings of $ G $, $ H $, and $ X $ such that $ A(G)A(H) = A(X)$. 
        We now assume that $Z$ is the adjacency matrix $X$ with an arbitrary labeling.
        Clearly, $ Z = P^{-1}A(X)'P $ for some permutation matrix $ P $. This implies that $ A(G)A(H) = PZP^{-1} $. So, we have $ (P^{-1}A(G)P)(P^{-1}A(H)P) = A(X) $, as desired.
    \end{proof}

\begin{thm}\cite[Theorem 4] {Manjunatha}\label{odd-edges}
   If $ X $ is an MPF graph, then $ X $ has an even number of edges.
\end{thm}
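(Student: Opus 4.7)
The plan is to set up a $2$-to-$1$ correspondence between edges of $X$ and coloured diamonds in $G\oplus H$, which will immediately show that $|E(X)|$ is even. Write $X = G\,{}_{g}\!\ast_{h} H$ for a graphical pair $((G,g),(H,h))$ and colour each edge of $G\oplus H$ according to which of $G,H$ it comes from. Since $X$ has no loops, Lemma~\ref{lem 3}(1) implies $G\oplus H$ is simple, so $G$ and $H$ are edge-disjoint; and Lemma~\ref{lem 3}(2) says this $2$-edge colouring satisfies the diamond condition.

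First, to each edge $uv \in E(X)$ I would attach a coloured diamond. The hypothesis $A(X)_{uv}=(A(G)A(H))_{uv}=1$ gives a unique $w$ with $uw\in E(G)$ and $wv\in E(H)$; using commutativity $A(G)A(H)=A(H)A(G)$, the equality $A(X)_{vu}=1$ also gives a unique $z$ with $uz\in E(H)$ and $zv\in E(G)$. The four vertices $u,w,v,z$ are pairwise distinct (using looplessness of $G$ and $H$ together with edge-disjointness of $G$ and $H$), so they form a coloured diamond $u$--$w$--$v$--$z$--$u$ with alternating colours. The diamond condition ensures this is the unique diamond containing the pair $\{u,v\}$ as opposite corners.

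Next, I would observe that each coloured diamond $u$--$w$--$v$--$z$--$u$ of $G\oplus H$ contributes exactly two edges to $X$, namely the two pairs of opposite corners $\{u,v\}$ and $\{w,z\}$: both pairs are joined by alternating $2$-paths inside the diamond, and the $(0,1)$-ness of $A(G)A(H)$ then forces $A(X)_{uv}=A(X)_{wz}=1$, so $uv$ and $wz$ are edges of $X$. These two edges are distinct because the four corners are distinct. Hence the map $E(X) \to \{\text{coloured diamonds of } G\oplus H\}$ sending an edge to its unique diamond is well-defined and exactly $2$-to-$1$, giving
\[
|E(X)| \;=\; 2\cdot\#\{\text{coloured diamonds of }G\oplus H\},
\]
which is even.

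The main obstacle I expect is the bookkeeping in the first step — rigorously ruling out degenerate "diamonds" in which two of the corners $u,w,v,z$ coincide. The key collapse to exclude is $w=z$, since that would force $uw\in E(G)\cap E(H)$, contradicting edge-disjointness of $G$ and $H$ (which is where looplessness of $X$ enters via Lemma~\ref{lem 3}(1)); the remaining identifications are ruled out by looplessness of $G$ and $H$ individually.
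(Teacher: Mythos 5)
Your argument is correct. Note, though, that the paper itself gives no proof of this statement: it is quoted verbatim from the cited source (\cite[Theorem 4]{Manjunatha}), so there is no in-paper proof to compare against. Your double-counting via coloured diamonds is a clean self-contained justification: for an edge $uv$ of $X$ the $(0,1)$-entries $(A(G)A(H))_{uv}=(A(G)A(H))_{vu}=1$ give unique intermediate vertices $w$ (blue--red) and $z$ (red--blue), your case analysis correctly rules out all coincidences among $u,w,v,z$ (with $w=z$ excluded by edge-disjointness of $G$ and $H$, which is exactly where looplessness of $X$ enters through \cref{lem 3}), and conversely each coloured diamond forces both pairs of opposite corners to be edges of $X$, so the edge-to-diamond map is onto and exactly $2$-to-$1$, giving $|E(X)|=2\cdot\#\{\text{diamonds}\}$. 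This is in the same spirit as the diamond-condition machinery of \cref{lem 3} that the cited source uses, so it is a faithful reconstruction rather than a genuinely different route.
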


\begin{cor}\label{C_odd}
    The following graphs are not MPF:
    \begin{itemize}
        \item The cycle $ C_{2k+1} $.
        \item The $2n$-vertex path, $P_{2n}$. 
        \item $ 2n $-vertices trees.
        \item The web graph $ W_{2n+1,2} $.
    \end{itemize}
\end{cor}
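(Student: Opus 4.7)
The plan is to apply Theorem \ref{odd-edges} directly: it suffices to verify, for each family in the list, that every graph in the family has an odd number of edges. Since each item is a well-known graph family, this amounts to a routine edge count in each case, and there is no real obstacle — this is essentially a corollary in the most literal sense.

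First I would handle the cycle $C_{2k+1}$: its edge count equals its vertex count, namely $2k+1$, which is odd. Next, for a tree on $2n$ vertices, I would use the standard fact that any tree on $N$ vertices has exactly $N-1$ edges; with $N=2n$ we obtain $2n-1$ edges, which is odd. The path $P_{2n}$ is a special case of a tree on $2n$ vertices, so it inherits the same count; I would note this explicitly rather than treating it separately, but the statement lists it for emphasis.

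For the web graph $W_{2n+1,2}$, I would first recall the definition: $W_{m,2}$ consists of two concentric copies of $C_m$ together with $m$ spokes joining corresponding vertices of the two cycles. Thus $|E(W_{m,2})| = 2m + m = 3m$. Taking $m = 2n+1$ gives $3(2n+1) = 6n+3$ edges, which is odd.

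In every case the edge count is odd, so by Theorem \ref{odd-edges} none of these graphs is MPF. The writeup is therefore just a short paragraph invoking \Cref{odd-edges} together with the four edge-count observations above; the only thing that requires any care is pinning down the definition of $W_{2n+1,2}$, and even that reduces to a one-line multiplication.
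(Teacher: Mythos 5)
Your proposal is correct and is exactly the argument the paper intends: \Cref{C_odd} is stated as an immediate consequence of \Cref{odd-edges}, and your edge counts ($2k+1$ for the odd cycle, $2n-1$ for any tree on $2n$ vertices including $P_{2n}$, and $3(2n+1)$ for the web graph under the paper's definition of $W_{m,2}$ as two copies of $C_m$ joined by $m$ spokes) are all odd, as required. Nothing further is needed.
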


\begin{lem}\label{lem 19}
    Let  $ X $ be a graph without loops on $ n\geq 3 $ vertices with degree sequence $(d_1,\ldots,d_n)$, such that $ d_i = p_1^{j_1}\cdots p_{\ell}^{j_{\ell}} $, where $ p_i $'s are distinct primes and $d_k>0$ for every $k$. If there exists $ p_i \in \{p_1,\ldots,p_{\ell}\} $, such that $ \gcd(p_i,d_k) = 1 $, for each $k\in \{1,\ldots,n\}\sm \{i\}$, then $ X $ is not an MPF graph.
\end{lem}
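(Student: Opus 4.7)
The plan is to derive a contradiction from the assumption that $X$ is MPF by combining the multiplicativity of degrees (\Cref{thm 4}) with the constancy of $\deg_G$ along components of $H$ (\Cref{deg}). Suppose $X = G \mathbin{{}_g\ast{}_h} H$ for some graphical pair $((G,g),(H,h))$. Let $p$ denote the prime of the hypothesis: $p \mid d_i$ but $\gcd(p, d_k) = 1$ for every $k \ne i$. The goal is to exhibit another vertex whose degree in $X$ must also be divisible by $p$, contradicting this coprimality.

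First I would apply \Cref{thm 4} to write $d_v = \deg_G(v)\cdot \deg_H(v)$ for every vertex $v$. Because the hypothesis gives $d_k > 0$ for all $k$, neither factor is ever zero; in particular, vertex $i$ has a neighbour in $G$ and a neighbour in $H$. Since $p \mid d_i = \deg_G(i)\deg_H(i)$ and $p$ is prime, $p$ divides at least one of $\deg_G(i)$, $\deg_H(i)$. By the symmetry of the product (exhibited in the commutativity identity $G \mathbin{{}_g\ast{}_h} H = H \mathbin{{}_h\ast{}_g} G$), we may assume without loss of generality that $p \mid \deg_G(i)$.

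Next I would pick any neighbour $j$ of $i$ in $H$, which exists because $\deg_H(i) \geq 1$. Applying \Cref{deg} (stating that $\deg_G$ is constant on vertices adjacent in $H$) gives $\deg_G(j) = \deg_G(i)$, so $p \mid \deg_G(j)$, and therefore
\[
p \mid \deg_G(j)\cdot \deg_H(j) = d_j.
\]
Since $H$ has no loops, $j \ne i$, contradicting the assumption that $\gcd(p, d_k) = 1$ for every $k \ne i$. This contradiction completes the argument.

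There is no real obstacle here: the proof is a direct two-line application of the multiplicativity and the component-constancy lemmas. The only point that requires a small amount of care is verifying that vertex $i$ actually has a neighbour in $H$ (and, in the symmetric case, in $G$), which is where the positivity assumption $d_k > 0$ is used; without it, one could have $\deg_H(i) = 0$ and the argument on the $H$-side would fail.
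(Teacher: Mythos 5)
Your proof is correct and follows essentially the same route as the paper's: use \Cref{thm 4} to factor the degree, pass to an $H$-neighbour, and apply \Cref{deg} to transfer divisibility by $p$ to that neighbour's degree, contradicting the coprimality hypothesis. Your explicit WLOG step (using commutativity to assume $p \mid \deg_G(i)$ rather than $p \mid \deg_H(i)$) is in fact slightly more careful than the paper, which tacitly assumes $p_i \mid \deg_G(v)$ without comment.
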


\begin{proof}
    Assume to the contrary that $X$ is an MPF graph.
    So there is a labeling $ \gamma\colon V(X) \to\{1,\ldots,n\}$ and  
    let $ G $ and $ H $ be two simple graphs with labelings $ g\colon V(G)\to \{1,\ldots,n\} $ and $ h\colon V(H)\to \{1,\ldots,n\} $, respectively, such that $ G{}_{g}\ast{}_{h} H = X $.
    Let $ v\in V(X) $ be the vertex that $\textnormal{deg}_{X}(v) = d_i $. Also, let  $ E(G) $ be coloured by blue and $ E(H) $ be coloured by red. 
    By \cref{thm 4}, there exist a vertex $ v\in V(G\oplus H) $ such that $ \textnormal{deg}_{X}(v) = \textnormal{deg}_{G}(v)\cdot \textnormal{deg}_{H}(v)  $. 
    Since $\textnormal{deg}_{X}(v)>0$, we conclude that $\textnormal{deg}_{G}(v)>0$ and $\textnormal{deg}_{H}(v)>0$.
    Considering the vertex $ v $ in $ G \oplus H $, then there are $ \textnormal{deg}_{G}(v) $ blue edges and $ \textnormal{deg}_{H}(v)$ red edges incident with $ v $. Let $ \{v,v'\} $ be one of these red edges in $ G \oplus H $. 
    Thus, by \cref{deg} we must have $ \textnormal{deg}_{G}(v)=\textnormal{deg}_{G}(v') $. 
    Since  $ p_i \vert~\textnormal{deg}_{G}(v) $, we infer that $ p_i \vert~\textnormal{deg}_{G}(v') $.
    This implies that $ \gcd(\textnormal{deg}_{X}(v'),p_i) \neq 1 $ which contradicts the assumption that $ \gcd(\textnormal{deg}_{X}(v'),p_i) = 1 $, for degree of all vertices of $ X $ except $ v $.
\end{proof}

\begin{cor}\label{lem 14} The following graphs are not MPF:
\begin{itemize}
    \item The wheel graph $ W_{2n+1} $.(a wheel graph is a graph formed by connecting a single universal vertex to all vertices of a cycle.)
    \item The friendship graph $ F_n $, if $ n\neq 2^k $ for any integer $ k\geq 1 $. (The friendship graph $ F_n $ can be constructed by joining $ n $ copies of $ 
C_3 $ with a common vertex.)
\end{itemize}
    
\end{cor}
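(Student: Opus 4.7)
The natural route is to invoke \Cref{lem 19}: in each family the universal vertex (``hub'') has a much larger degree than any other vertex, and the task is simply to exhibit a prime dividing the hub's degree that is coprime to every other degree in the graph. For each family the plan therefore breaks into three steps: (i) compute the degree sequence, (ii) choose a suitable prime $p$ dividing the hub's degree, and (iii) verify $\gcd(p,d)=1$ for every other degree $d$ that appears, and then appeal to \Cref{lem 19}.

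For the wheel $W_{2n+1}$, understood as a single hub adjoined to a cycle on $2n$ vertices, the degree sequence is $(2n,3,3,\ldots,3)$: the hub has degree $2n$ and each cycle vertex has degree $3$. I would take $p=2$, which divides $2n$ and satisfies $\gcd(2,3)=1$, so that \Cref{lem 19} applied to the hub immediately yields that $W_{2n+1}$ is not MPF.

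For the friendship graph $F_n$, which consists of $n$ triangles sharing a common vertex, the hub has degree $2n$ and every other vertex has degree $2$. I would apply \Cref{lem 19} with the hub as the distinguished vertex and $p$ equal to any odd prime divisor of $2n$: then $p\mid 2n$ while $\gcd(p,2)=1$, matching the hypothesis of \Cref{lem 19}. The only thing to check is that an odd prime divisor of $2n$ actually exists, which is equivalent to $n$ failing to be a power of $2$, exactly the hypothesis of the corollary.

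Expected main obstacle: essentially none, since \Cref{lem 19} does all of the heavy lifting; the only care needed is in computing the degree sequences correctly and in choosing the convention for $W_{2n+1}$ so that the hub's degree is even, which guarantees that the prime $2$ is available and that the statement holds without further hypotheses on $n$.
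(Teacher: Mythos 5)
Your proposal is exactly the paper's intended argument: the corollary is stated as an immediate consequence of \Cref{lem 19}, applied to the hub vertex of degree $2n$ with the prime $2$ for $W_{2n+1}$ (all other degrees being $3$) and with an odd prime divisor of $n$ for $F_n$ (all other degrees being $2$). The only corner case is $n=1$, where $2n$ has no odd prime divisor even though $1\neq 2^{k}$ for $k\geq 1$; this is harmless since $F_1=K_3$ has an odd number of edges and is already excluded by \Cref{odd-edges}.
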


\begin{lem}\label{lem 20}
    Let $X$ be an MPF graph without loops.
    Then the multiplicity of a prime degree vertex in the degree sequence is even.
\end{lem}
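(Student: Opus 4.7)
The plan is to count $U = \{v \in V(X) : \deg_X(v) = p\}$ modulo $2$. By \cref{thm 4}, $\deg_G(v)\cdot\deg_H(v) = p$ for every $v \in U$; primality forces $\{\deg_G(v), \deg_H(v)\} = \{1,p\}$, so $U$ partitions as $U = U_1 \sqcup U_2$, where $U_1 = \{v : \deg_G(v)=p,\, \deg_H(v)=1\}$ and $U_2 = \{v : \deg_G(v)=1,\, \deg_H(v)=p\}$. First I would define a canonical partner map $\tau$ on $U$: for $v \in U_1$ let $\tau(v)$ be the unique $H$-neighbor of $v$, and for $v \in U_2$ let $\tau(v)$ be the unique $G$-neighbor. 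By \cref{deg} (together with its $G \leftrightarrow H$ symmetric form, immediate from the commutativity $A(G)A(H)=A(H)A(G)$ noted in \cref{simdiag}), $\tau(v)$ lies in $S_p := \{v : \deg_G(v)=p\}$ when $v \in U_1$ and in $S'_p := \{v : \deg_H(v)=p\}$ when $v \in U_2$. Whenever $\tau(v) \in U$ we get $\tau \circ \tau = \mathrm{id}$ at $v$, so such vertices come in natural pairs and contribute an even count to $|U|$.

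Next I would bound the remaining ``bad'' set $U^{\mathrm{bad}} := \{v \in U : \tau(v) \notin U\}$ modulo $2$. Since \cref{deg} shows $S_p$ is closed under $H$-neighbors, the restriction $H|_{S_p}$ is a well-defined induced subgraph whose vertices of degree $1$ are exactly $U_1$. Its $H$-edges either connect two vertices of $U_1$ or connect $U_1$ to $S_p \setminus U_1$; the latter number exactly $|U_1^{\mathrm{bad}}| := |U^{\mathrm{bad}} \cap U_1|$. Counting edge-endpoints in $U_1$ (two per $U_1$--$U_1$ edge, one per boundary edge) gives $|U_1| \equiv |U_1^{\mathrm{bad}}| \pmod 2$, and the analogous count in $G|_{S'_p}$ yields $|U_2| \equiv |U_2^{\mathrm{bad}}| \pmod 2$. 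Summing, $|U| \equiv |U_1^{\mathrm{bad}}| + |U_2^{\mathrm{bad}}| \pmod 2$, which reduces the lemma to showing the right-hand side is even.

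The crux, and the main obstacle, is this last parity. Here I would invoke the diamond condition (\cref{lem 3}): for $v \in U_1^{\mathrm{bad}}$ with hub $w = \tau(v)$ (satisfying $\deg_G(w)=p$ and $\deg_H(w) \geq 2$), the diamond condition yields a bijection $N_G(v) \to N_G(w)$ via $H$-edges, and applying it at every $H$-neighbor of $w$ forces the pendant count $m_H(w) := |N_H(w) \cap U_1|$ to be constant across the $G$-component of $w$, with a dual statement for $m_G$ on $H$-components. I would then couple the contributions to $|U_1^{\mathrm{bad}}|$ against those to $|U_2^{\mathrm{bad}}|$ by focusing on hubs $w$ with $\deg_G(w) = \deg_H(w) = p$, where the symmetric role of $G$ and $H$ balances the two counts. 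The hard part will be verifying this balancing precisely: a small example with $p = 3$ (a central vertex carrying a $K_{1,3}$ in $H$ mirrored by one in $G$) already shows $|U_1|$ and $|U_2|$ may individually be odd, so the even parity of $|U|$ truly must emerge from coupling the two sums rather than handling them separately.
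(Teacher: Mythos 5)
Your reduction is sound as far as it goes: by \cref{thm 4} every vertex of $X$-degree $p$ is of type $(p,1)$ or $(1,p)$, the partner map $\tau$ is well defined, $\tau$-paired ``good'' vertices cancel modulo $2$, and the endpoint count in $H|_{S_p}$ and $G|_{S'_p}$ correctly gives $|U|\equiv|U_1^{\mathrm{bad}}|+|U_2^{\mathrm{bad}}|\pmod 2$. But the argument stops exactly where the lemma actually lives: you never prove that this last quantity is even. Your closing paragraph is a plan, not a proof --- the bijection $N_G(v)\to N_G(\tau(v))$, the constancy of the pendant count $m_H$ on $G$-components, and above all the proposed coupling of $U_1^{\mathrm{bad}}$ against $U_2^{\mathrm{bad}}$ are all left unverified, and the coupling idea cannot be the whole mechanism, because the bad vertices need not distribute between $U_1$ and $U_2$ at all. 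For instance, let $G$ be the disjoint union of the three triangles $\{v,a_1,a_2\}$, $\{w,b_1,b_2\}$, $\{u,c_1,c_2\}$ and $H$ the disjoint union of the three paths $v\,w\,u$, $a_1\,b_1\,c_1$, $a_2\,b_2\,c_2$. One checks directly that $A(G)A(H)$ is a symmetric $(0,1)$-matrix with zero diagonal, so this is a graphical pair, and in the resulting $X$ each of the six vertices of prime degree $p=2$ (namely $v,u,a_1,a_2,c_1,c_2$, all of type $(2,1)$) has its unique $H$-neighbour of $H$-degree $2$, hence of $X$-degree $4$ and outside $U$; so every vertex of $U$ is bad and $U_2=\emptyset$. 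The entire parity is carried inside $U_1^{\mathrm{bad}}$, and a correct proof must explain why that count is even --- precisely the step you did not supply.

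For comparison, the paper's own proof is your pairing with the bad set implicitly assumed empty: it asserts, citing \cref{lem 3}, that the unique red neighbour $v_1$ of a type-$(p,1)$ vertex again has exactly one red neighbour, hence lies in $Z$. The nine-vertex pair above shows that this is not a consequence of the diamond condition, so your instinct to isolate $U^{\mathrm{bad}}$ identifies a real issue; but neither your sketch nor the lemmas you cite dispose of it. What is missing is a genuine structural statement about hubs $h$ with $\deg_G(h)=p$ and $\deg_H(h)\ge 2$ --- for example, that the total number of their $H$-pendant neighbours in $S_p$ is even --- and nothing in the proposal establishes anything of that kind. As it stands, the submission is an unfinished reduction of \cref{lem 20}, not a proof of it.
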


\begin{proof}
     Assume to the contrary that $X$ is an MPF graph into $G$ and $H$ with the labeling $ \gamma\colon V(X)\to \{1,2\ldots,n\} $ for $ X $, and labelings $ g\colon V(G)\to \{1,2,\ldots,n\} $ and $ h\colon V(H)\to \{1,2,\ldots,n\} $, respectively. Since $ p $ is prime with odd multiplicity in $ X $, then by \cref{thm 4}, either $ G $ or $ H $ contains an odd number of vertices with degree $ p $. 
     Let $Z$ be the set of all vertices in $X$ whose degrees are $p$.
     Suppose $ E(G) $ is coloured blue and $ E(H) $ is coloured red. 
     Now let $ v\in Z $  and assume that $ \deg_{G}(v) =p $ and  $ \deg_{H}(v) =1 $, see \cref{thm 4}. 
     This means $ v $ has $ p $ neighbours with blue edges in $ G\oplus H $ and has exactly 1 neighbour with a red edge in $ G\oplus H $. 
     Let $ vv_1 $ be this specific red edge in $ G \oplus H $. Then by \cref{lem 3}, the vertex $ v_1 $ must have $ p $ neighbours with blue edges and exactly 1 neighbour with a red edge in $G\oplus H $. So, $ \{v,v_1\} $ is a red edge in $ G\oplus H $ such that $ \deg_{G}(v) = \deg_{G}(v') = p $. 
     Therefore we deduce that $v_1\in Z$.
     Now we delete both $v$ and $v_1$ from $Z$ and repeat the same argument we a new vertex $u\in Z\sm \{v,v'\}$.
     If there are odd vertices in $ X $ with degree $ p $($|Z|$ is an odd number), then by using the above argument, we end up with exactly one vertex~$ w $ of the same degree, then it is not possible.
 \end{proof}

 \begin{cor}\label{odd-path}
     The following graphs are not MPF:
    \begin{itemize}
    \item $ (2n+1) $-vertex path graph, $ P_{2n+1} $.
        \item $ K_{p,q} $, where $ p $, $ q $ are distinct primes.
    \end{itemize}
\end{cor}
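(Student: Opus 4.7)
The plan is to obtain both statements as immediate applications of \cref{lem 20}: in each case I would read off the degree sequence and exhibit a prime degree whose multiplicity in the sequence is odd.

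For the odd path $P_{2n+1}$ the two endpoints have degree $1$ (not prime, hence harmless) while the remaining $2n-1$ internal vertices all have the prime degree $2$; since $2n-1$ is odd, \cref{lem 20} immediately rules out MPF factorization. For $K_{p,q}$ with distinct primes $p,q$, the degree sequence consists of $p$ vertices of degree $q$ together with $q$ vertices of degree $p$, so both distinct prime degrees appear with prime multiplicities. I would then split on whether $2\in\{p,q\}$. If $p,q$ are both odd primes, the multiplicities $p,q$ of the prime degrees $q,p$ are themselves odd and \cref{lem 20} applies to either. If instead $p=2$ and $q$ is an odd prime, then the prime degree $2$ has the odd multiplicity $q$, and \cref{lem 20} applies again.

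I do not anticipate a genuine obstacle, since the corollary is a short consequence of \cref{lem 20} once the degree sequences are in hand. The only point that requires a little care is the case $p=2$ in the bipartite family: the multiplicity of the odd prime degree $q$ is then $2$, which is even and would not by itself trigger the lemma, so one must remember to check the other prime degree (namely $2$, which has odd multiplicity $q$). Handling both prime degrees in parallel avoids this pitfall.
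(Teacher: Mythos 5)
Your proposal is correct and matches the paper's intended argument: the corollary is stated as an immediate consequence of \cref{lem 20}, exactly via the degree-sequence reading you give ($2n-1$ vertices of prime degree $2$ in $P_{2n+1}$; a prime degree of odd multiplicity in $K_{p,q}$, using degree $2$ when $p=2$). Your explicit handling of the $p=2$ case is a sensible precaution that the paper leaves implicit.
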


\begin{prop}\label{prop:path}
    The path graph $P_n$ is not MPF.
\end{prop}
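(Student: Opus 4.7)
The plan is to split on the parity of $n$ and invoke two earlier results already established in the excerpt: the parity-of-edges obstruction from \cref{odd-edges} handles even $n$, while the prime-multiplicity obstruction from \cref{lem 20} handles odd $n$. (For $n=1$ the statement is vacuous / trivially true in the degenerate sense, and for $n=2$ the single-edge case falls under the even branch.)

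Concretely, first I would dispose of the case where $n$ is even. The path $P_n$ on $n$ vertices has exactly $n-1$ edges, and when $n$ is even this is odd. \Cref{odd-edges} then immediately forbids $P_n$ from being MPF, since every MPF graph must have an even number of edges. This is the easy half and essentially reiterates the second bullet of \cref{C_odd}.

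Next I would handle the odd case, say $n = 2m+1$ with $m \geq 1$. Here $P_n$ has degree sequence $(1, 2, 2, \ldots, 2, 1)$, so the prime $2$ appears with multiplicity exactly $n-2 = 2m-1$, which is odd. Applying \cref{lem 20}, which says that for any MPF graph the multiplicity of every prime degree in the degree sequence must be even, we obtain a contradiction. This is exactly the first bullet of \cref{odd-path}, which the proposition subsumes.

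The main ``obstacle'' is really just organizational rather than mathematical: both halves of the argument are already embedded in the preceding corollaries, so the proof of \cref{prop:path} amounts to noting that the two results together cover every parity of $n$. No new technique is required, and the cleanest presentation is a two-line case split citing \cref{odd-edges} and \cref{lem 20} respectively.
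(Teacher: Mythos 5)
Your proof is correct and is essentially the paper's own argument: the paper simply cites \cref{C_odd} (the even-$n$ case, which rests on \cref{odd-edges}) and \cref{odd-path} (the odd-$n$ case, which rests on \cref{lem 20}), and you have merely unwound those two corollaries to the underlying lemmas with the same parity split.
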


\begin{proof}
    It follows from \cref{C_odd} and \cref{odd-path}. 
\end{proof}

\begin{prop}{\cite[Corollary 3]{Manjunatha}}\label{odd-degree-vertex}
    Let $ X $ be a graph on $ n>3 $ vertices, where $ n $ is even. If there exists a vertex $ x \in V(X) $, such that $ \deg(x) = n-1 $, then $ X $ is not an MPF graph. 
\end{prop}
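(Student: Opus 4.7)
The plan is to assume for contradiction that $X$ is MPF and extract a parity conflict from the structure forced on $G$ (or $H$) by the existence of a universal vertex. Suppose $X = G\,{}_g\!\ast_h\, H$ for some graphical pair $((G,g),(H,h))$ of simple loopless graphs, and let $x\in V(X)$ satisfy $\deg_X(x)=n-1$. Set $d_1=\deg_G(x)$ and $d_2=\deg_H(x)$. By \Cref{thm 4}, $d_1 d_2 = n-1$, and since $n$ is even, $n-1$ is odd; therefore both $d_1$ and $d_2$ are odd. I will derive a contradiction by showing that $d_1$ must actually be even.

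The key move is to read off the $(x,v)$-entry of $A(G)A(H)=A(X)$. Because $x$ is adjacent to every other vertex of $X$, we have $(A(G)A(H))_{xv}=1$ for all $v\neq x$. Expanding the matrix product, this says that for each $v\neq x$ there is exactly one $w$ with $xw\in E(G)$ and $wv\in E(H)$; equivalently,
\[
\bigl|\,N_G(x)\cap N_H(v)\,\bigr| \;=\; 1 \qquad\text{for every } v\neq x.
\]
Write $A:=N_G(x)$, so $|A|=d_1$, and note $x\notin A$ since $G$ is loopless.

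Now I specialize the identity to $v\in A$. For every $v\in A\subseteq V(X)\setminus\{x\}$, there is exactly one $w\in A$ with $wv\in E(H)$. In other words, the subgraph $H[A]$ induced by $A$ is $1$-regular, hence a perfect matching on $A$, and so $|A|=d_1$ is even. This contradicts the parity $d_1$ odd obtained in the first paragraph, completing the proof.

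I do not expect a serious obstacle, but one should handle the small-degree edge case $d_1=1$ explicitly: then $A=\{u_0\}$ is a singleton, the ``perfect matching'' language is degenerate, and the condition $|A\cap N_H(u_0)|=1$ would force $u_0u_0\in E(H)$, which is impossible as $H$ has no loops. Thus the contradiction is uniform across all factorizations $d_1 d_2=n-1$, and the proposition follows without needing a case split on whether $n-1$ is prime or composite.
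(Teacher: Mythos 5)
Your proof is correct. Note, however, that the paper does not prove this proposition at all: it is quoted as Corollary 3 of \cite{Manjunatha}, so there is no internal proof to compare against, and your argument is an independent, self-contained derivation using only ingredients already available in the paper. Concretely, you combine the degree-product formula of \cref{thm 4} (which forces $\deg_G(x)\cdot\deg_H(x)=n-1$ to be odd, hence both factors odd) with a direct reading of the universal vertex's row in $A(G)A(H)=A(X)$: the condition $(A(G)A(H))_{xv}=1$ for all $v\neq x$, specialized to $v\in N_G(x)$, says that every vertex of $N_G(x)$ has exactly one $H$-neighbour inside $N_G(x)$, so $H$ induces a perfect matching on $N_G(x)$ and $\deg_G(x)$ is even --- a clean parity contradiction. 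The loosely analogous tool stated in the paper, \cref{deg} (adjacent vertices in $H$ have equal $G$-degree), is not needed; your entrywise argument is sharper in that it extracts the matching structure on the neighbourhood directly, it handles the degenerate case $\deg_G(x)=1$ via looplessness of $H$ as you note, and it does not even use the hypothesis $n>3$. So the proposal stands as a valid replacement proof rather than a reconstruction of the cited one.
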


\begin{cor}\label{W_2n-K_2n}
    The following graphs are not MPF:
    \begin{itemize}
        \item The complete graph $ K_{2n} $,
        \item The wheel graph $ W_{2n} $.
    \end{itemize}
\end{cor}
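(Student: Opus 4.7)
The plan is to obtain both statements as immediate specializations of \cref{odd-degree-vertex}. That proposition rules out any graph on an even number $n>3$ of vertices that has a vertex of full degree $n-1$, and both $K_{2n}$ and $W_{2n}$ have exactly this feature whenever they are large enough, so essentially no work is needed once the hypotheses are checked.

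For the complete graph $K_{2n}$, I first assume $n\geq 2$, so that the total number of vertices $2n$ is even and satisfies $2n>3$. Every vertex of $K_{2n}$ has degree $2n-1$, hence \cref{odd-degree-vertex} applies and $K_{2n}$ is not MPF. The only remaining case is $n=1$, i.e.\ $K_2$; here the graph has exactly one edge, an odd number, and so \cref{odd-edges} rules it out directly.

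For the wheel graph $W_{2n}$, interpreted as in \cref{lem 14} as a wheel on $2n$ vertices (a hub joined to a $(2n-1)$-cycle), the hub is a universal vertex of degree $2n-1$. Assuming $n\geq 2$ so that $2n>3$ is even, \cref{odd-degree-vertex} again applies to the hub and shows $W_{2n}$ is not MPF. The degenerate case $W_2$ is not a wheel in the usual sense, so there is nothing further to check.

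There is no real obstacle in this argument; the entire content is already absorbed into \cref{odd-degree-vertex}, and the proof amounts to verifying the parity and size hypotheses and pointing to the universal vertex. The only thing worth being explicit about is handling the boundary $n=1$ separately via the edge-count obstruction of \cref{odd-edges}, so that the statement covers all $n\geq 1$ uniformly.
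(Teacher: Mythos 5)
Your proof is correct and follows essentially the same route as the paper: both items are immediate applications of \cref{odd-degree-vertex} to the universal vertex of degree $2n-1$ in $K_{2n}$ and in the wheel on $2n$ vertices. Your extra handling of the boundary case $K_2$ via \cref{odd-edges} is a harmless (and slightly more careful) addition.
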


\begin{thm}\label{wheel-graph}
    The wheel graph $ W_n $ is not an MPF graph.
\end{thm}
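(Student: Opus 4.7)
The plan is to split into two cases according to the parity of $n$. A wheel graph $W_n$ in the convention of this paper has $n$ vertices in total, consisting of a hub joined to every vertex of a cycle $C_{n-1}$, so $W_n$ is only defined for $n\geq 4$. Both parities have in fact already been reduced to previously established results, so the theorem is just the combination of the two earlier corollaries.

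For $n=2k$ even, the wheel $W_{2k}$ has $2k$ vertices and its hub is a vertex of degree $2k-1$. This is precisely the hypothesis of \cref{odd-degree-vertex}, which therefore shows $W_{2k}$ is not MPF. Indeed, this is exactly the content of the first bullet of \cref{W_2n-K_2n}.

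For $n=2k+1$ odd (so $k\geq 2$), the degree sequence of $W_{2k+1}$ consists of a single hub of degree $2k$ together with $2k$ rim vertices of degree $3$. Applying \cref{lem 19} with prime $p=2$, we have $p\mid 2k$ while $\gcd(2,3)=1$, so the hypotheses of that lemma are satisfied and $W_{2k+1}$ is not MPF. This is the first bullet of \cref{lem 14}.

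No genuine obstacle is expected: the substantive work lies in \cref{odd-degree-vertex} and \cref{lem 19}, and the present statement is simply the bookkeeping that merges the even and odd cases to cover all admissible wheels.
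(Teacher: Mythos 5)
Your proof is correct and takes essentially the same route as the paper's: the even case is \cref{odd-degree-vertex} (recorded as \cref{W_2n-K_2n}) applied to the hub of degree $n-1$, and the odd case is \cref{lem 19} with $p=2$ (recorded as \cref{lem 14}). The only slip is cosmetic: the wheel appears as the second bullet of \cref{W_2n-K_2n}, not the first.
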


\begin{proof}
    If $ n $ is odd, then \cref{lem 14} applies. If $ n $ is even, then \cref{W_2n-K_2n} applies.
\end{proof}

\begin{prop}\label{loop}
There are no graphs $G$ and $H$ without loops such that, for every graphical pair $((G,g),(H,h))$ of $G$ and $H$, the matrix product $G_{g} \ast_{h} H$ always yields a graph $X$ without isolated vertices and loops.
\end{prop}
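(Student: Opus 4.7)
The plan is to prove the contrapositive: given any loopless graphs $G$ and $H$ on the same number $n$ of vertices, exhibit a graphical pair $((G,g),(H,h))$ whose product $X = G\,{}_g\ast_h H$ has either an isolated vertex or a loop. I would argue by cases based on whether $G$ and $H$ have edges and isolated vertices, using \cref{lem 3} and \cref{thm 4} as the main tools.

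The trivial case is when $E(G) = \emptyset$ (or symmetrically $E(H) = \emptyset$). Then $A(G) = 0$, so $A(G)A(H) = 0$ for every choice of labelings $g,h$. The zero matrix is trivially a symmetric $(0,1)$-matrix, so $((G,g),(H,h))$ is a graphical pair, and the product is the empty graph on $n$ vertices, which consists entirely of isolated vertices. This alone contradicts the claim that a pair $(G,H)$ with the stated property exists.

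If instead $G$ has at least one edge but possesses an isolated vertex $v$ (and symmetrically for $H$), then by \cref{thm 4} any graphical pair puts a vertex of degree $\deg_G(v)\cdot\deg_H(h^{-1}(g(v))) = 0$ at label $g(v)$ in $X$, and so $X$ has an isolated vertex. To apply this I must first produce at least one concrete graphical pair: my plan is to first construct a graphical pair on the non-isolated subgraphs of $G$ and $H$ (which have minimum degree at least one) and then extend the labelings arbitrarily on the isolated vertices, noting that adding isolated rows and columns to $A(G)$ or $A(H)$ preserves the property of $A(G)A(H)$ being a symmetric $(0,1)$-matrix.

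The main case is when both $G$ and $H$ have minimum degree at least one and at least one edge. Here the aim is to find a graphical pair for which $G \oplus H$ has a multi-edge (a shared edge between $G$ and $H$ under the labelings); by \cref{lem 3}(1), this forces a loop in the product at the corresponding vertex. My construction starts from any graphical pair, whose existence can be established via the $2$-edge-colouring and diamond-condition characterization of \cref{lem 3}(2), and then alters the labelings so as to align an edge of $G$ with an edge of $H$. The hard part will be precisely this last step: showing that such a swap of labels can always be arranged without destroying the diamond condition of \cref{lem 3}(2). I expect this to require a careful local analysis of how the $2$-edge colouring of $G\oplus H$ transforms under transpositions of labels, identifying which pairs of edges in $G$ and $H$ admit an alignment that is still compatible with graphicality.
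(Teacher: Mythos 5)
Your plan hinges on being able to exhibit, for every pair of loopless graphs $G,H$, an actual graphical pair $((G,g),(H,h))$, and this is the gap: graphical pairs need not exist at all. \Cref{lem 3} only characterizes when a \emph{given} pair of labelings is graphical; it does not produce one. For instance, for $G=H=K_3$ every choice of labelings yields $A(G)A(H)=(J_3-I_3)^2=J_3+I_3$, whose diagonal entries equal $2$, so no graphical pair exists whatsoever. Consequently both your second case (building a graphical pair on the non-isolated parts and extending) and your main case (starting ``from any graphical pair, whose existence can be established via \cref{lem 3}(2)'') rest on an existence claim that is false in general. Moreover, the step you yourself flag as the hard part --- perturbing the labelings to align an edge of $G$ with an edge of $H$ while preserving graphicality --- is left unproved and is genuinely doubtful: a shared edge forces nonzero diagonal entries in $A(G)A(H)$, and nothing in your sketch controls the off-diagonal entries, which can easily exceed $1$ after such a swap. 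Only your first observation (if $E(G)=\emptyset$ the product is the empty graph) stands on its own.

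The paper's proof sidesteps all of this by reading the statement as asserting that no pair $(G,H)$ has the property that \emph{every} choice of labelings yields a loop-free, isolated-vertex-free graph; a choice of labelings may therefore ``fail'' simply because $A(G)A(H)$ has an entry greater than $1$, i.e.\ because the pair is not graphical. Concretely: if both graphs have a vertex of degree at least $2$, labelings that align the two closed neighbourhoods force an entry $\geq 2$; hence, for the hypothetical pair, $H$ would have to be a perfect matching, and then explicit labelings (the matching $i\sim 2n+1-i$ against a neighbourhood-aligned labeling of $G$) produce a zero row, i.e.\ an isolated vertex, with the case of two perfect matchings handled separately. So the paper never needs to manufacture a graphical pair for arbitrary $G,H$ --- exactly the step your argument cannot supply. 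If you want to keep your stricter reading (exhibiting a genuine graphical pair with a bad product), the $K_3$ example shows that reading is not provable as stated, so the interpretation itself must be revisited before the case analysis can be repaired.
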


\begin{proof}
If either $G$ or $H$ is a complete graph, according to \cref{lem 3}, the other graph must have no edges, resulting in $X$ having an isolated vertex.

\noindent \textbf{Case 1:} Suppose there exists a vertex $u$ in $G$ with a degree of at least $2$. Let $g$ be a labeling of $G$ such that the labels of $u$ and its neighbors are ${1,\ldots,\textnormal{deg}_G(u)+1}$.
If $H$ has a vertex $v$ with a degree of at least $2$, then we can consider a labeling $h$ of $H$ such that the labels of $v$ and its neighbors are ${1,\ldots,\textnormal{deg}_H(v)+1}$. In this case, it is evident that $A(G)A(H)$ has an entry with a value greater than $1$. 
Consequently, it implies that the degree of each vertex is exactly $1$, leading to the conclusion that $H$ is a perfect matching. It is worth noting that, in this case, the number of vertices in both $G$ and $H$ must be even. As a result, according to \cref{odd-degree-vertex}, the degree of each vertex is at most $n-2$.
Now, let's consider the following labelings for $G$ and $H$:
Let $u$ be a vertex of $G$. Define the labeling $g$ of $G$ such that the labels of $u$ and its neighbors are ${1,\ldots,\textnormal{deg}_G(u)+1}$. Additionally, define the labeling $h$ of $H$ such that $i\sim 2n+1-i$.
In this case, we observe that $A(G)A(H)$ has a zero row corresponding to an isolated vertex in $X$.

\noindent \textbf{Case 2:} We now assume that both $H$ and $G$ are perfect matchings. In this scenario, it is not difficult to find labelings $g$ and $h$ for $G$ and $H$ respectively, such that $G_{g} \ast_{h} H$ has an isolated vertex.
 \end{proof}

\begin{lem}
    Let $ X $ be a graph on $ 2n+1 $ vertices with the degree sequence $ \{2(2k_i+1)\}_{i=1}^{2n+1} $. Then $ X $ is not an MPF graph.
\end{lem}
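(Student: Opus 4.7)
The plan is to derive a contradiction from the handshake lemma applied to both factors. Suppose for contradiction that $X$ is MPF, so by \cref{anylabel} there is a graphical pair $((G,g),(H,h))$ with $A(X)=A(G)A(H)$. For every vertex $v$ of $X$ we have $\deg_X(v)=2(2k_v+1)>0$, so by \cref{thm 4} both $\deg_G(v)$ and $\deg_H(v)$ are positive and
\[
\deg_G(v)\cdot\deg_H(v)=2(2k_v+1)\equiv 2\pmod 4.
\]

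The next step is a $2$-adic analysis of this identity. Writing $\deg_G(v)=2^{\alpha}a$ and $\deg_H(v)=2^{\beta}b$ with $a,b$ odd, the congruence forces $\alpha+\beta=1$. Hence for each vertex $v$ exactly one of $\deg_G(v)$, $\deg_H(v)$ is odd and the other is $\equiv 2\pmod 4$. This lets me partition
\[
V(X)=V_1\sqcup V_2,\qquad V_1=\{v:\deg_G(v)\text{ is odd}\},\ V_2=\{v:\deg_G(v)\equiv 2\!\!\pmod 4\},
\]
and by the observation above $V_2$ coincides exactly with $\{v:\deg_H(v)\text{ is odd}\}$.

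The finish is now immediate from the handshake lemma. Applied to $G$, the set $V_1$ of odd-degree vertices of $G$ has even cardinality; applied to $H$, the set $V_2$ of odd-degree vertices of $H$ also has even cardinality. Therefore $|V_1|+|V_2|$ is even, contradicting $|V_1|+|V_2|=|V(X)|=2n+1$.

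There is no real obstacle here; the only moving part is the mod-$4$ bookkeeping that pins down which of the two factor degrees is odd at each vertex, and once that dichotomy is in place the parity contradiction from the two handshake applications is forced by the hypothesis that $|V(X)|$ is odd.
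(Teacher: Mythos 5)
Your proof is correct and follows essentially the same route as the paper: the observation that each $\deg_X(v)\equiv 2\pmod 4$ forces exactly one of $\deg_G(v),\deg_H(v)$ to be odd, combined with the handshake lemma in both $G$ and $H$ and the oddness of $2n+1$. Your mod-$4$ bookkeeping and explicit partition $V_1\sqcup V_2$ is, if anything, a slightly cleaner write-up of the paper's argument (which contains a small typo at the final contradiction), but there is no substantive difference.
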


\begin{proof}
Assume, for the sake of contradiction, that $ X $ is a MPF graph. 
There exists a graphical pair $ ((G,g),(H,h)) $, consisting of graphs without loops, such that $ G_{g}\ast{}_{h}H = X $. 
Let the degree sequences of $ G $ and $ H $ be denoted by $d_1,d_2,\ldots,d_{2n+1}$ and $ d'_1,d'_2,\ldots,d'_{2n+1} $, respectively. Additionally, let $ \ell_{e} $ be the number of vertices with even degree in $ G $, and $ \ell_{o} $ be the number of vertices with odd degree in $ G $. Clearly, $ \ell_{e} $ is odd and $ \ell_{o} $ is even. Now according to \cref{thm 4}, there are $ \ell_{e} $ vertices with odd degree in $ H $ and $ \ell_{o} $ vertices with even degree in $ H $, which is a contradiction, since $ \ell_{o} $ is odd. 
\end{proof}

\begin{cor}\label{K_4n+3}
    The complete graph $ K_{4n+3} $ is not an MPF graph.
\end{cor}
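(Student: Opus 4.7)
The plan is to derive this as an immediate consequence of the preceding lemma, so the whole proof is just a matter of verifying that $K_{4n+3}$ fits the hypotheses of that lemma. First I would rewrite the number of vertices of $K_{4n+3}$ in the form $2m+1$; setting $m = 2n+1$ gives $4n+3 = 2(2n+1)+1$, so the lemma about graphs on an odd number of vertices applies.

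Next I would check the degree sequence condition: every vertex of $K_{4n+3}$ has degree $4n+2$, and this equals $2(2n+1)$, i.e.\ it has the form $2(2k_i+1)$ with $k_i = n$ for every $i$. Hence the degree sequence of $K_{4n+3}$ is exactly of the form $\{2(2k_i+1)\}_{i=1}^{2m+1}$ required by the lemma, and we immediately conclude that $K_{4n+3}$ is not MPF.

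The only possible subtlety (and certainly not a real obstacle) is remembering that the parameter $n$ in the statement of \Cref{K_4n+3} is not the same as the $n$ appearing in the preceding lemma; once the indices are matched up as above, the corollary is a one-line invocation.
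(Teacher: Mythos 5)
Your proposal is correct and is exactly the intended derivation: the paper presents this statement as an immediate corollary of the preceding lemma, since $K_{4n+3}$ has $2(2n+1)+1$ vertices, each of degree $4n+2 = 2(2n+1)$, which is of the required form $2(2k_i+1)$ with $k_i = n$. Your bookkeeping of the two different roles of $n$ is the only point needing care, and you handled it correctly.
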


\subsection{Complete graphs}
This subsection is dedicated to addressing the following problem:
\begin{prb}
For which values of $n$ is the complete graph $K_n$ an MPF graph?
\end{prb}

\begin{exa}\label{example 10}
    The complete graph $ K_9 $ is an MPF graph. Let $ G $ be a graph with $ g\colon V(G) \to \{0,1,\ldots,8\} $, and $ E(G) = \sqcup_{k=1}^2 E_k $, where $ E_k = \{i \,\blueedge \,(i+2k-1) \pmod{9}\mid i\in \{0,\ldots,8\}\} $, and $ H $ be a graph with  $ h\colon V(H) \to \{0,1,\ldots,8\} $, and $ E(H) = \{i\,\rededge\, (i+4) \pmod{9}\mid i\in \{0,\ldots,8\}\} $~(see~\cref{ex2}). 
    It is not hard to verify that $ H $ is a $ 
C_9 $ cycle. Then we have the following two adjacency matrices:
\begin{align*}
    A(G) = \begin{bmatrix} 
	0 & 1 & 0 & 1 & 0 & 0 & 1 & 0 & 1 \\
	1 & 0 & 1 & 0 & 1 & 0 & 0 & 1 & 0 \\
    0 & 1 & 0 & 1 & 0 & 1 & 0 & 0 & 1 \\
	1 & 0 & 1 & 0 & 1 & 0 & 1 & 0 & 0 \\
 0 & 1 & 0 & 1 & 0 & 1 & 0 & 1 & 0 \\
 0 & 0 & 1 & 0 & 1 & 0 & 1 & 0 & 1 \\
  1 & 0 & 0 & 1 & 0 & 1 & 0 & 1 & 0 \\
  0 & 1 & 0 & 0 & 1 & 0 & 1 & 0 & 1 \\
  1 & 0 & 1 & 0 & 0 & 1 & 0 & 1 & 0 \\
	\end{bmatrix},~A(H) = \begin{bmatrix} 
	0 & 0 & 0 & 0 & 1 & 1 & 0 & 0 & 0 \\
	0 & 0 & 0 & 0 & 0 & 1 & 1 & 0 & 0 \\
    0 & 0 & 0 & 0 & 0 & 0 & 1 & 1 & 0 \\
	0 & 0 & 0 & 0 & 0 & 0 & 0 & 1 & 1 \\
 1 & 0 & 0 & 0 & 0 & 0 & 0 & 0 & 1 \\
 1 & 1 & 0 & 0 & 0 & 0 & 0 & 0 & 0 \\
  0 & 1 & 1 & 0 & 0 & 0 & 0 & 0 & 0 \\
  0 & 0 & 1 & 1 & 0 & 0 & 0 & 0 & 0 \\
  0 & 0 & 0 & 1 & 1 & 0 & 0 & 0 & 0 \\
	\end{bmatrix},\\
\end{align*}
which implies that $A(G)A(H) = J_9-I_9$, and this is the adjacency matrix of $ K_9 $.

\begin{figure}[H]
    \centering
\begin{tikzpicture}[scale=0.8]
 \foreach \i in {1,...,9} {
    \node[circle,fill, inner sep=2pt] (\i) at (\i*40:2) {};} 
    \draw[ultra thick,darkblue] (1) -- (2);
    \draw[ultra thick,darkblue] (1) -- (4);
    \draw[ultra thick,darkblue] (2) -- (3);
    \draw[ultra thick,darkblue] (2) -- (5);
    \draw[ultra thick,darkblue] (3) -- (4);
    \draw[ultra thick,darkblue] (3) -- (6);
    \draw[ultra thick,darkblue] (4) -- (5);
    \draw[ultra thick,darkblue] (4) -- (7);
    \draw[ultra thick,darkblue] (5) -- (6);
    \draw[ultra thick,darkblue] (5) -- (8);
    \draw[ultra thick,darkblue] (6) -- (7);
    \draw[ultra thick,darkblue] (6) -- (9);
    \draw[ultra thick,darkblue] (7) -- (8);
    \draw[ultra thick,darkblue] (7) -- (1);
    \draw[ultra thick,darkblue] (8) -- (9);
    \draw[ultra thick,darkblue] (8) -- (2);
    \draw[ultra thick,darkblue] (9) -- (1);
    \draw[ultra thick,darkblue] (9) -- (3);

    \draw (-1.25,1.85) node {$0$};
    \draw (0.25,2.25) node {$1$};
    \draw (1.5,1.6) node {$2$};
    \draw (2.1,0.25) node {$3$};
    \draw (1.5,-1.6) node {$4$};
    \draw (0.25,-2.25) node {$5$};
    \draw (-1.2,-1.95) node {$6$};
    \draw (-2,-1) node {$7$};
    \draw (-2,0.95) node {$8$};
 \foreach \i in {1,...,9} {
    \node[circle,fill, inner sep=2pt,white] (\i) at (\i*40:2) {};
      \node[circle,fill, inner sep=1.6pt,] (\i) at (\i*40:2) {};
    }
    
\begin{scope}[xshift=6cm]
    \foreach \i in {1,...,9} {
    \node[circle,fill, inner sep=2pt] (\i) at (\i*40:2) {};} 
    \draw[ultra thick,maroon] (1) -- (2);
    \draw[ultra thick,maroon] (2) -- (3);
    \draw[ultra thick,maroon] (3) -- (4);
    \draw[ultra thick,maroon] (4) -- (5);
    \draw[ultra thick,maroon] (5) -- (6);
    \draw[ultra thick,maroon] (6) -- (7);
    \draw[ultra thick,maroon] (7) -- (8);
    \draw[ultra thick,maroon] (8) -- (9);
    \draw[ultra thick,maroon] (9) -- (1);
    
 \draw (-1.25,1.85) node {$0$};
    \draw (0.25,2.25) node {$4$};
    \draw (1.5,1.6) node {$8$};
    \draw (2.1,0.25) node {$3$};
    \draw (1.5,-1.6) node {$7$};
    \draw (0.25,-2.25) node {$2$};
    \draw (-1.2,-1.95) node {$6$};
    \draw (-2,-1) node {$1$};
    \draw (-2,0.95) node {$5$};
     \foreach \i in {1,...,9} {
        \node[circle,fill, inner sep=2pt,white] (\i) at (\i*40:2) {};
      \node[circle,fill, inner sep=1.6pt,] (\i) at (\i*40:2) {};
    }
\end{scope}
    
\end{tikzpicture}
    \caption{The left blue graph is $G$ and the right red graph is $H$}
    \label{ex2}
\end{figure}
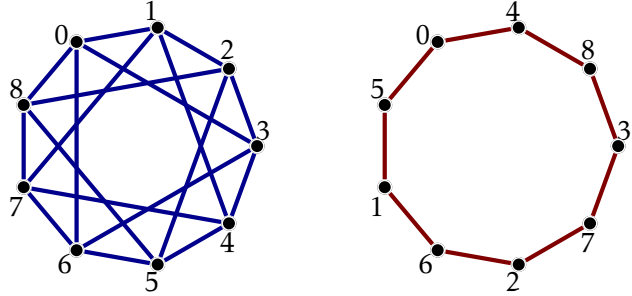
\end{exa}

\noindent In the following theorem, we generalize~\cref{example 10}, and we show that there is an infinite family of complete graphs that are MPF graphs via the product of the adjacency matrix of two regular graphs.

\begin{thm}\label{thm 12}
   A complete graph $ K_{4n+1} $ can be expressed as $\cay(\Z_{4n+1};S)_{g}\ast{}_{h}\cay(\Z_{4n+1}; R)$, where $S=\pm\{2k-1\mid k=0,\ldots,n-1\}$ and $R=\{2n,2n+1\}$.
\end{thm}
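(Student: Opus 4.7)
The plan is to compute $A(G)A(H)$ entrywise and identify it with $J-I=A(K_{4n+1})$. Because $G=\cay(\Z_{4n+1};S)$ and $H=\cay(\Z_{4n+1};R)$ are Cayley graphs on the same cyclic group, both $A(G)$ and $A(H)$ are circulant, and so $A(G)A(H)$ is also circulant; its $(i,j)$-entry depends only on $d=j-i$. Using that $A(G)_{ik}=[k-i\in S]$ and $A(H)_{kj}=[j-k\in R]$, together with $R=-R$, this entry equals the number of pairs $(s,r)\in S\times R$ with $s+r=d$. Thus it suffices to show that this count, call it $c(d)$, is $0$ when $d=0$ and $1$ otherwise. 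That single identity simultaneously certifies that $((G,g),(H,h))$ is graphical (the product is symmetric and $(0,1)$-valued), that no $2$-cycles arise, and that $G{}_{g}\ast{}_{h} H = K_{4n+1}$.

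The main step is a parity partition in the symmetric representative $\{-2n,\ldots,2n\}$ of $\Z_{4n+1}$. In this representation $S$ is exactly the set of $2n$ odd integers with absolute value at most $2n-1$, while $R=\{2n,\,2n+1\}=\{2n,\,-2n\}$ since $2n+1\equiv -2n\pmod{4n+1}$. Shifting $S$ by $+2n$ yields $\{1,3,\ldots,4n-1\}$, the odd residues in $\{1,\ldots,4n\}$; shifting $S$ by $-2n$ yields $\{-(4n-1),\ldots,-1\}$, which modulo $4n+1$ equals $\{2,4,\ldots,4n\}$, the nonzero even residues. These two blocks are disjoint and together exhaust $\Z_{4n+1}\setminus\{0\}$. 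Hence for any nonzero $d$ in the odd block the unique solution to $s+r=d$ uses $r=2n$, $s=d-2n\in S$; for $d$ in the even block the unique solution uses $r=-2n$, $s=d+2n\in S$; and for $d=0$ no solution exists. Therefore $c(d)=[d\neq 0]$, giving $A(G)A(H)=J-I$ as desired.

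The main obstacle is conceptual rather than computational: one must notice that the oddness of $4n+1$ endows the symmetric representative $\{-2n,\ldots,2n\}$ with a well-defined parity label under which negation flips parity, so that translation by $\pm 2n$ interchanges the ``odd'' and ``even'' halves of $\Z_{4n+1}\setminus\{0\}$. Once this is seen, the remainder is a short enumeration. A spectral alternative exists---diagonalize both circulants by the DFT basis and verify that their eigenvalues multiply to the spectrum $(4n,-1,\ldots,-1)$ of $J-I$ using product-to-sum identities together with $\sin((4n+1)\theta_j)=0$ for $\theta_j=2\pi j/(4n+1)$---but it demands more trigonometry for the same payoff.
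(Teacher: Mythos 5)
Your argument is correct, and it reaches the conclusion by a genuinely different route than the paper. The paper works inside its 2-edge-colouring framework: it forms the coloured union $G\oplus H$, checks it is simple, exhibits for each vertex $i$ and each generator an alternating diamond, and then runs a three-case analysis to show that diamond is unique, invoking \cref{lem 3} to conclude the pair is graphical and the product is $K_{4n+1}$. You instead exploit the fact that both factors are circulants over $\mathbb{Z}_{4n+1}$ and compute the product entrywise: the $(i,j)$-entry is the number of representations of $d=j-i$ as $s+r$ with $s\in S=\{\pm1,\pm3,\ldots,\pm(2n-1)\}$ and $r\in\{\pm 2n\}$, and your parity partition ($S+2n$ giving exactly the odd residues in $\{1,\ldots,4n\}$, $S-2n$ giving exactly the nonzero even residues) shows this count is $1$ for $d\neq 0$ and $0$ for $d=0$, i.e.\ $A(G)A(H)=J-I$. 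This buys two things: it replaces the paper's case-by-case uniqueness check with a single bijective count, and it makes explicit the covering statement (every off-diagonal entry equals $1$) that the paper leaves implicit in its regularity/diamond construction; the paper's approach, in turn, stays within the diamond-condition machinery it uses throughout. Two small remarks: your reading of $S$ as the $2n$ odd residues of absolute value at most $2n-1$ is the intended one (the statement's indexing $k=0,\ldots,n-1$ is off by one --- taken literally it yields only $2n-2$ generators and a graph that is not $2n$-regular, contradicting the degree identity $\deg_X=\deg_G\cdot\deg_H$), and your phrase ``no $2$-cycles arise'' should read ``no loops arise'': the symmetric product means every arc pairs into a $2$-cycle, which the definition of the graphical matrix product collapses to an edge; what $c(0)=0$ certifies is that the diagonal vanishes. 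Neither point affects the validity of your proof.
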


\begin{proof}
     Let $ G=\cay(\Z_{4n+1};S) $ be a simple graph with $ g\colon V(G) \to \{0,1,\ldots,4n\} $ as its labeling, and $ E(G) = \sqcup_{k=0}^{n-1} E_k $, where $ E_k = \{i \,\blueedge\, (i+2k-1) \pmod{4n+1} \mid i\in\{0,\ldots,4n\}\} $. 
     It is clear that $ G $ is $ 2n $-regular. 
     Also, let $ C_{4n+1}=\cay(\Z_{4n+1}; R) $ be a cycle with $ h\colon V(C_{4n+1}) \to \{0,1,\ldots,4n\} $, and $ E(C_{4n+1}) = \{i\,\rededge\, (i+2n) \pmod{4n+1}\mid i\in\{0,\ldots,4n\}\} $. 
     It is not hard to verify that $ C_{4n+1} $ is a cycle. 
     By the definition of $ C_{4n+1} $ and $ G $, they are loopless without multiple edges, so they are simple graphs. We want to show that $ C_{4n+1}{}_{g}\ast_{h} G $ is simple. It is clear that $ E(C_{4n+1})\cap E(G) = \emptyset $. Therefore, $ C_{4n+1}\oplus G $ is simple. So, by~\cref{lem 3}, we need to prove that $ C_{4n+1}\oplus G $ satisfies the diamond condition. Let $ E(G) $ be coloured by blue and $ 
    E(C_{4n+1}) $ be coloured by red.~(Note that all the vertices~(integers) are modulo $ 4n+1 $.)  Suppose $ i \in V(C_{4n+1}\oplus G) $, where $0\leq i\leq 4n-1$. 
    Then, there is a blue edge from $ i $ to $ i+2k-1 $ for each $  
k \in \{0,1,\ldots,n-1\} $ in $ C_{4n+1}\oplus G $. Also, we know by the definition of $ C_{4n+1} $, there are two red edges from $ i $ to $ i+2n $ and $ i-2n $ in $ C_{4n+1}\oplus G $. Clearly,
\begin{align*}
   i~\blueedge~(i+2k-1)~\rededge~(i+2k-1+2n)~\blueedge~(i+2n)~\rededge~i     
\end{align*}
is a 2-edge coloured diamond. We need to show that this is the only diamond between $ i $ and $ i+2k-1+2n $. Suppose there is another diamond that contains $ i $ and $ i+2k-1+2n $. There are three cases:\\\\
\textbf{Case1.} Assume there is a blue edge from $ i+2n $ to $ i+2k'-1+2n = i+2k-1+2n $ for some $ k' \in \{0,1,\ldots,n-1\} $, which implies that $ k = k' $.\\\\
\textbf{Case 2.} Assume that there is a blue edge from $ i-2n $ to $ i-2n+2k'-1 $, such that $ i-2n+2k'-1 = i+2k-1+2n $, which implies that $ 2k+2k'\equiv 1 \pmod{4n+1} $. This is a contradiction since $ 2k+2k'\leq 4n $.\\\\
\textbf{Case 3.} We know that there is a red edge from $ i+2k-1+2n $ to $ i+2k-2 $, and there is a blue edge from $ i+2k-2 $ to $ i+2k+2k'-3 $. If $ i = i+2k+2k'-3 $, then there are two 2-edge coloured diamond between $ i $ and $ i+2k-1+2n $. However, $ i = i+2k+2k'-3 $ implies that $ 2k+2k'-3 \equiv 0 \pmod{4n+1} $. This is a contradiction since $ 2k+2k' \leq 4n $.

\noindent Therefore, there is a 2-edge coloured between any two vertices of $ C_{4n+1}\oplus G $, it is satisfying the diamond condition in~\cref{lem 3}. Hence, $ K_{4n+1} = G{}_{g}\ast_{h}C_{4n+1}  $.    
\end{proof}

\completegraphs*

\begin{proof}
    It follows from \cref{odd-degree-vertex}, \cref{K_4n+3} and \cref{thm 12}. 
\end{proof}

\noindent The following example and theorem show that $ K_{4n+1} $ is not a unique MPF graph.

\begin{exa}
    The complete graph $ K_9 $ is an MPF graph. Let $ G = \cay(\mathbb{Z}_9;\{\pm 4,\pm 5\}) $ be a graph with $ g\colon V(G) \to \{0,1,\ldots,8\} $, and $ E(G) = \sqcup_{k=0}^1 E_k $, where $ E_k = \{i \,\blueedge \,(i+2+4k) \pmod{9}\mid i\in \{0,\ldots,8\}\} $, and $ H = \cay(\mathbb{Z}_9;\{\pm 2\}) $ be a graph with  $ h\colon V(H) \to \{0,1,\ldots,8\} $, and $ E(H) = \{i\,\rededge\, (i+2) \pmod{9}\mid i\in \{0,\ldots,8\}\} $. 
    It is not hard to verify that $ H $ is a $ 
C_9 $ cycle. Then we have the following two adjacency matrices:
\begin{align*}
    A(G) = \begin{bmatrix} 
	0 & 0 & 0 & 1 & 1 & 1 & 1 & 0 & 0 \\
	0 & 0 & 0 & 0 & 1 & 1 & 1 & 1 & 0 \\
    0 & 0 & 0 & 0 & 0 & 1 & 1 & 1 & 1 \\
	1 & 0 & 0 & 0 & 0 & 0 & 1 & 1 & 1 \\
 1 & 1 & 0 & 0 & 0 & 0 & 0 & 1 & 1 \\
 1 & 1 & 1 & 0 & 0 & 0 & 0 & 0 & 1 \\
  1 & 1 & 1 & 1 & 0 & 0 & 0 & 0 & 0 \\
  0 & 1 & 1 & 1 & 1 & 0 & 0 & 0 & 0 \\
  0 & 0 & 1 & 1 & 1 & 1 & 0 & 0 & 0 \\
	\end{bmatrix},~A(H) = \begin{bmatrix} 
	0 & 0 & 1 & 0 & 0 & 0 & 0 & 1 & 0 \\
	0 & 0 & 0 & 1 & 0 & 0 & 0 & 0 & 1 \\
    1 & 0 & 0 & 0 & 1 & 0 & 0 & 0 & 0 \\
	0 & 1 & 0 & 0 & 0 & 1 & 0 & 0 & 0 \\
 0 & 0 & 1 & 0 & 0 & 0 & 1 & 0 & 0 \\
 0 & 0 & 0 & 1 & 0 & 0 & 0 & 1 & 0 \\
  0 & 0 & 0 & 0 & 1 & 0 & 0 & 0 & 1 \\
  1 & 0 & 0 & 0 & 0 & 1 & 0 & 0 & 0 \\
  0 & 1 & 0 & 0 & 0 & 0 & 1 & 0 & 0 \\
	\end{bmatrix},\\
\end{align*}
which implies that $A(G)A(H) = J_9-I_9$, and this is the adjacency matrix of $ K_9 $.
\end{exa}

\begin{figure}[H]
    \centering
\begin{tikzpicture}[scale=0.8]
 \foreach \i in {1,...,9} {
    \node[circle,fill, inner sep=2pt] (\i) at (\i*40:2) {};} 
    \draw[ultra thick,darkblue] (1) -- (4);
    \draw[ultra thick,darkblue] (1) -- (5);
    \draw[ultra thick,darkblue] (1) -- (6);
    \draw[ultra thick,darkblue] (1) -- (7);
    \draw[ultra thick,darkblue] (2) -- (5);
    \draw[ultra thick,darkblue] (2) -- (6);
    \draw[ultra thick,darkblue] (2) -- (7);
     \draw[ultra thick,darkblue] (2) -- (8);
      \draw[ultra thick,darkblue] (3) -- (6);
      \draw[ultra thick,darkblue] (3) -- (7);
      \draw[ultra thick,darkblue] (3) -- (8);
      \draw[ultra thick,darkblue] (3) -- (9);
      \draw[ultra thick,darkblue] (4) -- (7);
      \draw[ultra thick,darkblue] (4) -- (8);
      \draw[ultra thick,darkblue] (4) -- (9);
      \draw[ultra thick,darkblue] (4) -- (1);

      \draw[ultra thick,darkblue] (5) -- (8);
      \draw[ultra thick,darkblue] (5) -- (9);
      \draw[ultra thick,darkblue] (6) -- (9);
    
    \draw (-1.25,1.85) node {$0$};
    \draw (0.25,2.25) node {$1$};
    \draw (1.5,1.6) node {$2$};
    \draw (2.1,0.25) node {$3$};
    \draw (1.5,-1.6) node {$4$};
    \draw (0.25,-2.25) node {$5$};
    \draw (-1.2,-1.95) node {$6$};
    \draw (-2,-1) node {$7$};
    \draw (-2,0.95) node {$8$};
 \foreach \i in {1,...,9} {
    \node[circle,fill, inner sep=2pt,white] (\i) at (\i*40:2) {};
      \node[circle,fill, inner sep=1.6pt,] (\i) at (\i*40:2) {};
    }
    
\begin{scope}[xshift=6cm]
    \foreach \i in {1,...,9} {
    \node[circle,fill, inner sep=2pt] (\i) at (\i*40:2) {};} 
    \draw[ultra thick,maroon] (1) -- (2);
    \draw[ultra thick,maroon] (2) -- (3);
    \draw[ultra thick,maroon] (3) -- (4);
    \draw[ultra thick,maroon] (4) -- (5);
    \draw[ultra thick,maroon] (5) -- (6);
    \draw[ultra thick,maroon] (6) -- (7);
    \draw[ultra thick,maroon] (7) -- (8);
    \draw[ultra thick,maroon] (8) -- (9);
    \draw[ultra thick,maroon] (9) -- (1);
    
 \draw (-1.25,1.85) node {$0$};
    \draw (0.25,2.25) node {$2$};
    \draw (1.5,1.6) node {$4$};
    \draw (2.1,0.25) node {$6$};
    \draw (1.5,-1.6) node {$8$};
    \draw (0.25,-2.25) node {$1$};
    \draw (-1.2,-1.95) node {$3$};
    \draw (-2,-1) node {$5$};
    \draw (-2,0.95) node {$7$};
     \foreach \i in {1,...,9} {
        \node[circle,fill, inner sep=2pt,white] (\i) at (\i*40:2) {};
      \node[circle,fill, inner sep=1.6pt,] (\i) at (\i*40:2) {};
    }
\end{scope}
    
\end{tikzpicture}
    \caption{The left blue graph is $G$ and the right red graph is $H$}
\end{figure}
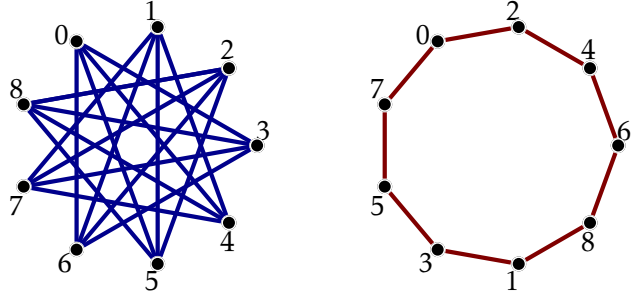

\begin{thm}
Let $n$ be even. Then  $K_{4n+1}$ can be expressed as $\cay(\mathbb Z_{4n+1};S){}_{g}\ast{}_{h}  \cay(\mathbb Z_{4n+1};R)$, where $S=\{\pm 2n,\pm(2n-1) \}$ and $R=\{\pm(2+4k)\mid k=0,\ldots,\frac{n}{2}-1\}$.
\end{thm}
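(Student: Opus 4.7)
The plan is to prove the identity $A(G)A(H)=J-I$ directly via a sumset argument, exploiting the fact that both factors are circulant matrices over $\mathbb Z_{4n+1}$. Take both labelings $g$ and $h$ to be the identity on $\mathbb Z_{4n+1}$. Then the $(0,j)$ entry of $A(G)A(H)$ is
$$\bigl(A(G)A(H)\bigr)_{0,j}=\#\bigl\{(s,r)\in S\times R\,:\, s+r\equiv j\pmod{4n+1}\bigr\},$$
and since both matrices are circulant, every row of $A(G)A(H)$ is a cyclic shift of row $0$. Hence to prove $A(G)A(H)=J-I$ it suffices to show that the map $S\times R\to\mathbb Z_{4n+1}$ sending $(s,r)\mapsto s+r$ is a bijection onto $\{1,2,\ldots,4n\}$, i.e.\ that $S$ and $R$ together form a perfect covering (a ``factorization'' of the non-zero residues).

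First I would reduce the generators modulo $4n+1$. The key observation is that $S=\{\pm(2n-1),\pm 2n\}$ reduces to the four \emph{consecutive} residues $\{2n-1,\,2n,\,2n+1,\,2n+2\}$, because $-(2n-1)\equiv 2n+2$ and $-2n\equiv 2n+1$. For $R$, the positive representatives are $\{2,6,10,\ldots,2n-2\}$, which are exactly the $n/2$ residues congruent to $2\pmod 4$ in $[2,2n-2]$, and the negative representatives reduce to $\{2n+3,2n+7,\ldots,4n-1\}$, the $n/2$ residues congruent to $3\pmod 4$ in $[2n+3,4n-1]$ (this uses that $n$ is even so $2n\equiv 0\pmod 4$). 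Hence $|S|\cdot|R|=4\cdot n=4n$, which matches $|\mathbb Z_{4n+1}\setminus\{0\}|$, so an exact covering is numerically possible.

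Next I would carry out the tiling verification. Because $S$ consists of four consecutive integers, for each fixed $r\in R$ the set $r+S$ is a block of four consecutive residues. For $r\in\{2,6,\ldots,2n-2\}$, writing $r=4k+2$ with $0\le k\le n/2-1$ yields
$$r+S=\{2n+4k+1,\,2n+4k+2,\,2n+4k+3,\,2n+4k+4\},$$
and as $k$ ranges over $\{0,\ldots,n/2-1\}$ these blocks partition $\{2n+1,2n+2,\ldots,4n\}$. For $r\in\{2n+3,2n+7,\ldots,4n-1\}$, writing $r=2n+3+4k$ and reducing mod $4n+1$ yields
$$r+S=\{4k+1,\,4k+2,\,4k+3,\,4k+4\},$$
which as $k$ ranges over $\{0,\ldots,n/2-1\}$ partitions $\{1,2,\ldots,2n\}$. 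Combining, the union $\bigcup_{r\in R}(r+S)$ is a disjoint partition of $\{1,2,\ldots,4n\}$, so the map $(s,r)\mapsto s+r$ is indeed a bijection onto the non-zero residues. This establishes $A(G)A(H)=J-I=A(K_{4n+1})$.

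Finally I would clean up the graph-theoretic hypotheses. Both $G$ and $H$ are simple loopless circulant graphs since $0\notin S\cup R$ and the connection sets are genuinely of sizes $4$ and $n$; moreover $S\cap R=\emptyset$ (the elements of $R$ are even but neither $\pm 2n$ lies in $R$ as its positive half tops out at $2n-2$, and $\pm(2n-1)$ is odd), so $G\oplus H$ is a simple graph. Because the product $A(G)A(H)=J-I$ is already a symmetric $(0,1)$-matrix, the pair $((G,g),(H,h))$ is automatically graphical and $G{}_{g}\!\ast{}_{h} H$ coincides with $G{}_{g}\!\cdot{}_{h} H=K_{4n+1}$; the diamond-condition route via \cref{lem 3} could be invoked alternatively but the direct identification is cleaner. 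The only delicate point, and the main bookkeeping obstacle, is keeping the two mod-$4$ classes inside $R$ straight and confirming that the induced blocks from $r+S$ tile the two halves $[1,2n]$ and $[2n+1,4n]$ with no overlap and no gap; the parity assumption ``$n$ even'' is exactly what makes $|R|=n$ an even integer with $n/2$ generators of each sign, so that the two halves each receive precisely $n/2$ consecutive-quadruples.
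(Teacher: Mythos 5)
Your proof is correct, but it follows a genuinely different route from the paper. The paper keeps the edge-colouring picture throughout: it first checks that $G\oplus H$ is simple and then verifies the diamond condition of \cref{lem 3} by a lengthy case analysis (four subcases of Case~1 are written out, the remaining cases being declared analogous). You instead exploit the circulant structure directly: since both factors are Cayley graphs on $\mathbb Z_{4n+1}$, the $(i,j)$ entry of $A(G)A(H)$ counts representations $j-i\equiv s+r$ with $(s,r)\in S\times R$, so the whole theorem reduces to showing that $S+R$ tiles $\mathbb Z_{4n+1}\setminus\{0\}$ with unique representation. Your reduction of $S$ to the four consecutive residues $\{2n-1,2n,2n+1,2n+2\}$ and the verification that the translates $r+S$ for the positive representatives $\{2,6,\ldots,2n-2\}$ partition $[2n+1,4n]$ while those for the negative representatives $\{2n+3,2n+7,\ldots,4n-1\}$ partition $[1,2n]$ are both accurate (the uniform wraparound in the second family is legitimate because all four entries of each block exceed $4n+1$ before reduction), and the count $|S|\cdot|R|=4n$ then forces exactly one representation of each nonzero residue and none of $0$, giving $A(G)A(H)=J-I$ outright; symmetry and $(0,1)$-ness of the product make the pair graphical without invoking the diamond condition. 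What each approach buys: the paper's argument showcases the diamond-condition machinery but is long and leaves several cases to the reader, whereas your sumset/tiling argument is shorter, fully explicit, and self-contained, and it makes transparent where the hypothesis that $n$ is even enters (namely, $R$ splits into $n/2$ generators of each sign so that each half-interval receives exactly $n/2$ consecutive quadruples). One minor point: your parenthetical disjointness check of $S$ and $R$ should be read modulo $4n+1$ (e.g.\ one also needs $2n\not\equiv-(2+4k)$, which holds since $2n+2+4k\le 4n<4n+1$), but this is exactly the comparison of representative ranges you already set up, so the conclusion stands.
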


\begin{proof}
Let $G=\cay(\mathbb Z_{4n+1};\pm 2,\pm 6,\ldots, \pm(2+2n))$. 
Notice that $V(G)=\{0,\ldots, 4n\}$, and we color all edges of $G$ in blue. The edges $E(G)$ are composed of $\cup_{k=0}^{(n-2)/2}G_k$, where 
\[
G_k=\{j \blueedge j+2+4k, j \blueedge j-2-4k \mid j=0,\ldots 4n\}, \pmod{4n+1}
\]

\noindent Let $H=\cay(\mathbb Z_{4n+1};\pm 2n,\pm(2n-1))$.
We note that $V(H)=\{0,\ldots, 4n\}$, and all edges of $H$ are colored in red, where
\[
E(H)=\{i\rededge i+2n, i \rededge i-2n \mid i=0,\ldots 4n\} \pmod{4n+1}
\]

\noindent Firstly, we show that $G\oplus H$ is simple, which means there are no multiple edges in $G\oplus H$.
Both $G$ and $H$ are simple graphs. 
Assume, to the contrary, that $u \rededge v$ and $u \blueedge v$.
Let $u=i$, where $i\in\{0,\ldots,4n\}$. 
We consider the following cases:

\noindent \textbf{Case 1.} Let $v=i+2n$. 
Assume $v=i+2+4\ell$ for $\ell \in \{0,\ldots,\frac{n}{2}-1\}$.
We have $2n\equiv 2+4\ell \pmod{4n+1}$, implying $n= 1+2\ell$. This leads to a contradiction, as $n$ is even.
Next, assume $v=i-2-4\ell$. 
Similarly, $-n\equiv 1+2\ell \pmod{4n+1}$, meaning $3n=2\ell$, which again leads to a contradiction as $3n=2\ell\leq n-2$.

\noindent \textbf{Case 2.} Let $v=i-2n$.
Assume $v=i+2+4\ell$ for $\ell \in \{0,\ldots,\frac{n}{2}-1\}$.
We have $-2n\equiv 2+4\ell \pmod{4n+1}$, implying $3n= 2\ell$, which leads to a contradiction as $3n=2\ell\leq n$. 
Next, assume $v=i-2-4\ell$. 
Similarly, $-n\equiv -1-2\ell \pmod{4n+1}$, meaning $n=1+2\ell$, which again leads to a contradiction as $n$ is even.

\noindent Next, we aim to demonstrate that $G\oplus H$ satisfies the diamond condition. 
Suppose we have $u \rededge v\blueedge w \rededge z \blueedge u$.
Let's analyze the following cases:
\begin{case} 
Let $u=i$, $v=i+2n$, and $w=i+2n+2+4k$, which implies $i \rededge i+2n\blueedge i+2n+2+4k$ for some $ k\in \{0,1,\ldots,n/2-1\}$. We can break this down into $4$ subcases:\\\\
\noindent \textbf{Subcase 1.1.} Clearly, $(i+2n+2+4k)\rededge(i+4n+2+4k)$. 
Assume $(i+4n+2+4k) \blueedge(i+4n+2+4k+2+4k')$ for some $ k'\in\{0,1,\ldots,n/2-1\}$ such that $ i+4n+2+4k+2+4k' \equiv i \pmod{4n+1} $. This implies $4(k+k')+3 \equiv 0 \pmod{4n+1}$. 
Observing that $4n+1\leq 4(k+k')+3$ and considering the maximum value of $k+k'\leq n-2$, we find that $4n+1\leq 4(k+k')+3\leq 4(n-2)+3=4n-5$, which leads to a contradiction.\\

\noindent We may now assume $ (i+4n+2+4k)\blueedge(i+4n+2+4k-2-4k'') $ for some $ k''\in\{0,1,\ldots,n/2-1\} $, such that $ i+4n+2+4k-2-4k'' \equiv i \pmod{4n+1} $. This implies that $ n+k-k'' \equiv 0 \pmod{4n+1} $. Note that $ 4n+1 \leq n+k-k'' $, considering the maximum value of $ k-k'' \leq n/2-1 $, we see that $ 4n+1 \leq n+k-k'' \leq (3n)/2-1 $, which is a contradiction. 
\\\\
\noindent \textbf{Subcase 1.2.} Similarly, $(i+2n+2+4n)\rededge(i+2+4k)$. Assume $(i+2+4k)\blueedge(i+2+4k+2+4k')$ for some $ k'\in\{0,1,\ldots,n/2-1\}$ such that $ i+2+4k+2+4k' \equiv i \pmod{4n+1} $. This implies $4(1+k+k') \equiv 0 \pmod{4n+1}$. Observing that $4n+1\leq 4(k+k'+1)$ and considering the maximum value of $k+k'\leq n-2$, we find that $4n+1\leq 4(k+k'+1)\leq 4(n-2+1)=4n-4$, which leads to a contradiction.\\

\noindent Now we may assume that $ (i+2+4k)\blueedge(i+2+4k-2-4k'') $, for some $ k'' \in\{0,1,\ldots,n/2-1\} $, such that $ i+2+4k-2-4k'' \equiv i \pmod{4n+1} $. This implies that $ k-k''\equiv 0 \pmod{4n+1} $. Hence, $ k=k'' $, and we have the following colored diamond:
$$ i\rededge (i+2n)\blueedge i+2n+2+4k \rededge i+2+4k \blueedge i+2+4k-2-4k'' = i. $$\\
\noindent \textbf{Subcase 1.3.} Likewise, $(i+2n+2+4k)\rededge(i+4k)$. Assume $(i+4k)\blueedge(i+4k+2+4k')$, for some $ k'\in\{0,1,\ldots,n/2-1\}$ such that $(i+4k+2+4k') \equiv i \pmod{4n+1}$. 
Observing that $4n+1\leq 4(k+k')+2$, considering the maximum value of $k+k'\leq n-2$, we find that $4n+1\leq 4(k+k')+2\leq 4(n-2)+2=4n-6$, which leads to a contradiction.\\

\noindent Now we may assume that $ (i+4k)\blueedge (i+4k-2-4k'') $ for some $ k'' \in\{0,1,\ldots,n/2-1\} $, such that $ i+4k-2-4k'' \equiv i \pmod{4n+1} $. This implies that $ 2k-1-2k'' \equiv 0 \pmod{4n+1} $. Note that $ 4n+1 \leq 2(k-k'')-1 \leq n-3 $, which is a contradiction.
\\\\
\noindent \textbf{Subcase 1.4.} Lastly, $(i+2n+2+4k)\rededge(i+4k+3)$. Assume $(i+4k+3)\blueedge(i+4k+3+2+4k')$, for some $ k'\in\{0,1,\ldots,n/2-1\}$ such that $(i+4k+3+2+4k')\equiv i \pmod{4n+1}$. 
This implies $4(k+k'+1)+1\equiv 0 \pmod{4n+1}$, which implies that $ n+k+k'+1 \equiv 0 \pmod{4n+1} $. Hence, $ 4n+1 \leq n+(k+k')+1 \leq 2n-1 $, which leads to a contradiction.\\ 

\noindent Now we may assume that $ (i+4k+3) \blueedge (i+4k+3-2-4k'') $ for some $k'' \in\{0,1,\ldots,n/2-1\} $, such that $ i+4k+3-2-4k'' \equiv i \pmod{4n+1} $. This implies that $ 4(k-k'')+1 \equiv 0 \pmod{4n+1} $, hence, $ n+k''-k \equiv 0\pmod{4n+1} $, thus, $ 4n+1 \leq n+k''-k \leq (3n)/2-1 $, a contradiction.
\end{case}
\noindent The other cases can be analyzed analogously to the preceding one, following a parallel method of reasoning.
\end{proof}

\begin{conj}
    Let $ K_{4n+1} $ be a complete graph on $ 4n+1 $ vertices, where $ 4n = m_1\cdot m_2 $ and both $ 
m_1 $ and $ m_2 $ are even. Then there is a graphical pair $ ((G,g),(H,h)) $ without loops where $G$ and $H$ are $m_1$-regular and $m_2$-regular graphs, such that $ 
K_{4n+1} = G_g\ast{}_h H $.
\end{conj}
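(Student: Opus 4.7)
The plan is to realize $K_{4n+1}$ as the graphical matrix product of two Cayley graphs on $\mathbb{Z}_{4n+1}$, in direct generalization of \cref{thm 12} and the theorem following it. Write $m_1 = 2a$ and $m_2 = 2b$, so that $4n = 4ab$, and take
\[
G = \cay(\mathbb{Z}_{4ab+1}; S), \qquad H = \cay(\mathbb{Z}_{4ab+1}; R),
\]
with $S = \{\pm(2i-1) : 1 \le i \le a\}$ and $R = 2a \cdot \{\pm(2j-1) : 1 \le j \le b\}$; both $g$ and $h$ are the identity labeling on $\{0, 1, \ldots, 4n\}$. By construction $G$ is $2a = m_1$-regular and $H$ is $2b = m_2$-regular.

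First I would dispose of the routine combinatorial checks: $S$ and $R$ are symmetric, avoid $0$, have cardinalities $2a$ and $2b$, and are disjoint. Distinctness in $S$ is immediate since differences of its integer lifts have absolute value at most $4a - 2 < 4ab+1$; distinctness in $R$ uses $\gcd(2a, 4ab+1) = 1$; and disjointness follows from $|s - r| \le 4ab - 1 < 4ab + 1$ together with $|s| \le 2a - 1 < 2a \le |r|$. Consequently $G$, $H$, and $G \oplus H$ are simple loopless graphs.

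The core step is to prove $A(G)A(H) = J - I$. By translation invariance of the Cayley construction, this amounts to showing every nonzero $d \in \mathbb{Z}_{4ab+1}$ has exactly one representation $d \equiv s + r$ with $s \in S$, $r \in R$, and that $0$ has none. The key parity observation is that each $s$ is an odd integer with $|s| \le 2a - 1$, while each $r$ is an integer multiple of $2a$ with $|r| \le 2a(2b-1) = 4ab - 2a$; so every integer sum $s + r$ is odd and lies in $[-(4ab-1),\, 4ab-1]$. For fixed positive $r = 2a(2j-1)$, the translated set $r + S$ consists of exactly the $2a$ odd integers in the interval $[(4j-4)a + 1,\, 4ja - 1]$. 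As $j$ ranges over $\{1, \ldots, b\}$ these intervals are pairwise disjoint (the gaps lie at the even points $4a, 8a, \ldots, (4b-4)a$, which contain no odd integers), and their odd integers union to $\{1, 3, 5, \ldots, 4ab - 1\}$. By the $r \mapsto -r$ symmetry the negative sums partition $\{-(4ab-1), -(4ab-3), \ldots, -1\}$, which after reduction modulo $4ab+1$ become the even residues $\{2, 4, \ldots, 4ab\}$. Together these give all of $\mathbb{Z}_{4ab+1} \setminus \{0\}$ with multiplicity one, matching $|S| \cdot |R| = 4ab$.

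The main obstacle is exactly this parity-based interval tiling. It relies crucially on \emph{both} $m_1$ and $m_2$ being even: evenness of $m_1$ is what makes $S$ the collection of $2a$ odd integers in $[-(2a-1),\, 2a-1]$, and evenness of $m_2$ is what makes $R$ a symmetric set of $2b$ odd multiples of $2a$, so that the partition of $\mathbb{Z}_{4ab+1} \setminus \{0\}$ into odd and even residues aligns perfectly with the sign of $s+r$. With the sumset computation in hand, $A(G)A(H) = J - I = A(K_{4n+1})$, so $((G,g),(H,h))$ is a graphical pair with $G{}_{g} \ast_{h} H = K_{4n+1}$, establishing the conjecture.
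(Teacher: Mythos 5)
This statement is left as an open conjecture in the paper (only the two special cases $(m_1,m_2)=(2n,2)$ in \cref{thm 12} and $(m_1,m_2)=(4,n)$ with $n$ even in the unlabelled theorem that follows are proved there), so there is no paper proof to match yours against; on its own merits your argument appears to be a correct and complete proof of the full conjecture. The key points all check out: with $m_1=2a$, $m_2=2b$, the circulant structure reduces $A(G)A(H)=J-I$ to showing that every nonzero residue $d\in\mathbb{Z}_{4ab+1}$ has exactly one representation $d\equiv s+r$ with $s\in S=\{\pm(2i-1):1\le i\le a\}$, $r\in R=2a\{\pm(2j-1):1\le j\le b\}$, and $0$ has none. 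Your parity/interval tiling does this cleanly: for positive $r=2a(2j-1)$ the integer sums $r+S$ are exactly the $2a$ odd integers in $[(4j-4)a+1,\,4ja-1]$, these intervals tile the odd integers of $[1,4ab-1]$ without repetition as $j$ runs over $\{1,\dots,b\}$, and by the symmetry $S=-S$, $R=-R$ the negative-$r$ sums are their negatives, which reduce modulo $4ab+1$ to the even residues $\{2,\dots,4ab\}$; since all $|s+r|\le 4ab-1$, no sum is $\equiv 0$, and the count $|S||R|=4ab$ forces multiplicity exactly one everywhere. The preliminary checks (distinctness of the connection sets via $\gcd(2a,4ab+1)=1$, disjointness of $S$ and $R$ from the size bound $|s|<2a\le|r|$ with $|s-r|<4ab+1$) are also sound, so $G\oplus H$ is simple, the pair is graphical, and $G$, $H$ are $m_1$- and $m_2$-regular as required.

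Compared with the paper's approach in \cref{thm 12} and its sequel, which verify the diamond condition of \cref{lem 3} by an explicit case analysis of alternating $4$-cycles, your route works directly with the algebraic identity $A(G)A(H)=J-I$ via a sumset (perfect difference-type) computation; this is both shorter and strictly more general, recovering the paper's two constructions as the cases $b=1$ and $a=2$ and settling the conjecture for every even factorization $4n=m_1m_2$. The one presentational point worth tightening when writing this up: make explicit that congruence of two sums modulo $4ab+1$ forces equality of the integer sums (their difference is even and of absolute value at most $8ab-2$, hence cannot be $\pm(4ab+1)$), or equivalently lean, as you implicitly do, on the exact count of $2ab$ odd and $2ab$ even residues being exhausted; either phrasing closes the argument.
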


\subsection{Complete Multi-partite Graphs}
Another known family of graphs that we can discuss whether it is an MPF graph or not, is the complete $ m $-partite graph $ K_{n_1,\ldots,n_m} $.

\begin{prop}\label{K_mn-odd}
    If either $m$ or $n$ is an odd positive integer,  
    then $K_{m,n}$ is not MPF.
\end{prop}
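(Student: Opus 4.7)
I split into cases according to the parity of $n$. Without loss of generality assume $m$ is odd. If $n$ is also odd, then $|E(K_{m,n})|=mn$ is odd and \Cref{odd-edges} immediately gives that $K_{m,n}$ is not MPF. So the remaining case is $m$ odd and $n$ even, and I suppose for contradiction that $K_{m,n}=G{}_g\ast_h H$ for some graphical pair of simple graphs, with parts $A,B$ of sizes $m,n$. By \Cref{thm 4}, $\deg_G(v)\deg_H(v)=\deg_{K_{m,n}}(v)$ for every $v$; in particular every $v\in B$ has both $\deg_G(v)$ and $\deg_H(v)$ odd (their product is $m$).

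The heart of the argument is to force the following structural dichotomy: either $H$ is bipartite with parts $A,B$ and $G$ is a vertex-disjoint union of a graph on $A$ and a graph on $B$, or the same with the roles of $G$ and $H$ swapped. To obtain this I double count entries of $A(G)A(H)$. Since $K_{m,n}$ has no edges inside $A$, every pair of distinct $u,v\in A$ satisfies $(A(G)A(H))_{uv}=|N_G(u)\cap N_H(v)|=0$. Summing over ordered pairs $(u,v)\in A\times A$ with $u\neq v$, together with the no-loop identity $N_G(w)\cap N_H(w)=\emptyset$ from \Cref{lem 3}, yields
\[
\sum_{w}|N_G(w)\cap A|\cdot|N_H(w)\cap A|=0
\]
whenever $m\geq 2$; an identical identity with $A$ replaced by $B$ holds when $n\geq 2$. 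In the degenerate case $m=1$, applying just the $B$-side identity to the centre $u$—whose neighbours in $G$ and $H$ all lie in $B$—gives $\deg_G(u)\deg_H(u)=0$, contradicting $\deg_G(u)\deg_H(u)=n>0$. When $m\geq 3$, both identities force each vertex $w$ into Type~I ($N_G(w)\subseteq B$, $N_H(w)\subseteq A$) or Type~II ($N_G(w)\subseteq A$, $N_H(w)\subseteq B$). Setting $A_i=A\cap T_i$ and $B_i=B\cap T_i$, a short case check on the two endpoints of an edge shows that $G$-edges are confined to $\binom{B_1}{2}\cup(A_1\times B_2)\cup\binom{A_2}{2}$ and $H$-edges to $\binom{A_1}{2}\cup(A_2\times B_1)\cup\binom{B_2}{2}$. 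A further double count
\[
|A_1|\cdot|B_2|=\sum_{(u,v)\in A_1\times B_2}(A(G)A(H))_{uv}=\sum_{w\in B_2}\deg_G(w)\deg_H(w)=m\cdot|B_2|
\]
(using \Cref{thm 4} for $w\in B_2$) then forces $|B_2|=0$ or $|A_1|=m$; combined with the observation that an $A_1$-vertex has its $G$-neighbourhood in $B_2$ and vice versa, either alternative collapses the partition to the claimed dichotomy.

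After possibly interchanging $G$ and $H$, I may thus assume $H$ is bipartite with parts $A,B$ and $G=G_A\sqcup G_B$. Every $u\in A$ has $\deg_H(u)>0$, so its $H$-component contains some $v\in B$; by \Cref{deg}, $\deg_G(u)=\deg_G(v)$, which is odd. Hence every vertex of the graph $G_A$ on $m$ vertices has odd degree, and the handshake lemma forces $m$ to be even, contradicting the standing assumption. The main obstacle is executing the Type~I/II structural analysis cleanly—in particular verifying the edge-type restrictions and the collapse of the partition from the $|A_1|\cdot|B_2|=m\cdot|B_2|$ identity. Once the bipartite structure of $H$ is extracted, the handshake step is immediate.
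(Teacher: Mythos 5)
Your proof is correct, but it takes a genuinely different route from the paper's. The paper fixes the block labeling of $A(K_{m,n})$ and argues locally at a single vertex $x$ of one part: comparing rows of $A(G)A(H)=A(H)A(G)$ in block form forces $N_G(x)$ to lie entirely inside one of the two parts, and then a sub-block $B$ of the other factor is shown to satisfy $B\mathbf{1}=\mathbf{1}$, i.e.\ it is a perfect matching, so the corresponding neighbourhood size is even while it must divide the odd part size by \Cref{thm 4} --- a divisibility contradiction. You instead extract a global structure theorem by double counting: since the two sides are independent sets and the product has no loops, $\sum_w |N_G(w)\cap A|\cdot|N_H(w)\cap A|=0$ (and likewise for $B$), so every vertex is of Type~I or Type~II; your second count $|A_1|\,|B_2|=m\,|B_2|$ then collapses the configuration to ``one factor is bipartite between the parts, the other is a disjoint union of a graph on $A$ and a graph on $B$,'' after which \Cref{deg} transfers the odd $G$-degrees of the $B$-side onto $A$ and the handshake lemma in $G_A$ contradicts $m$ being odd. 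Your route needs the two small side cases that the paper's uniform argument avoids (both $m,n$ odd, handled by \Cref{odd-edges}, and $m=1$), but it buys a cleaner structural description of an arbitrary factorization of $K_{m,n}$ and replaces the block-matrix bookkeeping and the perfect-matching/divisibility step by an elementary parity argument; I verified the edge-type restrictions and both collapses, and they check out. One point worth stating explicitly when you write this up: the elimination step in the Type~I/II dichotomy and both collapses use $\deg_G(w)>0$ and $\deg_H(w)>0$ for \emph{every} vertex $w$, which follows from \Cref{thm 4} because $K_{m,n}$ has no isolated vertices; also note that \Cref{deg} applies with the roles of $G$ and $H$ interchanged since the pair commutes, which is exactly how you use it after the relabeling.
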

\begin{proof}
Without loss of generality we assume that $n$ is odd.  
By \cref{anylabel} we can assume that  
$A=\begin{bmatrix}
        \bigzero_{m} & J_{m,n} \\
        J_{n,m} & \bigzero_n
    \end{bmatrix}$ 
is the adjacency matrix of $K_{m,n}$, and assume that there exist graphs $G$, and $ H $ with adjacency matrices $A(G)$, and $A(H)$, respectively such that $A=A(G)A(H)$. 
Let $V_1=\{1,\ldots,m\}$ and $V_2=\{m+1,\ldots,m+n\}$, and let $x\in V_1$ with $\ell=|N_{G}(x)\cap V_1|$ and $\ell'=|N_{G}(x)\cap V_2|$. By suitably permuting the vertices $ V_1 $ and $ V_2 $ in $ G $, we may assume $ A(G)^{\ast} = P^{-1}A(G)P $ has the following form~($ P $ is a permutation matrix):
$$
A(G)^{\ast} = \left[\begin{array}{c|c|c|c|c}
0 & {\bf 1}_{\ell}^\top & {\bf 0}_{m-\ell-1}^\top & {\bf 1}_{\ell'}^\top & {\bf 0}_{n-\ell'}^\top \\
\hline
{\bf 1}_{\ell} & * & * & * & * \\
\hline
{\bf 0}_{m-\ell-1} & * & * & * & * \\
\hline
{\bf 1}_{\ell'} & * & * & * & * \\
\hline
{\bf 0}_{n-\ell'} & * & * & * & * \\
\end{array}\right]. 
$$ 
Using the same permutation matrix $ P $, we relabel the graph $ H $ to get $ A(H)^{\ast} = P^{-1}A(H)P $. We set 
$$
A(H)^{\ast}=\left[\begin{array}{c|c|c|c|c}
0 & u_1^\top & u_2^\top & u_3^\top & u_4^\top \\
\hline
* & * & * & * & * \\
\hline
* & * & * & * & * \\
\hline
* & * & * & * & * \\
\hline
* & * & * & * & * \\
\end{array}\right].
$$
Since the permutation $ P $ permutes the vertices $ V_1 $ among themselves and similarly, it permutes the vertices $ V_2 $ among themselves, we have $A= P^{-1}AP =A(G)^{\ast}A(H)^{\ast}$. 
Comparing the $(2,i)$-entry of $A=A(G)^{\ast}A(H)^{\ast}$ for $i\in\{2,\ldots,m\}$, we have 
$u_1^\top=u_2^\top={\bf 0}^\top$ provided that $\ell>0$.
Similarly we have $u_3^\top=u_4^\top={\bf 0}^\top$ provided that $\ell'>0$.

If both $\ell$ and $\ell'$ are positive, then the first row of $A(H)^{\ast}$ is the zero vector, which contradicts $A=A(G)^{\ast}A(H)^{\ast}$. 
Therefore,  for each $x\in V_1$, either $\ell=0$ or $\ell'=0$. 

Take $x\coloneqq 1\in V_1$. 
\begin{enumerate}
    \item 
First, we assume $\ell'=0$. 
Then we may assume that $A(G)$ has the following form:
$$
A(G)=\left[\begin{array}{c|c|c|c}
0 & {\bf 1}_{\ell}^\top & {\bf 0}_{m-\ell-1}^\top & {\bf 0}_{n}^\top  \\
\hline
{\bf 1}_{\ell} & * & * & *  \\
\hline
{\bf 0}_{m-\ell-1} & * & * & *  \\
\hline
{\bf 0}_{n} & * & * & *  \\
\end{array}\right]. 
$$
By $A=A(G)A(H)$, assume that $A(H)$ has the following form:  
$$
A(H)=\left[\begin{array}{c|c|c|c|c}
0 & {\bf 0}_{\ell}^\top & {\bf 0}_{m-\ell-1}^\top & {\bf 1}_{\ell''}^\top & {\bf 0}_{n-\ell''}^\top \\
\hline
{\bf 0}_{\ell} & * & * & * & * \\
\hline
{\bf 0}_{m-\ell-1} & * & * & * & * \\
\hline
{\bf 1}_{\ell''} & * & * & * & * \\
\hline
{\bf 0}_{n-\ell''} & * & * & * & * \\
\end{array}\right], 
$$
where $\ell''\coloneqq |N_{H}(1)\cap V_2|$. 
Then according to this block form, $A(G)$ can be partitioned into the following form: 

$$
A(G)=\left[\begin{array}{c|c|c|c|c}
0 & {\bf 1}_{\ell}^\top & {\bf 0}_{m-\ell-1}^\top & {\bf 0}_{\ell''}^\top & {\bf 0}_{n-\ell''}^\top \\
\hline
{\bf 1}_{\ell} & * & * & * & * \\
\hline
{\bf 0}_{m-\ell-1} & * & * & * & * \\
\hline
{\bf 0}_{\ell''} & * & * & B & * \\
\hline
{\bf 0}_{n-\ell''} & * & * & * & * \\
\end{array}\right], 
$$
for some adjacency matrix $B$ of size $\ell''$. 
Then by calculating $A=A(G)A(H)=A(H)A(G)$, we have 
\[
B{\bf 1}={\bf 1}, 
\]
which implies $B$ is the adjacency matrix of a perfect matching. 
Thus $\ell''$ must be even. 
However, $\ell''$ must be a divisor of $n$ by \cref{thm 4}, which contradicts the assumption that $n$ is odd.   
\item Next we assume $\ell=0$. 

Then we may assume that $A(G)$ has the following form:
$$
A(G)=\left[\begin{array}{c|c|c|c}
0 &  {\bf 0}_{m-1}^\top & {\bf 1}_{\ell'}^\top & {\bf 0}_{n-\ell'}^\top  \\
\hline
{\bf 0}_{m-1} & * & * & *  \\
\hline
{\bf 1}_{\ell'} & * & * & *  \\
\hline
{\bf 0}_{n-\ell'} & * & * & *  \\
\end{array}\right]. 
$$  
By $A=A(G)A(H)$, assume that $A(H)$ has the following form:  
$$
A(H)=\left[\begin{array}{c|c|c|c|c}
0 & {\bf 1}_{\ell''}^\top & {\bf 0}_{m-\ell''-1}^\top & {\bf 0}_{\ell'}^\top & {\bf 0}_{n-\ell'}^\top \\
\hline
{\bf 1}_{\ell''} & * & * & * & * \\
\hline
{\bf 0}_{m-\ell''-1} & * & * & * & * \\
\hline
{\bf 0}_{\ell'} & * & * & B & * \\
\hline
{\bf 0}_{n-\ell'} & * & * & * & * \\
\end{array}\right], 
$$
where $ \ell'' = |N_{H}(1)\cap V_1| $, for some adjacency matrix $B$ such that $B{\bf 1}={\bf 1}$.  

Then according to this block form, $A(G)$ can be partitioned into the following form: 
$$
A(G)=\left[\begin{array}{c|c|c|c|c}
0 & {\bf 0}_{\ell''}^\top & {\bf 0}_{m-\ell''-1}^\top & {\bf 1}_{\ell'}^\top & {\bf 0}_{n-\ell'}^\top \\
\hline
{\bf 0}_{\ell''} & * & * & * & * \\
\hline
{\bf 0}_{m-\ell''-1} & * & * & * & * \\
\hline
{\bf 1}_{\ell'} & * & * & * & * \\
\hline
{\bf 0}_{n-\ell'} & * & * & * & * \\
\end{array}\right],   
$$
and again by considering $A=A(H)A(G)$, we have 
$B{\bf 1}={\bf 1}$. 
Then $B$ is the adjacency matrix of a perfect matching, so $\ell'$ must be even. 
However $\ell' = |N_{H}(1)\cap V_2|$ is a divisor of $n$ by \cref{thm 4}. Since $n$ is odd, we have a contradiction. 
\qedhere
\end{enumerate}
\end{proof}

\begin{prop}\label{K_mn-even}
    The complete bipartite $ K_{2n,2m} $ is an MPF graph.
\end{prop}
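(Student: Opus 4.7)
The plan is to exhibit an explicit factorization in which one factor is the bipartite graph itself and the other is a disjoint union of two perfect matchings. Take $V_1=\{1,\ldots,2n\}$ and $V_2=\{2n+1,\ldots,2n+2m\}$, and use the identity labelings throughout. Let $G$ be the disjoint union of a perfect matching on $V_1$ and a perfect matching on $V_2$ (this is exactly where the evenness hypothesis enters: perfect matchings require both parts to have even size), and let $H$ be the complete bipartite graph $K_{2n,2m}$ with bipartition $(V_1,V_2)$.

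With respect to this bipartition, the adjacency matrices take the block form
\[
A(G)=\begin{bmatrix} P & 0 \\ 0 & Q \end{bmatrix},\qquad A(H)=\begin{bmatrix} 0 & J_{2n\times 2m} \\ J_{2m\times 2n} & 0 \end{bmatrix},
\]
where $P$ and $Q$ are the permutation matrices associated with the two perfect matchings. The main computation is then a one-line block product: since every row of $J$ is the all-ones row vector, left-multiplication of $J$ by any permutation matrix leaves it unchanged, so
\[
A(G)A(H)=\begin{bmatrix} 0 & PJ_{2n\times 2m} \\ QJ_{2m\times 2n} & 0 \end{bmatrix}=\begin{bmatrix} 0 & J_{2n\times 2m} \\ J_{2m\times 2n} & 0 \end{bmatrix}=A(K_{2n,2m}).
\]

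It remains to verify the graphical pair conditions. Since every edge of $G$ lies inside $V_1$ or inside $V_2$ while every edge of $H$ crosses the bipartition, one has $E(G)\cap E(H)=\emptyset$, so $G\oplus H$ is simple. The product matrix above is manifestly symmetric, $(0,1)$-valued, and zero on the diagonal, hence the pair $((G,g),(H,h))$ is graphical by \cref{lem 3}, and the graphical matrix product $G\,{}_{g}\ast{}_{h}\,H$ agrees on the nose with $K_{2n,2m}$ (no $2$-cycles need be collapsed). The only real difficulty is spotting the construction; once it is in hand, everything reduces to this block-matrix bookkeeping.
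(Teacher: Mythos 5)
Your construction is correct and is essentially the paper's own proof: the paper also takes one factor to be a perfect matching lying inside each part (the block-diagonal matrix of $A(K_2)$ blocks) and the other factor to be $K_{2n,2m}$ itself, with the key observation $DJ=J$ playing the role of your $PJ=J$, $QJ=J$. Your only deviations are cosmetic (allowing arbitrary perfect matchings on each side, and the slightly loose parenthetical about $2$-cycles, which by definition are collapsed but do not change the underlying graph), so nothing further is needed.
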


\begin{proof}
    Let $A$ be the adjacency matrix of the graph $K_2$.
    Then define $D\coloneqq \mathsf{diag}(A,\ldots,A)$, where $\mathsf{diag}$ is the diagonal matrix, whose entries of the block matrix $A$. 
    Now define the following matrix
$$
    \left[\begin{array}{c|c}
D_{n\times n} &  \bigzero_{n\times m} \\
\hline
\bigzero_{m\times n} & D_{m\times m} \\
\end{array}\right].
$$
This is the adjacency matrix corresponding to $ n+m $ matchings, say graph $ H $ with the labeling $h$. 
Now consider the following labeling $g$ for $ K_{2n,2m} $:
$$
    A(K_{2n,2m}) = \left[\begin{array}{c|c}
\bigzero_{n\times n} &  J_{n\times m} \\
\hline
J_{m\times n} & \bigzero_{m\times m} \\
\end{array}\right].
$$
    It is not hard to see that $D_{n\times n}J_{n\times m}=J_{n\times m}$ and so $H{}_h\ast{}_g K_{2n,2m}{}=K_{2n,2m}$.
\end{proof}

\completebipartitegraphs*

\begin{proof}
    It follows from \cref{K_mn-even} and \cref{K_mn-odd}.
\end{proof}

\begin{lem}
    Let $ K_{m_1,m_2,\ldots,m_r} $ be a complete multi partite graph with $ \ell $ number of partitions containing odd number of vertices. If $ \ell \equiv 2 \pmod{4} $ or $ \ell \equiv 3 \pmod{4} $, then $ K_{m_1,m_2,\ldots,m_r} $ is not an MPF graph.
\end{lem}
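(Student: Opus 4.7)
The plan is to apply \cref{odd-edges}, which states that any MPF graph has an even number of edges. Thus, it suffices to show that under the hypothesis $\ell\equiv 2$ or $3\pmod 4$, the graph $K_{m_1,\ldots,m_r}$ has an odd number of edges.

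First I would set $n=\sum_{i=1}^r m_i$ and use the standard identity
\[
|E(K_{m_1,\ldots,m_r})|=\sum_{i<j}m_im_j=\frac{n^2-\sum_{i=1}^r m_i^2}{2}.
\]
So parity of the edge count is controlled by $n^2-\sum m_i^2\pmod 4$. Next I would compute each piece mod $4$. For any integer $m$, $m^2\equiv 0\pmod 4$ if $m$ is even and $m^2\equiv 1\pmod 4$ if $m$ is odd; hence $\sum_{i=1}^r m_i^2\equiv \ell\pmod 4$, where $\ell$ is the number of odd $m_i$'s. Moreover $n\equiv \ell\pmod 2$, so $n^2\equiv 0\pmod 4$ when $\ell$ is even and $n^2\equiv 1\pmod 4$ when $\ell$ is odd.

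Finally I would run through the two cases in the statement. If $\ell\equiv 2\pmod 4$, then $n$ is even, so $n^2-\sum m_i^2\equiv 0-2\equiv 2\pmod 4$, forcing $|E(K_{m_1,\ldots,m_r})|$ to be odd. If $\ell\equiv 3\pmod 4$, then $n$ is odd and $n^2-\sum m_i^2\equiv 1-3\equiv 2\pmod 4$, again giving an odd number of edges. In either case \cref{odd-edges} rules out MPF.

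This argument is entirely parity-based and involves no structural obstruction; the only step requiring any thought is keeping careful track of the congruences mod $4$ (in particular noticing that the parities of $n$ and $\ell$ coincide). It is worth noting that the complementary residues $\ell\equiv 0,1\pmod 4$ produce an even edge count, which explains why the hypothesis of the lemma is precisely $\ell\equiv 2,3\pmod 4$ and not a weaker parity condition on $\ell$ alone.
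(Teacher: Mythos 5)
Your proof is correct and takes essentially the same route as the paper: both reduce the lemma to the edge-parity obstruction of \cref{odd-edges} and verify that $\ell\equiv 2,3\pmod 4$ forces $K_{m_1,\ldots,m_r}$ to have an odd number of edges. The only difference is bookkeeping --- the paper splits the edge set into edges meeting an even part (even count) and edges between odd parts, whose count has the parity of $\binom{\ell}{2}$, whereas you extract the same parity from the identity $2|E|=n^2-\sum_i m_i^2$ computed modulo $4$.
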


\begin{proof}
    Assume $ \ell \equiv 2 \pmod{ 4} $, and let $ m_{i_1}, m_{i_2},\ldots,m_{i_{\ell}} $ be the $ \ell  $ odd partitions, where $ i_{1},\ldots,i_{\ell} \in \{1,\ldots,r\} $. Also, let $ E = E_{e}+E_{o} $ be the number of edges of $ K_{m_1,\ldots,m_r} $, where $  E_{e} $ is the number of edges between even partitions with themselves and the number of even partitions with odd partitions, and $ E_o $ is the number of edges between $ \ell $ odd partitions. Now we count the number of edges of $ E_o $, so we have
    \begin{align*}
        E_o = m_{i_0}\sum_{k=2}^{\ell}m_{i_k}+m_{i_1}\sum_{k=3}^{\ell}m_{i_k}+\ldots+m_{i_{\ell-1}}m_{i_\ell}.
    \end{align*}
    It is not hard to see that the parity of $ E_o $ is odd. Hence, $ E $ is odd, and by \cref{odd-edges}, $ K_{m_1,\ldots,m_r} $ is not an MPF graph.

    \noindent With a similar argument, it can be shown that when $ \ell \equiv 3 \pmod{4} $, then the number of edges of $ K_{m_1,\ldots,m_r} $ is odd, and \cref{odd-edges} applies.
\end{proof}

\begin{thm}
Complete $m$-partite graph $ K_{2n,2n,\ldots,2n} $ is an MPF graph for $ m,n\geq 2 $.
\end{thm}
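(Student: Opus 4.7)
The plan is to construct a direct graphical factorization using a block-Kronecker structure, generalizing the proof of \Cref{K_mn-even}. Label the vertices of $X \coloneqq K_{2n,2n,\ldots,2n}$ so that consecutive blocks of $2n$ labels correspond to the $m$ parts; then the adjacency matrix takes the Kronecker form $A(X) = (J_m - I_m) \otimes J_{2n}$. I would then take $G \coloneqq X$ with this same labeling, and let $H$ be the disjoint union of $mn$ copies of $K_2$, labeled so that each copy of $K_2$ sits inside a single part. If $D$ denotes the $2n \times 2n$ adjacency matrix of a fixed perfect matching on $2n$ vertices, this choice gives $A(H) = I_m \otimes D$.

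The essential computation is the Kronecker-product identity
\[
A(G)\,A(H) \;=\; [(J_m - I_m) \otimes J_{2n}]\,[I_m \otimes D] \;=\; (J_m - I_m) \otimes (J_{2n} D) \;=\; (J_m - I_m) \otimes J_{2n} \;=\; A(X),
\]
where $J_{2n} D = J_{2n}$ holds because $D$ is $1$-regular, so each column of $D$ sums to $1$. Thus the matrix product is already equal to the symmetric $(0,1)$-matrix $A(X)$ with zero diagonal, and so the graphical matrix product of $G$ and $H$ (with respect to these labelings) coincides with $X$.

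It remains to verify that $((G,g),(H,h))$ is a genuine graphical pair. The edges of $G$ go strictly between distinct parts while the edges of $H$ go strictly within parts, so $E(G) \cap E(H) = \emptyset$ and $G \oplus H$ is simple; combined with the fact that $A(G)A(H)$ is symmetric, $(0,1)$, and has zero diagonal, \Cref{lem 3} then supplies the required graphical conditions and the absence of loops in the product. No serious obstacle is anticipated here: the argument reduces to a short Kronecker-product calculation, and the only delicate point is bookkeeping the labelings so that the block structure of $I_m \otimes D$ aligns with the part structure encoded by $(J_m - I_m) \otimes J_{2n}$.
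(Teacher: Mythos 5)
Your construction is correct, but it takes a different route from the paper's. You factor $X=K_{2n,\ldots,2n}$ as $X$ itself times a perfect matching lying inside the parts: with the part-blocked labeling, $A(G)=(J_m-I_m)\otimes J_{2n}$ and $A(H)=I_m\otimes D$, and the identity $J_{2n}D=J_{2n}$ (column sums of $D$ equal $1$) gives $A(G)A(H)=(J_m-I_m)\otimes J_{2n}=A(X)$, which is symmetric, $(0,1)$, with zero diagonal; since $E(G)$ lies between parts and $E(H)$ within parts, $G\oplus H$ is simple, so \Cref{lem 3} makes $((G,g),(H,h))$ a graphical pair with loopless product, and nothing in \Cref{MPF} forbids one factor being $X$ itself. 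This is precisely the multipartite generalization of the paper's own proof of \Cref{K_mn-even}. The paper's proof of the multipartite theorem instead exhibits two factors both different from $X$: a $2(m-1)$-regular graph $G$ whose blocks between distinct parts are $J_2\otimes I_n$, and $H$ the disjoint union of $m$ copies of $K_{n,n}$, so that $A(G)A(H)=(J_m-I_m)\otimes\bigl[(J_2\otimes I_n)\,((J_2-I_2)\otimes J_n)\bigr]=(J_m-I_m)\otimes J_{2n}$ by the same kind of Kronecker computation. Your version is shorter and needs only the trivial factor $H$ of valency $1$; the paper's version buys a decomposition into two commuting factors of nontrivial valencies $2(m-1)$ and $n$, which is closer in spirit to the commuting-decomposition motivation (and, incidentally, to the conjecture stated for $K_{4n+1}$). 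Both arguments establish the statement.
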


\begin{proof}
    Let $ G $ be the graph corresponding to the following matrix:
    $$
    \left[\begin{array}{c|c|c|c}
\bigzero_n~~\bigzero_n &  I_n~~I_n & \cdots & I_n~~I_n \\
\bigzero_n~~\bigzero_n & I_n~~I_n & \cdots &I_n~~I_n \\
\hline
I_n~~I_n &\bigzero_n~~\bigzero_n &\cdots &I_n~~I_n \\
I_n~~I_n & \bigzero_n~~\bigzero_n & \cdots & I_n~~I_n \\
\hline
\vdots & \vdots & \ddots & \vdots \\
\hline
I_n~~I_n & I_n~~I_n & \cdots & \bigzero_n~~\bigzero_n \\
I_n~~I_n & I_n~~I_n &\cdots & \bigzero_n~~\bigzero_n \\
\end{array}\right].
$$
Since it is a (0,1)-matrix that is also symmetric, with 0 in its diagonal entries, then $ G $ is simple. Also, it is clear that $ G $ is $ 2(m-1) $-regular. So, this matrix is $ A(G) $.

\noindent Also, let $ H $ be $ n $-regular disconnected graph of $ m $ disjoint union of $ K_{n,n} $. We let $ H = \sqcup_{i=0}^{m-1}K_{n,n}^i $ and $ h\colon V(H) \to \{0,1,\ldots,2mn-1\}$ such that the first part of $ K_{n,n}^i $ in $ H $ contains the set of vertices $ \{in+1,in+2,\ldots in+n\} $ and the second part of $ K_{n,n}^i $ contains the vertices $ \{(i+1)n+1,(i+1)n+2,\ldots,(i+1)n+n\} $ for $ i \in \{0,2,4,\ldots,2m-2\} $. Hence, $A(H)_{2mn\times 2mn} =$
    $$
     \left[\begin{array}{c|c|c|c}
\bigzero_n~~J_n &  \bigzero_n~~\bigzero_n & \cdots & \bigzero_n~~\bigzero_n \\
J_n~~\bigzero_n & \bigzero_n~~\bigzero_n & \cdots &\bigzero_n~~\bigzero_n \\
\hline
\bigzero_n~~\bigzero_n &\bigzero_n~~J_n &\cdots &\bigzero_n~~\bigzero_n \\
\bigzero_n~~\bigzero_n & J_n~~\bigzero_n & \cdots & \bigzero_n~~\bigzero_n \\
\hline
\vdots & \vdots & \ddots & \vdots \\
\hline
\bigzero_n~~\bigzero_n & \bigzero_n~~\bigzero_n & \cdots & \bigzero_n~~J_n \\
\bigzero_n~~\bigzero_n & \bigzero_n~~\bigzero_n &\cdots & J_n~~\bigzero_n \\
\end{array}\right],
$$
    Therefore, $A(G)A(H)=A(H)A(G) =$
$$
     \left[\begin{array}{c|c|c|c}
\bigzero_n~~\bigzero_n &  J_n~~J_n & \cdots & J_n~~J_n \\
\bigzero_n~~\bigzero_n & J_n~~J_n & \cdots &J_n~~J_n \\
\hline
J_n~~J_n &\bigzero_n~~\bigzero_n &\cdots &J_n~~J_n \\
J_n~~J_n & \bigzero_n~~\bigzero_n & \cdots & J_n~~J_n \\
\hline
\vdots & \vdots & \ddots & \vdots \\
\hline
J_n~~J_n & J_n~~J_n & \cdots & \bigzero_n~~\bigzero_n \\
J_n~~J_n & J_n~~J_n &\cdots & \bigzero_n~~\bigzero_n \\
\end{array}\right],
$$
which is clearly the adjacency matrix of $ K_{2n,2n,\ldots,2n} $. This implies that $ K_{2n,2n,\ldots,2n} $ is an MPF graph.    
\end{proof}

\subsection{Other Graph Classes}

\begin{lem}\label{lem:ei}
Assume that $A=A_1A_2$ holds for symmetric $(0,1)$-matrices $A,A_1$, and $A_2$ of size $ n\times n $. 
If the $j$-th row vector of $A_1$ is $e_i^t$, then  the $i$-th row vector of $A_2$ coincides with the $j$-th row vector of $A$. 
\end{lem}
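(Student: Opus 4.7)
The plan is to prove this by a one-line row-extraction argument. Recall that for any product $A = A_1A_2$, the $j$-th row of $A$ equals the $j$-th row of $A_1$ multiplied on the right by $A_2$. Hence, if the $j$-th row of $A_1$ is $e_i^{t}$, then the $j$-th row of $A$ is $e_i^{t}A_2$, which is exactly the $i$-th row of $A_2$.

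First I would set up the notation, letting $(A_1)_{j,\cdot}$ denote the $j$-th row of $A_1$, and write out the identity $(A_1A_2)_{j,\cdot}=(A_1)_{j,\cdot}A_2$. Then I would substitute the hypothesis $(A_1)_{j,\cdot}=e_i^{t}$ and conclude $(A)_{j,\cdot}=e_i^{t}A_2=(A_2)_{i,\cdot}$, which is the desired equality of row vectors.

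Interestingly, neither the symmetry of $A_1$, $A_2$, $A$ nor the $(0,1)$-hypothesis is used in the proof; these assumptions come from the context in which the lemma will be applied (namely, to adjacency matrices of graphs forming a matrix product factorization). Accordingly, there is no genuine obstacle: the content of the lemma is essentially the definition of matrix multiplication, and the proof will occupy only a couple of lines.
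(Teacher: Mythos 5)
Your proof is correct and follows essentially the same route as the paper's: both extract the $j$-th row of $A=A_1A_2$ as $e_i^{t}A_2$, which picks out the $i$-th row of $A_2$. Your added remark that symmetry and the $(0,1)$-hypothesis are not actually used is accurate.
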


\begin{proof}
    Let $ \textnormal{Col}_k(A_2) $ be the $ 
k $-th column of $ A_2 $, and $ \textnormal{Row}_j(A_2) $ be the $ j $-th row of $ A_2 $. Since $ A_1A_2 = A $, and $ j $-th row of $ A_1 $ is $ e_i^t $, then the $ j $-th row of $ A $ is
    \begin{align*}
        A_{(j,k)} = \sum_{k=1}^n e_i^t\cdot \textnormal{Col}_k(A_2) = \textnormal{Row}_j(A_2),
    \end{align*}
    as desired.
\end{proof}

\begin{prop}\label{frined}
    The friendship graph $F_n$ is not MPF.
\end{prop}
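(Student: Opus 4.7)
The plan is to first dispatch the easy values of $n$ and then handle $n = 2^{k}$ with $k \geq 1$ by a structural argument that is not available from the lemmas above. For $n=1$, $F_1 = K_3$ has three edges, so \cref{odd-edges} applies. For $n$ with an odd prime factor $p$, the degree sequence $(2n, 2, \ldots, 2)$ of $F_n$ satisfies the hypothesis of \cref{lem 19} with $p_i = p$, so \cref{lem 14} rules it out. The novel content is the case $n = 2^{k}$, $k \geq 1$, to which I devote the rest of the plan.

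For this case I would assume $F_n = G {}_{g}\ast{}_{h} H$ and derive a contradiction. Denote by $c$ the central vertex and by $\tau(v)$ the triangle-partner of an outer vertex $v$, so $N_{F_n}(v) = \{c, \tau(v)\}$. By \cref{thm 4}, $\deg_G(v)\deg_H(v) = 2$ for every outer $v$; partition the outer vertices into $V_1 = \{v : \deg_G(v) = 1\}$ and $V_2 = \{v : \deg_G(v) = 2\}$ with $a = |V_1|$ and $b = |V_2|$, so $a + b = 2n$.

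Next, for each $v \in V_1$ with unique $G$-neighbor $\phi(v)$, I apply \cref{lem:ei} to the row of $A(G)$ at $v$, which equals $e_{\phi(v)}^{\top}$, to obtain $N_H(\phi(v)) = N_{F_n}(v) = \{c, \tau(v)\}$. The possibility $\phi(v) = c$ would put $c$ into $N_H(c)$ (a loop), so it is excluded; hence $\phi(v) \in V_1$, and by symmetry of $A(G)$ the map $\phi$ is a fixed-point-free involution on $V_1$ (making $a$ even). The analogous $\psi$ on $V_2$ makes $b$ even. The decisive further observation is that $c \in N_H(\phi(v))$ implies $\phi(v) \in N_H(c)$ for every $v \in V_1$, so $V_1 = \phi(V_1) \subseteq N_H(c)$ and $\deg_H(c) \geq a$; symmetrically $\deg_G(c) \geq b$. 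Combined with $\deg_G(c)\deg_H(c) = 2n = a + b$, this yields $ab \leq a + b$, i.e., $(a-1)(b-1) \leq 1$.

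Finally, among even nonnegative $(a,b)$ with $a + b = 2n$, the only solutions are $(0, 2n)$, $(2n, 0)$, and $(2, 2)$, the last forcing $n = 2$. In the first two, every vertex has $H$- (resp. $G$-) degree $1$, so $H$ (resp. $G$) would be a $1$-regular graph on the $2n + 1$ (odd) vertices, impossible. For $n = 2$ and $(a,b) = (2,2)$, the $\phi$-pair in $V_1$ must lie in different triangles (otherwise the loop obstruction recurs), and the same for the $\psi$-pair in $V_2$; labelling the triangles $cv_1v_2$ and $cv_3v_4$ with $V_1 = \{v_1, v_3\}$ and $V_2 = \{v_2, v_4\}$, \cref{lem:ei} gives $N_H(v_1) = \{c, v_4\}$, so $v_1 \in N_H(v_4)$, but the $\psi$-pair independently forces $N_H(v_4) = \{v_2\}$, a contradiction. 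I expect the main obstacle to be exactly this last subcase: arithmetic alone leaves $(2,2)$ open, and the collision only emerges after superimposing the $H$-neighborhood data dictated by $V_1$ onto the $\psi$-matching dictated by $V_2$.
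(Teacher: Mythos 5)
Your proof is correct, but it takes a genuinely different route from the paper's. The paper's argument is a short, uniform entry computation valid for every $n$ at once: fix the standard labeling of $F_n$, take a single outer vertex, note via \cref{thm 4} that (after possibly swapping the two factors) its row in $A_1$ is some $e_i^\top$, apply \cref{lem:ei} to identify row $i$ of $A_2$ with that vertex's row of $A(F_n)$, and then evaluate just two entries of $A_2A_1=A$ to force $i$ to be the chosen vertex itself, i.e.\ a diagonal $1$ in $A_1$, contradicting loop-freeness. You instead first reduce to $n=2^k$ via \cref{odd-edges} and \cref{lem 14} and then run a global structural argument: splitting the outer vertices by their $G$-degree into $V_1,V_2$, applying \cref{lem:ei} in both directions (legitimate, since the factors of a graphical pair commute) to obtain the fixed-point-free involutions $\phi,\psi$ and the containments $V_1\subseteq N_H(c)$, $V_2\subseteq N_G(c)$, whence $ab\le a+b$, i.e.\ $(a-1)(b-1)\le 1$; parity then leaves only $(0,2n)$, $(2n,0)$ and, for $n=2$, $(2,2)$, and you eliminate each correctly (the first two by the handshake obstruction to a $1$-regular graph on $2n+1$ vertices, the last by the clash between $N_H(v_1)=\{c,v_4\}$ and $N_H(v_4)=\{v_2\}$). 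The paper's approach buys brevity and uniformity in $n$; yours buys structural information about any putative factorization (the two matchings $\phi,\psi$ and the bounds $\deg_H(c)\ge|V_1|$, $\deg_G(c)\ge|V_2|$) at the cost of length and a case analysis. One further remark: your structural argument never actually uses the hypothesis $n=2^k$ -- the inequality, the parity of $a$ and $b$, and the three-case elimination go through verbatim for every $n\ge 1$ -- so the preliminary reductions via \cref{odd-edges} and \cref{lem 14} could simply be deleted, leaving a self-contained second proof of the proposition.
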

\begin{proof}
Let $A=\begin{bmatrix}
0 & {\bm 1}^\top\\
{\bm 1} & I_{n}\otimes (J_2-I_2)  
\end{bmatrix}$.  
Assume that there exist adjacency matrices $A_1, A_2$ such that $A=A_1A_2$. 
Since the valency of the vertex $2$ in $F_n$ is two, the degree of the vertex $2$ in $A_1, A_2$ are one or two by Lemma~3.5. 
By changing the role of $A_1$ and $A_2$ if necessary, We may assume that the second row of $A_1$ is $e_i$ for some $i\in\{1,3,4,\ldots,n\}$. 
By Lemma~\ref{lem:ei}, the $i$-th row of $A_2$ is $(1010\cdots0)$. 
Then by calculating the $(i,1)$-entry of $A_2A_1=A$, we have $(A_1)_{3,1}=1$ and thus $A_{1,3}=1$. 
Thus $A_{i,3}=(A_2A_1)_{i,3}=1$, which shows that $i=2$. Therefore, it turns out that the $(2,2)$-entry of $A_1$ equals $1$, a contradiction.
\end{proof}
\begin{defn}
Let $d$ be a positive integer greater than $1$. 
The binary Hamming association scheme is  $H(d,2)=(X,\{R_i\}_{i=0}^d)$ with $X=\{0,1\}^d$ and 
\begin{align*}
R_i=\{(x,y)\in X\times X \mid d(x,y)=i\}, 
\end{align*}
where $d(x,y)$ is the Hamming distance between $x$ and $y$.
\end{defn}

\noindent The graph $(X,\tilde{R}_1)$, where $\tilde{R}_\ell=\{\{x,y\} \mid (x,y\in R_\ell)\}$, is known as the hypercube $Q_d$. For any $\ell\in\{1,\ldots,d\}$, let $A_\ell$ be the adjacency matrix of the graph $(X,\tilde{R}_\ell)$, and set $A_0=I$. 

\begin{thm}\label{thm:cube}
    The hypercube $Q_d$ is an MPF graph. 
\end{thm}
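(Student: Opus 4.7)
The plan is to exhibit an explicit factorization inside the Bose--Mesner algebra of the binary Hamming scheme $H(d,2)$, using two of the distance-$\ell$ graphs already introduced just before the theorem. Concretely, I would take $G=(X,\tilde R_d)$, the antipodal perfect matching pairing each $x\in\{0,1\}^d$ with its complement $\bar x$, and $H=(X,\tilde R_{d-1})$, the distance-$(d-1)$ graph of the hypercube, with $g=h$ both equal to the same fixed bijection $X\to\{1,\ldots,2^d\}$.

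The central step is the identity $A_d A_{d-1}=A_1$. For any $x,y\in X$, the entry $(A_dA_{d-1})_{xy}$ counts the vertices $z$ with $d(x,z)=d$ and $d(z,y)=d-1$. The first condition pins down a unique $z$, namely $z=\bar x$, and then $d(\bar x,y)=d-d(x,y)$, so $d(z,y)=d-1$ holds if and only if $d(x,y)=1$. Hence $(A_dA_{d-1})_{xy}=1$ exactly when $\{x,y\}$ is an edge of $Q_d$ and $0$ otherwise, giving $A_dA_{d-1}=A_1$.

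With this in hand the remaining verifications are immediate. The matrix $A_dA_{d-1}=A_1$ is symmetric with entries in $\{0,1\}$, so $((G,g),(H,h))$ is a graphical pair. A loop in the product would require $d=d-1$, so there are none, and since the product is already a symmetric $(0,1)$-matrix there are no proper $2$-cycles in the induced digraph that need to be collapsed; consequently $G{}_{g}\ast_h H$ is the simple graph whose adjacency matrix is $A_1=A(Q_d)$, certifying that $Q_d$ is MPF.

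There is essentially no obstacle in this argument; the only point requiring a word of comment is the tacit assumption $d\geq 2$, which matches the definition of $H(d,2)$ given just above the theorem. The boundary case $d=1$ gives $Q_1=K_2$, which has a single edge and is ruled out by \cref{odd-edges}, so the restriction is both necessary and consistent with the setup.
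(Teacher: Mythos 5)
Your proposal is correct and uses exactly the same factorization as the paper: $Q_d$ as the product of the distance-$(d-1)$ graph and the antipodal matching $(X,\tilde R_d)$, i.e.\ $A_{d-1}A_d=A_1$. The only difference is cosmetic — you verify this identity by a direct count using $z=\bar x$ and $d(\bar x,y)=d-d(x,y)$, while the paper quotes the intersection numbers $p_{d-1,d}^{k}$ of $H(d,2)$ — and your remark on the case $d=1$ is a harmless clarification.
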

\begin{proof}
Since $p_{d-1,d}^1=1$ and $p_{d-1,d}^k=0$ for $k\in\{0,2,\ldots,d\}$, 
$
   A_{d-1}A_d=\sum_{k=0}^d p_{d-1,d}^k A_k=A_1
$. 
\end{proof}

\noindent Up to this point, we have noted that many graph classes do not possess matrix product factorization. This naturally leads to the question: Is there a specific operation that can be applied to a given graph $G$ to produce a new graph $\widetilde{G}$ for which $\widetilde{G}$ possesses a matrix product factorization? In the following definition, we introduce an operation for this purpose.

\begin{defn}\label{Gtilde}
    Let $G$ be a simple graph on $n$ vertices. 
    Then $\widetilde{G} $ is a graph with the vertices $\{(i,v_j)\mid  v_j\in V(G),i=1,2\}$ and the edges:
    $$(1,v_i)\sim (2,v_j) \Longleftrightarrow v_i\sim v_j$$
\end{defn}

\begin{lem}\label{Xtilde}
    Let  $ X $ be a graph.
    Then $ \widetilde{X} $ is an MPF graph.
\end{lem}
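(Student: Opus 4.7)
The plan is to exhibit an explicit graphical pair whose graphical matrix product is $\widetilde{X}$. The key observation is structural: if we label $(i,v_j)\in V(\widetilde X)$ by $(i-1)n+j$, where $n=|V(X)|$, then, because every edge of $\widetilde X$ joins the ``first layer'' $\{(1,v_j)\}$ to the ``second layer'' $\{(2,v_j)\}$ and does so exactly when $v_i\sim v_j$ in $X$, the adjacency matrix of $\widetilde{X}$ takes the anti-block-diagonal form
\[
A(\widetilde{X})=\begin{bmatrix} \bigzero_n & A(X) \\ A(X) & \bigzero_n \end{bmatrix}.
\]
This shape immediately suggests a factorization: one factor should swap the two layers, and the other should act as a copy of $X$ inside each layer.

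Concretely, I would take $G$ to be the perfect matching on $2n$ vertices that pairs $j$ with $n+j$ for $j=1,\ldots,n$, and $H$ to be the disjoint union of two copies of $X$ placed on $\{1,\ldots,n\}$ and $\{n+1,\ldots,2n\}$ respectively. Under the labelings $g,h$ induced by these block descriptions,
\[
A(G)=\begin{bmatrix} \bigzero_n & I_n \\ I_n & \bigzero_n \end{bmatrix},\qquad A(H)=\begin{bmatrix} A(X) & \bigzero_n \\ \bigzero_n & A(X) \end{bmatrix},
\]
and both $G$ and $H$ are simple loopless graphs by construction.

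The remaining work is a one-line block multiplication showing that $A(G)A(H)$ equals the anti-block-diagonal form above, hence $A(\widetilde X)$. Since $A(X)$ is a symmetric $(0,1)$-matrix with zero diagonal, the same holds for the product, so $((G,g),(H,h))$ is a graphical pair by \cref{lem 3} and no $2$-cycles appear in the intermediate weighted digraph $G\,{}_{g}\cdot{}_{h}H$. Consequently $G\,{}_{g}\ast_{h}H=\widetilde X$, as required.

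I do not expect a real obstacle here: the bipartite-double-cover shape of $\widetilde X$ essentially forces the decomposition. The only microscopic check is that the product matrix has entries in $\{0,1\}$ rather than merely in $\mathbb{Z}_{\geq 0}$, and this is immediate because in each block of $A(G)A(H)$ exactly one of the two candidate summands is the zero matrix, so no entry can exceed $1$.
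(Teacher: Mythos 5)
Your factorization is exactly the paper's, with the roles of the two factors swapped: the paper takes $A(G)=I_2\otimes A(X)$ (two disjoint copies of $X$) and $A(H)=A(K_2)\otimes I_n$ (a perfect matching), so that $A(G)A(H)=A(K_2)\otimes A(X)=A(\widetilde{X})$, which is precisely your anti-block-diagonal computation. The argument is correct; the only cosmetic slip is your closing remark that ``no $2$-cycles appear'' in $G\,{}_{g}\cdot{}_{h}H$ --- since the product is symmetric, every edge of the weighted digraph does arise as a $2$-cycle, and the definition of the graphical matrix product simply collapses each such $2$-cycle to a single edge, which is what yields $G\,{}_{g}\ast_{h}H=\widetilde{X}$.
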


\begin{proof}
Let  $ f\colon V(X)\to \{1,2,\ldots,n\} $ be a labeling of $X$. 
Then consider a labeling of  $\widetilde{X}$ such that $ A(\widetilde{X}) = A(K_2)\otimes A(X) $. Let $ G $ be a disjoint union of $2$ copies of $X$ and then set the labeling $g$ of $G$ such that $ A(G) = I_2\otimes A(X) $. Now suppose $ 
H $ is a disjoint union of $n$ copies of $K_2$.
We set the labeling $h$ of $H$ such that $ A(H) = A(K_2)\otimes I_n $. Therefore, it is not hard to see that $ A(G)A(H) = A(\widetilde{X}) $, which implies that $ G_g\ast_h H = \widetilde{X} $.
\end{proof}

\begin{cor}\label{C-even}
    Every cycle $ C_{2n} $ is an MPF graph.
\end{cor}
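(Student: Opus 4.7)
The plan is to realize $C_{2n}$ as the bipartite double cover $\widetilde X$ of a suitable graph and then apply \cref{Xtilde}. The natural candidate is $X=C_n$. Inspecting the edges of $\widetilde{C_n}$ from \cref{Gtilde} by tracing the walk $(1,v_1),(2,v_2),(1,v_3),\ldots$ that alternates between the two layers along the cyclic adjacency of $C_n$, one sees that the walk closes into a single cycle of length $2n$ exactly when $n$ is odd, and splits into two disjoint $n$-cycles when $n$ is even. Thus for odd $n$ we have $\widetilde{C_n}\cong C_{2n}$, and \cref{Xtilde} immediately gives that $C_{2n}$ is MPF. This handles the cycles $C_{4k+2}$ for all $k\ge 1$.

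For the remaining cycles $C_{4m}$ the bipartite double cover trick fails, so I would supply a direct Cayley-graph factorization. Take $G=\cay(\mathbb{Z}_{4m};\{2m\})$, a perfect matching, and $H=\cay(\mathbb{Z}_{4m};\{\pm(2m-1)\})$, which is a Hamilton cycle because $\gcd(2m-1,4m)=1$. Both are loopless and simple, and a short computation modulo $4m$ yields
\[
A(G)A(H)\,e_i \;=\; A(G)\bigl(e_{i+(2m-1)}+e_{i-(2m-1)}\bigr) \;=\; e_{i+1}+e_{i-1},
\]
so $A(G)A(H)=A(C_{4m})$. Since this product is a symmetric $(0,1)$-matrix with zero diagonal, the pair $((G,g),(H,h))$ is automatically graphical, giving the desired MPF factorization.

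The main subtlety is precisely this dichotomy in $n$: \cref{Xtilde} by itself does not produce $C_{4m}$, because the bipartite double cover of a bipartite graph is always disconnected, and $C_n$ is bipartite whenever $n$ is even. The Cayley construction above is the cleanest substitute I see for that case, and together the two arguments cover all even cycles $C_{2n}$.
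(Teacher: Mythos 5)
Your proof is correct, and it is in fact more careful than the paper's own argument. The paper disposes of the corollary in one line by asserting $\widetilde{C_n}=C_{2n}$ and invoking \cref{Xtilde}; as you observe, that identity only holds for odd $n$, since for even $n$ the graph $\widetilde{C_n}$ (the bipartite double cover, per \cref{Gtilde}) is two disjoint copies of $C_n$. So your split into the cases $C_{4k+2}$ (where you follow exactly the paper's route through \cref{Xtilde}) and $C_{4m}$ (where you give a direct factorization $A(C_{4m})=A(G)A(H)$ with $G=\cay(\mathbb{Z}_{4m};\{2m\})$ an antipodal perfect matching and $H=\cay(\mathbb{Z}_{4m};\{\pm(2m-1)\})$ a Hamilton cycle) genuinely patches a gap in the published proof rather than merely reproving it; your column computation $A(G)A(H)e_i=e_{i+1}+e_{i-1}$ is right, and since the product is visibly a symmetric $(0,1)$-matrix with zero diagonal, the pair is graphical. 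One small remark: the dichotomy is not actually forced on you, because the same Cayley construction works for every even cycle --- take $G=\cay(\mathbb{Z}_{2n};\{n\})$ and $H=\cay(\mathbb{Z}_{2n};\{\pm(n-1)\})$; when $n$ is odd $H$ is merely a disjoint union of two cycles rather than a Hamilton cycle, but it is still a simple $2$-regular graph and the identity $A(G)A(H)e_i=e_{i+1}+e_{i-1}$ is unchanged, so you could treat all $C_{2n}$ uniformly and bypass \cref{Xtilde} altogether.
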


\begin{proof}
    It is not hard to see that $ \widetilde{C_n} = C_{2n} $. Then the argument of \cref{Xtilde} works.
\end{proof}

\begin{thm}
\label{cycle} 
    The cycle graph $ C_n $ is an MPF graph if and only if $ n $ is even.
\end{thm}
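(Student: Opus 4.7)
The plan is to note that both directions of this biconditional have already been established in earlier results of the paper, so the proof reduces to assembling them.

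For the \emph{only if} direction, I would invoke \cref{C_odd}, where it was shown that odd cycles $C_{2k+1}$ are not MPF. That was a consequence of \cref{odd-edges} (an MPF graph must have an even number of edges), together with the observation that $C_{2k+1}$ has exactly $2k+1$ edges. So if $n$ is odd, $C_n$ cannot be MPF.

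For the \emph{if} direction, I would invoke \cref{C-even}, which established that $C_{2n}$ is MPF. The argument there ran through \cref{Xtilde}: one checks that $\widetilde{C_n} = C_{2n}$ (the doubling construction of \cref{Gtilde} applied to a cycle produces a cycle of twice the length), and then \cref{Xtilde} provides an explicit factorization $A(K_2)\otimes A(C_n) = (I_2\otimes A(C_n))(A(K_2)\otimes I_n)$.

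Since both directions are already in hand, the proof is a one-line citation. No step here is a genuine obstacle, as the substantive work was done in proving \cref{odd-edges}, \cref{Xtilde}, and the identity $\widetilde{C_n}=C_{2n}$ used in \cref{C-even}. I would simply write: \emph{The result follows by combining \cref{C_odd} and \cref{C-even}.}
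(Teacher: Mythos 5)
Your proposal matches the paper's proof exactly: the paper also derives the theorem as an immediate consequence of \cref{C_odd} (odd cycles fail the even-edge-count criterion of \cref{odd-edges}) and \cref{C-even} (even cycles are MPF via the doubling construction $\widetilde{C_n}=C_{2n}$ of \cref{Xtilde}). Your elaboration of the underlying ingredients is accurate, so nothing further is needed.
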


\begin{proof}
    It follows from \cref{C_odd} and \cref{C-even}.
\end{proof}

\begin{lem}\label{selfMPF}
Let $G$ be a simple graph on $2n$ vertices and let $X$ be the graph defined with $ J_2\otimes A(G) $ as its adjacency matrix. Then $ X $ is an MPF graph.
\end{lem}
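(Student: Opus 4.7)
The plan is to mimic the strategy used for $K_{2n,2m}$ in \Cref{K_mn-even}: exhibit $X$ itself as one of the two factors, with the other factor being a perfect matching that acts as a ``pseudo-identity'' on $A(X)$. This trick is enabled by the tensor structure of $A(X)=J_2\otimes A(G)$.

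Concretely, I would let $H$ be the graph on $4n$ vertices whose adjacency matrix is $A(K_2)\otimes I_{2n}$; combinatorially this is the perfect matching that pairs the two copies of each vertex of $G$ inside the natural bipartition of $V(X)=\{(i,v):i\in\{1,2\},\,v\in V(G)\}$. The key identity is the simple matrix fact $A(K_2)\cdot J_2=J_2$, from which the mixed-product rule for Kronecker products yields
\[
A(H)\cdot A(X)=(A(K_2)\otimes I_{2n})(J_2\otimes A(G))=(A(K_2)J_2)\otimes(I_{2n}A(G))=J_2\otimes A(G)=A(X).
\]

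What remains is to verify the formal requirements in \Cref{MPF}. First, $X$ is a bona fide simple graph: $J_2\otimes A(G)$ is symmetric and $(0,1)$, and its diagonal blocks equal $A(G)$, which has zero diagonal. Second, $H$ is a perfect matching, hence simple and loopless. Third, the product $A(H)A(X)$ equals $A(X)$, which is symmetric and $(0,1)$, so the pair $((H,h),(X,g))$ is graphical with no loops in $H\cdot X$. Finally, $H\oplus X$ is simple: an edge of $H$ has the form $\{(1,v),(2,v)\}$, while an edge of $X$ between the two copies has the form $\{(1,u),(2,v)\}$ with $u\sim_G v$, and such an edge could coincide with a matching edge only if $v\sim_G v$, which is impossible.

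The only conceptual step is recognizing that $X$ itself may be used as a factor, exactly as the paper already does in the $K_{2n,2m}$ argument; once that is noted, the verification is a short tensor identity together with bookkeeping, so I do not anticipate any genuine obstacle.
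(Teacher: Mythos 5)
Your proposal is correct and follows essentially the same route as the paper: the paper's own (rather terse) proof likewise multiplies $A(X)=J_2\otimes A(G)$ by the perfect matching with adjacency matrix $A(K_2)\otimes I$ and uses the identity $A(K_2)J_2=J_2$ via the mixed-product rule, exactly the self-factorization trick already used for $K_{2n,2m}$ in \cref{K_mn-even}. Your write-up is in fact more careful than the paper's, which states the construction with $I_n$ and ``$n$ copies of $K_2$'' where the sizes should be $2n$, and your explicit check that $H\oplus X$ is simple and that the product is a symmetric $(0,1)$-matrix fills in the ``not hard to see'' step.
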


\begin{proof}
    Let $g\colon V(G)\to\{1,\ldots,n\}$ be the labeling defined by $A(G)$.
    Then we define the graph $H$ as the disjoint union of $n$ copies of $K_2$.
    Next consider the labeling $h\colon V(H)\to \{1,\ldots,n\}$ such that $ 
A(H) = A(K_2)\otimes I_n $.
Now it is not hard to see that $G_g\ast _h H= X$, as desired.
\end{proof}

\noindent We already show that $ C_{2n} $ is an MPF graph in \cref{C-even}. In the following corollary, we show that $ C_{4n} $ is an MPF graph into $C_{4n}$ and a matching.

\begin{cor}
    Every cycle $C_{4n}$ of length $4n$ is an MPF graph into $C_{4n}$ and a matching.
\end{cor}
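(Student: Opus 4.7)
The plan is to produce an explicit Cayley-graph factorization. Set $G = \cay(\mathbb{Z}_{4n}; \{\pm 1\})$ with the natural labeling $g(i) = i$, so that $G \cong C_{4n}$, and let $H = \cay(\mathbb{Z}_{4n}; \{2n\})$ with labeling $h(i) = i$. Since $2n$ has order $2$ in $\mathbb{Z}_{4n}$, the graph $H$ is the disjoint union of the $2n$ edges $\{i, i + 2n\}$ for $i \in \{0, 1, \ldots, 2n - 1\}$, and is therefore a perfect matching.

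First I would check that $G \oplus H$ is simple. The connection sets $\{\pm 1\}$ and $\{2n\}$ are disjoint for every $n \geq 1$, so $E(G) \cap E(H) = \emptyset$, and each of $G$ and $H$ is individually simple; hence $G \oplus H$ has no multiple edges, and by \cref{lem 3}(1) the resulting product has no loops.

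Next I would compute $A(G) A(H)$ directly from the Cayley structure. For each pair $(i,j)$, the $(i,j)$-entry counts the $k \in \mathbb{Z}_{4n}$ with $k - i \in \{\pm 1\}$ and $j - k = 2n$, so $j - i \in \{2n - 1,\, 2n + 1\}$. Because $2 \not\equiv 0 \pmod{4n}$, these two residues are distinct, so each entry of $A(G) A(H)$ lies in $\{0, 1\}$. Moreover $2n + 1 \equiv -(2n - 1) \pmod{4n}$, so the connection set $\{\pm (2n - 1)\}$ is symmetric and $A(G) A(H)$ is a symmetric $(0,1)$-matrix; equivalently, $((G,g),(H,h))$ is a graphical pair. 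Thus $A(G) A(H) = A(\cay(\mathbb{Z}_{4n}; \{\pm(2n-1)\}))$.

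Finally, since $2n - 1$ is odd, $\gcd(2n - 1,\, 4n) = \gcd(2n - 1,\, 2) = 1$, so $\cay(\mathbb{Z}_{4n}; \{\pm(2n - 1)\})$ is a connected $2$-regular circulant graph, i.e., a cycle of length $4n$. Therefore $G \,{}_{g}\!\ast_{h} H \cong C_{4n}$, exhibiting $C_{4n}$ as the graphical matrix product of a copy of $C_{4n}$ and a perfect matching. No real obstacle is expected; the only point that needs care is verifying that the product has no multiple edges, which reduces to the immediate check $2n + 1 \not\equiv 2n - 1 \pmod{4n}$.
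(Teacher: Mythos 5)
Your construction is correct, and it is a genuinely different argument from the one in the paper. The paper deduces this corollary from \cref{selfMPF}: it asserts that $C_{4n}$ admits a labeling with $A(C_{4n})=J_2\otimes D$, where $D=\mathsf{diag}(A(K_2),\ldots,A(K_2))$ is a perfect matching, and then uses the Kronecker identity $(J_2\otimes D)(A(K_2)\otimes I)=J_2\otimes D$ to multiply $C_{4n}$ by a matching and recover $C_{4n}$ itself. You instead stay entirely inside the circulant framework, in the spirit of \cref{thm 12} and \cref{circ}: with $G=\cay(\mathbb{Z}_{4n};\{\pm1\})$ and $H=\cay(\mathbb{Z}_{4n};\{2n\})$ the product $A(G)A(H)$ is again a circulant, with connection set $\{2n-1,2n+1\}=\{\pm(2n-1)\}$, and your checks (the two residues are distinct, the set is closed under negation, $G\oplus H$ is simple so \cref{lem 3} rules out loops, and $\gcd(2n-1,4n)=1$ forces a single $4n$-cycle) are exactly what is needed for the pair to be graphical and for the product to be isomorphic to $C_{4n}$; since the definition of MPF only requires some labeling of the target graph, this finishes the proof. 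What each approach buys: the paper's route is a one-line consequence of a general Kronecker lemma when the identity $A(X)=J_2\otimes D$ is available, but that identity is delicate here — for a perfect matching $D$ on $2n\ge 4$ vertices the graph with adjacency matrix $J_2\otimes D$ is a disjoint union of $n$ four-cycles, not a single $4n$-cycle, so the identification claimed in the paper's proof only holds for $n=1$ — whereas your explicit Cayley factorization is self-contained, verifies the graphical-pair condition directly, and actually establishes the statement for all $n$; it also generalizes readily to other circulant factorizations by translating matrix products into sums of connection sets.
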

\begin{proof}
    Let $A$ be the adjacency matrix of the graph $K_2$.
    Then define $D\coloneqq \mathsf{diag}(A,\ldots,A)$, where $\mathsf{diag}$ is the diagonal matrix, whose entries of the block matrix $A$.
    Next, it is not hard to see that there is a labeling of $C_{4n}$ such that $ A(C_{4n}) = J_2 \otimes D $. Now it follows from \Cref{selfMPF} that $C_{4n}$ is an MPF graph $C_{4n}$ and a matching.
\end{proof}

\begin{defn}
    The \textit{web graph} $ W_{2n,2} $ is a graph consisting of $ 2 $ copies of the cycle graph $C_{2n}$, with corresponding vertices connected by spokes.
\end{defn}

\begin{figure}[H]
    \centering
\includegraphics[scale=1.1]{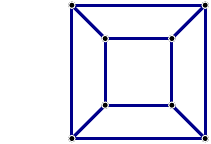}
    \caption{The web graph $ W_{4,2} $}
    \label{web-graph}
\end{figure}

\begin{cor}\label{webgraph}
    Every web graph $ W_{2n,2} $ is an MPF graph.
\end{cor}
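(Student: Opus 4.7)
The plan is to realize $W_{2n,2}$ as the bipartite double cover $\widetilde{X}$ of a suitable simple $3$-regular graph $X$, and then invoke \Cref{Xtilde}. Specifically, I would take $X := \cay(\Z_{2n};\{n-1, n, n+1\})$. For $n \geq 2$ this is a simple $3$-regular Cayley graph: the three connection elements are pairwise distinct in $\Z_{2n}$, none equals $0$, and the set is closed under negation modulo $2n$ since $-(n-1) \equiv n+1$, $-n \equiv n$, and $-(n+1) \equiv n-1$.

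Next, I would construct an explicit isomorphism $\iota : V(\widetilde{X}) \to V(W_{2n,2})$. Label the vertices of $W_{2n,2}$ as $(i,k)$ with $i \in \Z_{2n}$ and $k \in \{0,1\}$, so that the two copies of $C_{2n}$ are $\{(i,0) : i \in \Z_{2n}\}$ and $\{(i,1) : i \in \Z_{2n}\}$ and the spokes are the pairs $(i,0) \sim (i,1)$. Then $W_{2n,2}$ is bipartite with parts $V_0 = \{(i,k) : i + k \equiv 0 \pmod{2}\}$ and $V_1 = \{(i,k) : i + k \equiv 1 \pmod{2}\}$. Set $\iota(1,i) := (i,\, i \bmod 2) \in V_0$, and set $\iota(2,i) := (i+n,\, i \bmod 2)$ when $n$ is odd, or $\iota(2,i) := (i+n,\, 1 - (i \bmod 2))$ when $n$ is even; the case split is precisely what forces $\iota(2,i) \in V_1$ in each case.

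The key verification is that the three neighbours of $(1,i)$ in $\widetilde{X}$, namely $(2, i+n-1), (2, i+n), (2, i+n+1)$ arising from the connection set $\{n-1, n, n+1\}$, are mapped by $\iota$ to exactly the three neighbours of $(i, i \bmod 2)$ in $W_{2n,2}$, that is, the two cycle neighbours $(i-1, i \bmod 2)$ and $(i+1, i \bmod 2)$ together with the spoke neighbour $(i, 1 - (i \bmod 2))$. This is a routine modular-arithmetic check, carried out separately in each parity case. With the isomorphism in hand, \Cref{Xtilde} gives that $\widetilde{X}$, and therefore $W_{2n,2}$, is MPF (transferring the property across the isomorphism via \Cref{anylabel}).

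The main piece of bookkeeping is the parity case split in the definition of $\iota(2,i)$, which is dictated by which twist of the rotation-by-$n$ map serves as the bipartition-swapping involution for the given parity of $n$. Beyond that, the argument reduces to matching the three types of neighbours in $W_{2n,2}$ (two cycle edges and one spoke) with the three connection elements $n-1, n, n+1$ of $X$.
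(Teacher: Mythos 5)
Your construction is correct, and it follows the same overall strategy as the paper: realize $W_{2n,2}$ as the double graph $\widetilde{X}$ of a suitable $3$-regular graph $X$ and then invoke \Cref{Xtilde} (with \Cref{anylabel} to transfer across the isomorphism). The only real difference is the choice of base graph: the paper takes $R=\cay(\Z_{2n};\{\pm 1,n\})$ (the $2n$-cycle plus the antipodal matching), while you take $X=\cay(\Z_{2n};\{n-1,n,n+1\})$. For $n$ even these are isomorphic (multiply by the unit $n+1$), so there your argument is just a relabelled version of the paper's. For $n$ odd, however, they are genuinely different graphs, and your choice is in fact the more robust one: when $n$ is odd the paper's $R$ is bipartite (both the cycle edges and the chords $i\sim i+n$ join vertices of opposite parity), so its double $\widetilde{R}$ splits into two components, and the edge $(1,0)\sim(2,n)$ that the paper uses as a spoke actually joins two vertices of the same cycle $C$ (which contains $(2,j)$ for $j$ odd); your connection set $\{n-1,n,n+1\}$ avoids this parity obstruction, and your explicit map $\iota$ with its case split on the parity of $n$ checks out: in both cases the three neighbours $(2,i+n-1),(2,i+n),(2,i+n+1)$ of $(1,i)$ land exactly on the two cycle neighbours and the spoke neighbour of $\iota(1,i)$, $\iota$ is a bijection onto $V_0\cup V_1$, and a count of edges ($6n$ on each side) upgrades this to an isomorphism. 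So your proof buys uniform correctness over both parities at the cost of the case split in $\iota$, whereas the paper's choice of base graph gives a cleaner picture (two lifted cycles plus lifted chords) but, as written, its identification of the spokes only goes through when $n$ is even.
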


\begin{proof}
    Let $ R $ be a simple graph on $ 
2n $ vertices with $ \rho\colon V(R) \to \{0,1,\ldots,2n-1\} $ as its labeling, and $  E(R) = E(R_1)\sqcup E(R_2) $, where $ 
E(R_1) = \{i~\blueedge~i+1 \pmod{2n}\mid i \in\{0,1,\ldots,2n-1\}\} $ and $ E(R_2) = \{i~\blueedge~i+n \pmod{2n} \mid i \in\{0,1,\ldots,2n-1\}\} $. It is clear that $ R $ is a $3$-regular graph. Now we show that $ \widetilde{R}$ is isomorphic to $W_{2n,2} $. It is also clear that $ \widetilde{R} $ is 3-regular. By \cref{Gtilde}, $ V(\widetilde{R}) = \{(i,j) \mid j = \rho(v), v \in V(R), j \in\{0,1,\ldots,2n-1\}, i\in\{1,2\}\} $. Now without loss of generality starting from vertex $ (1,0) $ and $ (1,n) $ in $ \widetilde{R} $ and using the definition of $ E(R_1) $, we have the following two cycles $ C_{2n} $ in $ \widetilde{R} $:
 \begin{align*}
C\colon~(1,0)~\blueedge~(2,1)~\blueedge~(1,2)~\blueedge\cdots\blueedge~(2,n-1)~\blueedge~(1,n)~\blueedge\cdots\blueedge~(2,2n-1)~\blueedge~(1,0),\\\\
C'\colon~(2,0)~\blueedge~(1,1)~\blueedge~(2,1)~\blueedge\cdots\blueedge~(1,n-1)~\blueedge~(2,n)~\blueedge\cdots\blueedge~(1,2n-1)~\blueedge~(2,0).
 \end{align*}
These two cycles are disjoint and contain all vertices in $ \widetilde{R} $. Now by rotating $ C' $, we have
 \begin{align*}
C\colon~(1,0)~\blueedge(2,1)~\blueedge~(1,2)~\blueedge\cdots\blueedge~(2,n-1)~\blueedge~(1,n)~\blueedge\cdots\blueedge~(2,2n-1)~\blueedge~(1,0),\\\\
C'\colon~(2,n)~\blueedge~(1,n+1)~\blueedge~(2,n+2)~\blueedge\cdots\blueedge~(1,2n-1)~\blueedge~(2,0)~\blueedge\cdots\blueedge~(1,n-1)~\blueedge~(2,n).
 \end{align*}
By using $ E(R_2) $, we know that there is an edge between $ (1,0) $ and $ (2,n) $ of $ C $ and $ C' $, similarly one can see that there is a perfect matching between $ C $ and $ C' $ in $ \widetilde{R} $, which is isomorphic to $ W_{2n,2} $. 
\end{proof}

\begin{defn}
The circulant graph $ Ci_{2n+1}(n-1,n) $ is a graph on $ 2n+1 $ vertices where $ V(Ci_{2n+1}) = \{0,1,\ldots,2n\} $ and $ E(Ci_{2n+1})=\{i \sim i+n-1 \pmod{2n+1} $ and $ 
i \sim i+n \pmod{2n+1} $ for each $ 
i \in V(Ci_{2n+1}) $\}.
\end{defn}

\begin{prop}\label{circ}
    The circulant graph $ Ci_{2n+1}(n-1,n)  $ is MPF for $ n\geq 2 $.
\end{prop}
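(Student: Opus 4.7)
The plan is to realize $Ci_{2n+1}(n-1,n)$ as a matrix product of two Cayley graphs on $\Z_{2n+1}$, exploiting the fact that the connection set of $Ci_{2n+1}(n-1,n)$, namely $\{\pm(n-1), \pm n\}$, equals $\{n-1,n,n+1,n+2\} \pmod{2n+1}$: four consecutive integers. I would try to express this set as a sumset $S+T$ of two symmetric subsets of $\Z_{2n+1}$.

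Specifically, I would set $G=\cay(\Z_{2n+1}; \{\pm n\})$ and $H=\cay(\Z_{2n+1}; \{\pm 1\})$, and equip each with the natural labeling by $\{0,1,\ldots,2n\}$. Since $\gcd(n,2n+1)=1$, the graph $G$ is a $(2n+1)$-cycle, and $H=C_{2n+1}$. Both are simple. The key computation is to check that, as a multiset,
\[
\{\pm n\} + \{\pm 1\} = \{n+1,\, n-1,\, -n+1,\, -n-1\} \equiv \{n-1,\, n,\, n+1,\, n+2\} \pmod{2n+1},
\]
with all four elements distinct and nonzero (for $n\geq 2$). Since $A(G)$ and $A(H)$ are circulant with entries $A(G)_{i,j}=\mathbbm{1}[j-i\in\{\pm n\}]$ and $A(H)_{i,j}=\mathbbm{1}[j-i\in\{\pm 1\}]$, it follows that $(A(G)A(H))_{i,j}$ depends only on $j-i$, and equals the multiplicity of $j-i$ in the sumset above. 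Thus $A(G)A(H)$ is precisely the adjacency matrix of $Ci_{2n+1}(n-1,n)$.

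To confirm this yields a valid matrix product factorization, I would verify: (i) $A(G)A(H)$ is symmetric, which follows immediately from the fact that two circulant matrices commute and both $A(G), A(H)$ are symmetric; (ii) $A(G)A(H)$ is a $(0,1)$-matrix, which follows from the distinctness of the four elements of the sumset; (iii) there are no loops, i.e., $0\notin\{\pm n\}+\{\pm 1\}$, which holds for $n\geq 2$ since $\{\pm n\}\cap\{\mp 1\}=\emptyset$; and (iv) $G\oplus H$ is simple, i.e., $\{\pm n\}\cap\{\pm 1\}=\emptyset$, again using $n\geq 2$.

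There is no real obstacle here once the correct decomposition of the connection set is spotted; the main conceptual step is recognizing that $\{n-1,n,n+1,n+2\}=\{\pm n\}+\{\pm 1\}$ modulo $2n+1$, after which everything reduces to the standard sumset interpretation of products of circulant adjacency matrices. Alternatively, if one prefers the viewpoint of Lemma~\ref{lem 3}, the same construction can be checked by verifying the diamond condition on $G\oplus H$, but the direct sumset computation is cleaner.
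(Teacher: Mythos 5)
Your factorization is exactly the one the paper uses: the two factors are the Hamiltonian cycles $\cay(\Z_{2n+1};\{\pm 1\})$ and $\cay(\Z_{2n+1};\{\pm n\})$ with the natural labelings, so the underlying decomposition is identical. What differs is the verification. The paper works combinatorially through \cref{lem 3}: it checks that $C\oplus C'$ is simple and then verifies the diamond condition, including a case analysis showing each alternating path lies in exactly one coloured diamond, and reads off the edge sets $E_1=\{i\sim i+n-1\}$ and $E_2=\{i\sim i+n+1\}$. You instead verify the product algebraically: since both adjacency matrices are circulant, $(A(G)A(H))_{i,j}$ is the multiplicity of $j-i$ in the multiset sumset $\{\pm n\}+\{\pm 1\}\equiv\{n-1,n,n+1,n+2\}\pmod{2n+1}$, and for $n\geq 2$ these four residues are distinct and nonzero, so the product is a symmetric $(0,1)$-matrix with zero diagonal equal to $A\bigl(Ci_{2n+1}(n-1,n)\bigr)$; simplicity of $G\oplus H$ follows from $\{\pm n\}\cap\{\pm 1\}=\emptyset$. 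Your route is shorter and avoids the uniqueness-of-diamonds case analysis, because multiplicity one in the sumset automatically rules out entries exceeding $1$; the paper's route has the merit of staying within the diamond-condition framework used throughout the paper and of exhibiting the coloured diamonds explicitly. Both arguments are complete and correct, and your checks (symmetry via commuting circulants, $(0,1)$-ness via distinctness, looplessness and edge-disjointness for $n\geq 2$) cover all the hypotheses needed for the pair to be graphical.
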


\begin{proof}
    Let $ C_{2n+1} $ and $ C'_{2n+1} $ be cycles with $ g\colon V(C_{2n+1}) \to \{0,1,\ldots,2n\} $ and $ h\colon V(C_{2n+1}) \to \{0,1,\ldots,2n\}  $, respectively, such that $ E(C_{2n+1}) = \{i~\blueedge~(i+1) \pmod{2n+1}\mid i\in\{0,1,\ldots,2n\}\} $ and $ E(C'_{2n+1}) = \{i~\rededge~(i+n) \pmod{2n+1}\mid i\in\{0,1,\ldots,2n\}\} $. We claim that these two cycles are disjoint. Suppose otherwise, there is an $ e \in E(C_{2n+1}) \cap E(C'_{2n+1}) $. Without loss of generality, assume one vertex on $ e $ is $ i $, then $ i~\blueedge~(i+1) \pmod{2n+1} $ and $ i~\rededge~(i+n) \pmod{2n+1} $, which implies that $ n-1 \equiv 0 \pmod{2n+1} $, a contradiction.

    \noindent Now we show that the diamond condition in \cref{lem 3} holds. Without loss of generality assume the edges of $ C $ are coloured by blue and the edges of $ C' $ are coloured by red in $ C\oplus C' $. Let $ i \in V(C\oplus C') $, then we have the following two diamonds in $ C\oplus C' $:
    \begin{align*}
        i~\blueedge~(i+1)~\rededge~(i+1+n)~\blueedge~(i+n)~\rededge~i,\\
        i~\blueedge~(i-1)~\rededge~(i-1-n)~\blueedge~(i-n)~\rededge~i.
    \end{align*}
This implies that the graph $ C_g\ast_hC'  $ contains the following two set of edges $ E_1 = \{i~\blueedge~(i+n-1) \pmod{2n+1}\mid i\in\{0,1,\ldots,2n\}\} $ and $ E_2 = \{i~\blueedge~(i+n+1) \pmod{2n+1}\mid i\in\{0,1,\ldots,2n\}\} $.
Now suppose that there are two vertices $ i,j \in V(C\oplus C')  $ such that they are on two different diamonds as follows:
\begin{align*}
    i~\blueedge~\ell_1~\rededge~j~\blueedge~\ell_1'~\rededge~i,\\
    i~\blueedge~\ell_2~\rededge~j~\blueedge~\ell_2'~\rededge~i\\
\end{align*}
for some $ \ell_1,\ell_1',\ell_2,\ell_2' \in V(C\oplus C') $. Without loss of generality assume $ \ell_1 = (i+1) $, then either $ j = (i+1+n) $ or $ j = (i+1-n) $. If $ 
j = (i+1+n) $, then $ \ell_1' = (i+n) $ and $ \ell_2' = (i+2+n) $. This implies that either $ (i+2+n)-n \equiv i \pmod{2n+1} $ or $ (i+2+n)+n \equiv i \pmod{2n+1} $, a contradiction. 
A similar argument works for $ j = (i+1-n) $. Hence the diamond condition in \cref{lem 3} holds and
\begin{align*}
    E(C_g\ast_hC') = E_1\sqcup E_2,
\end{align*}   
which implies that $ C_g\ast_hC' = Ci_{2n+1}(n-1,n) $, as desired. 
\end{proof}

\begin{exa}
Let $ C_{7} $ and $ C'_{7} $ be cycles with $ g\colon V(C_{7}) \to \{0,1,\ldots,6\} $ and $ h\colon V(C_{7}) \to \{0,1,\ldots,6\}  $, respectively, such that $ E(C_{7}) = \{i~\blueedge~(i+1) \pmod{7}\mid i\in\{0,1,\ldots,6\}\} $ and $ E(C'_{7}) = \{i~\rededge~(i+3) \pmod{7}\mid i\in\{0,1,\ldots,2n\}\} $. Then we can see that $ A(C_7)A(C_7') = A(Ci(2,3)) $:

\begin{align*}
    A(C_7)A(C_7') = \begin{bmatrix} 
	0 & 1 & 0 & 0 & 0 & 0 & 1 \\
	1 & 0 & 1 & 0 & 0 & 0 & 0 \\
    0 & 1 & 0 & 1 & 0 & 0 & 0 \\
	0 & 0 & 1 & 0 & 1 & 0 & 0 \\
 0 & 0 & 0 & 1 & 0 & 1 & 0  \\
 0 & 0 & 0 & 0 & 1 & 0 & 1 \\
  1 & 0 & 0 & 0 & 0 & 1 & 0 \\
	\end{bmatrix}
 \begin{bmatrix} 
	0 & 0 & 0 & 1 & 1 & 0 & 0 \\
	0 & 0 & 0 & 0 & 1 & 1 & 0 \\
    0 & 0 & 0 & 0 & 0 & 1 & 1 \\
	1 & 0 & 0 & 0 & 0 & 0 & 1 \\
 1 & 1 & 0 & 0 & 0 & 0 & 0  \\
 0 & 1 & 1 & 0 & 0 & 0 & 0 \\
  0 & 0 & 1 & 1 & 0 & 0 & 0 \\
	\end{bmatrix} = 
 \begin{bmatrix} 
	0 & 0 & 1 & 1 & 1 & 1 & 0 \\
	0 & 0 & 0 & 1 & 1 & 1 & 1 \\
    1 & 0 & 0 & 0 & 1 & 1 & 1 \\
	1 & 1 & 0 & 0 & 0 & 1 & 1 \\
 1 & 1 & 1 & 0 & 0 & 0 & 1  \\
 1 & 1 & 1 & 1 & 0 & 0 & 0 \\
  0 & 1 & 1 & 1 & 1 & 0 & 0 \\
	\end{bmatrix}.
\end{align*}
\end{exa}

\section{Conclusion}
\begin{table}[H]
  \centering{
  \resizebox{\textwidth}{!}{
    \begin{tabular}{p{12cm}p{1.45cm}ccl} \hline
    Graph class &  MPF &  Ref. \\ \hline
    $(4n+1)$-vertex complete graph, $K_{4n+1}$ &  \,\,\,\,\cmark & \Cref{thm:completegraphs} \\
    $K_{m}$, where $m\neq 4n+1$ & \,\,\,\,\xmark & \Cref{thm:completegraphs} \\
    $d$-dimensional hypercube, $Q_d$ &  \,\,\,\,\cmark & \Cref{thm:cube}\\
    $n$-vertex path, $P_{n}$ &  \,\,\,\,\xmark & \Cref{prop:path}  \\
    $2n$-vertex cycle, $C_{2n}$ &  \,\,\,\,\cmark & \Cref{cycle} \\
    $C_{2n+1}$ &  \,\,\,\,\xmark & \Cref{cycle} \\
    Complete bipartite graph, $K_{2m,2n}$ &  \,\,\,\,\cmark & \Cref{thm:completebipartitegraphs} \\
     $K_{2m+1,n}$ &  \,\,\,\,\xmark & \Cref{thm:completebipartitegraphs} \\
    $m \times n$- grid graph, $G_{m,n}$ $(m, n \ge 2)$ &  \,\,\,\,{\Large\textbf{?}} & {\Large\textbf{?}}  \\
    $m \times n$- torus graph, $C_n\Box C_m$ $(m, n \ge 2)$ &  \,\,\,\,{\Large\textbf{?}} & {\Large\textbf{?}}  \\
    Erd\H{o}s–R\'{e}nyi random graph $G(n, p)$  &  \,\,\,\,{\Large\textbf{?}} & {\Large\textbf{?}}  \\
    $(2n+1)$-vertex tree  &  \,\,\,\,{\Large\textbf{?}} & {\Large\textbf{?}}  \\
    Web graph $W_{2n,2}$ & \,\,\,\,\cmark & 
    \Cref{webgraph}  \\
    Web graph $W_{2n+1,2}$ & \,\,\,\,\xmark & 
    \Cref{C_odd}  \\
    Wheel graph $W_{1,n}$ & \,\,\,\,\xmark & 
    \Cref{wheel-graph} \\
     Generalized wheel graph $W_{2n,2m}$ & \,\,\,\,\cmark & 
    \cite[Remark 2.2]{Bhat} \\
    Friendship graph $F_{n}$ & \,\,\,\,\xmark & 
    \Cref{frined} \\
    Circulant graph $ Ci_{2n+1}(n-1,n)  $ &  \,\,\,\,\cmark & \Cref{circ}  \\
    \hline
    \end{tabular} 
    
  } 
  }
  \caption{Summary of our results.}
  \label{summary-table}
\end{table}

We close the paper with the following questions:
\begin{enumerate}
   
   \item Let $G$ and $H$ be two MPF graphs. Under what conditions is the Cartesian product $G \Box H$ also an MPF graph?
   \item What can we say about strongly regular graphs? Are they MPF?
\end{enumerate}

\bibliographystyle{plainurlnat}
\bibliography{MPF.bib}
\nocite{*}
\newpage
    
\end{document}